\documentclass{article}

\linespread{1.05}

\usepackage{amsthm} 
\usepackage{color}

\usepackage{graphicx,epsfig}
\usepackage{epstopdf}
\usepackage{amsmath, amssymb, latexsym, euscript,amscd}
\usepackage{url}
\usepackage[all]{xy}
\usepackage{psfrag}
\usepackage{mathrsfs}

\setlength{\topmargin}{-1.cm}
\setlength{\headsep}{1.6cm}
\setlength{\evensidemargin}{.7cm} 
\setlength{\oddsidemargin}{.7cm}
\setlength{\textheight}{21.cm}
\setlength{\textwidth}{15.2cm}

\definecolor{darkgreen}{rgb}{0.0, 0.5, 0.0}

\newtheorem{theorem}{Theorem}[section]
\newtheorem{lemma}[theorem]{Lemma}
\newtheorem{corollary}[theorem]{Corollary} 
\newtheorem{definition}[theorem]{Definition} 
\newtheorem{proposition}[theorem]{Proposition}
\newtheorem{remark}[theorem]{Remark}

\makeatletter
\newtheorem*{rep@theorem}{\rep@title}
\newcommand{\newreptheorem}[2]{%
\newenvironment{rep#1}[1]{%
 \def\rep@title{#2 \ref{##1}}%
 \begin{rep@theorem}}%
 {\end{rep@theorem}}}
\makeatother

\newreptheorem{theorem}{Theorem}
\newreptheorem{lemma}{Lemma}

\def\smallskip{\vspace{.15cm}}
\def\medskip{\vspace{.3cm}}
\def\text{\mbox}
\def\rh2{{\mathbb R}{\mathbb H}^2}
\def\ch2{{\mathbb C}{\mathbb H}^2}
\def\RR{{\mathbb R}}
\def\KK{{\mathbb K}}
\def\CC{{\mathbb C}}
\def\EE{{\mathbb E}}

\def\HH{{\mathbb H}}
\def \XX{{\mathsf X}}

\def\RP{{\mathbb{RP}}}

\def\SL{\operatorname{SL}}
\def\PSL{\operatorname{PSL}}
\def\SO{\operatorname{SO}}

\def\H2R{{\mathbb H}^2\times {\mathbb R}}
\def\eps{\varepsilon}
\def\AdS{\operatorname{AdS}}
\def\dS{\operatorname{dS}}
\def\PSO{\operatorname{PSO}}
\def\PO{\operatorname{PO}}
\def\PU{\operatorname{PU}}
\def\Isom{\operatorname{Isom}}

\def\GL{\mathrm{GL}}
\def\eps{\varepsilon}

\newcommand{\OO}{\operatorname{O}}
\newcommand{\PP}{\operatorname{P}}
\newcommand{\pfbeta}{{\boldsymbol \beta}}
\newcommand{\frakp}{\mathfrak{p}}
\newcommand{\frakq}{\mathfrak{q}}
\newcommand{\fraka}{\mathfrak{a}}
\newcommand{\frakg}{\mathfrak{g}}
\newcommand{\frakh}{\mathfrak{h}}
\newcommand{\frakk}{\mathfrak{k}}

\newcommand{\frakb}{\mathfrak{b}}

\newcommand{\frakl}{\mathfrak{l}}
\newcommand{\frakgl}{\mathfrak{gl}}
\newcommand{\frakn}{\mathfrak{n}}
\newcommand{\frakso}{\mathfrak{so}}
\newcommand{\flag}{\mathcal F}
\newcommand{\PGL}{\operatorname{PGL}}
\newcommand{\Ad}{\mathrm{Ad}}
\newcommand{\ad}{\mathrm{ad}}

\newcommand{\Grp}{\mathsf{Grp}_0}

\newcommand{\Sp}{\operatorname{Sp}}
\newcommand{\charpoly}{\operatorname{char}}
\newcommand{\charpolys}{\operatorname{Char}}

\newcommand{\bv}{\left(\begin{array}{c}}
\newcommand{\ev}{\end{array}\right)}
\newcommand{\bpmat}{\begin{pmatrix}} 
\newcommand{\epmat}{\end{pmatrix}}
\newcommand{\bmat}{\begin{matrix}} 
\newcommand{\emat}{\end{matrix}}

\title{Limits of Geometries}
\author{Daryl Cooper, Jeffrey Danciger, and Anna Wienhard}

\begin{document}
\maketitle

\begin{abstract}
A geometric transition is a continuous path of geometric structures that changes type, meaning that the model geometry, i.e. the homogeneous space on which the structures are modeled, abruptly changes.
In order to rigorously study transitions, one must define a notion of geometric limit at the level of homogeneous spaces, describing the basic process by which one homogeneous geometry may transform into another.
We develop a general framework to describe transitions in the context that both geometries involved are represented as sub-geometries of a larger ambient geometry. Specializing to the setting of real projective geometry, we classify the geometric limits of any sub-geometry whose structure group is a symmetric subgroup of the projective general linear group.
As an application, we classify all limits of three-dimensional hyperbolic geometry inside of projective geometry, finding Euclidean, Nil, and Sol geometry among the limits. We prove, however, that the other Thurston geometries, in particular $\HH^2 \times \RR$ and $\widetilde{\SL_2\RR}$, do not embed in any limit of hyperbolic geometry in this sense.
\end{abstract}

\section{Introduction}
Following Felix Klein's Erlangen Program, a geometry is given by a pair $(Y,H)$ of a Lie group $H$ acting transitively by diffeomorphisms on a manifold $Y$. Given a manifold of the same dimension as $Y$, a geometric structure modeled on $(Y,H)$ is a system of local coordinates in $Y$ with transition maps in~$H$. The study of deformation spaces of geometric structures on manifolds is a very rich mathematical subject, with a long history going back to Klein and Ehresmann, and more recently Thurston. In this article we are concerned with \emph{geometric transition}, an idea that was recently promoted by Kerckhoff, and studied by Danciger in his thesis \cite{danciger1, danciger2}. 
A geometric transition is a continuous path of geometric structures for which the model geometry $(Y,H)$ abruptly changes to a different geometry $(Y',H')$.  
The process involves the limiting of the two different geometric structures to a common transitional geometry which, in some sense, interpolates the geometric features of the two geometries.
For this to make sense, one must define a notion of geometric limit at the level of homogeneous spaces which describes the basic process by which one homogeneous geometry may transform or limit to another. 
In this article we develop a general framework to describe such geometric transitions, focusing on the special situation 
in which both geometries involved are sub-geometries of a larger ambient geometry $(X,G)$. 
Working in this framework, we then study transitions between certain sub-geometries of real projective geometry, giving an explicit classification in some cases.

The best-known examples of geometric transition arise in the context of Thurston's geometrization program. For example, the transition between hyperbolic and spherical geometry, passing through Euclidean geometry, was studied by Hodgson \cite{Hodgson} and Porti \cite{Porti-98} and plays an important role in the proof of the orbifold theorem by Cooper, Hodgson, Kerckhoff \cite{Cooper_Hodgson_Kerckhoff}, and  Boileau, Leeb, Porti~\cite{Boileau_Leeb_Porti_orbi}. More recently, a transition going from hyperbolic geometry to its Lorentzian analogue, anti de Sitter (AdS) geometry, was introduced by Danciger \cite{danciger1} and studied in the context of cone-manifold structures on Seifert-fibered three-manifolds. The hyperbolic-AdS transition plays an important role in very recent work by Danciger, Maloni, and Schlenker~\cite{Danciger_Maloni_Schlenker} on the classical subject of combinatorics of polyhedra in three-space, which characterizes the combinatorics of polyhedra inscribed in the one-sheeted hyperboloid, generalizing Rivin's famous characterization of polyhedra inscribed in the sphere.
The transition between constant curvature Lorentzian geometries very recently found applications in the setting of affine geometry.  One of the most striking features differentiating affine geometry from Euclidean geometry is the existence of properly discontinuous actions by non-abelian free groups. In three-dimensions, such proper actions of free groups preserve a flat Lorentzian metric and their quotients are called Margulis space-times. In~\cite{Danciger_Gueritaud_Kassel_topology}, Danciger, Gu\'eritaud, and Kassel study Margulis space-times as limits of collapsing complete AdS three-manifolds, giving related characterizations of the geometry and topology of both types of geometric structures. In particular, they give a proof of the tameness conjecture for Margulis space-times \cite{Danciger_Gueritaud_Kassel_topology} (also proved using a different approach by Choi and Goldman \cite{Choi_Goldman}). Each of the results mentioned here involves the construction of a geometric transition in some specialized geometric setting. This paper seeks to broaden the scope of transitional geometry by initiating a classification program for geometric transitions within a very general framework. We expect our results to be useful in a wide array of problems, for example the study of boundaries or compactifications of deformation spaces of geometric structures, and the construction of interesting proper affine actions in higher dimensions.

Let us now illustrate the general context in which the construction of a geometric transition is desirable. 
Consider a sequence $\mathscr Y_n$ of $(Y,H)$ structures on a manifold~$M$, and suppose that as $n \to \infty$, the structures $\mathscr Y_n$ fail to converge, meaning that the charts fail to converge as local diffeomorphisms, even after adjusting by diffeomorphisms of $M$ and coordinate changes in $H$. Of central interest here is the case that the $\mathscr Y_n$ \emph{collapse}: The charts converge to local submersions onto a lower dimensional sub-manifold of $Y$ and the transition maps converge into the subgroup of $H$ that preserves this sub-manifold. 
 Next suppose that $(Y,H)$ is a sub-geometry of $(X,G)$; this means that $Y$ is an open sub-manifold of $X$ and $H$ is a closed subgroup of $G$. 
The sequence $\mathscr Y_n$ of collapsing $(Y,H)$ structures need not collapse as $(X,G)$ structures, because the larger group $G$ of coordinate changes could be used to prevent collapse. In certain cases, one may find a sequence $(c_n) \subset G$, so that the conjugate structures $c_n \mathscr Y_n$ converge to a (non-collapsed) $(X,G)$ structure~$\mathscr Y_\infty$. This limiting structure $\mathscr Y_\infty$ is modeled on a new sub-geometry $(Z,L)$ which is, in a sense to be defined presently, a geometric limit of $(Y,H)$.

Consider two sub-geometries $(Y,H)$ and $(Z,L)$ of $(X,G)$. First, at the level of structure groups, we say $L$ is a \emph{limit} of $H$, if there exists a sequence $(c_n)$ in $G$ so that the conjugates $c_n H c_n^{-1}$ converge to $L$ in the \emph{Chabauty topology} \cite{chabauty} on closed subgroups (i.e. $c_n H c_n^{-1}$ converges to $L$ in the Hausdorff topology in every compact neighborhood of $G$).
If in addition there exists $z \in Z \subset X$ so that $z \in c_n Y$ for all $n$ sufficiently large, then we say $(X,G)$ is a \emph{geometric limit} of $(Y,H)$ as sub-geometries of $(X,G)$. See Section~\ref{sec:limits-of-geometries}.
The description of limit groups and limit geometries in general is a difficult problem. We give a complete classification in certain special cases. 

\subsection*{Symmetric subgroups}
Let $G$ be a semi-simple Lie group of noncompact type with finite center. 
A subgroup $H \subset G$ is called \emph{symmetric} if $H = G^\sigma$ is the fixed point set of an involution $\sigma: G \to G$, or more generally $G^\sigma_0 \subset H \subset G^\sigma$, where $G^\sigma_0$ denotes the identity component of $G^\sigma$.
The coset space $G/H$ is called an affine symmetric space. Affine symmetric spaces have a rich structure theory, generalizing the structure theory of Riemannian symmetric spaces.  In particular there is a Cartan involution $\theta: G \to G$ which commutes with $\sigma$. Let $K = G^{\theta}$ and $\frakg = \frakk \oplus \frakp$ the corresponding Cartan decomposition. The involution $\sigma$ analogously defines a decomposition of $\frakg = \frakh \oplus \frakq$ into $(\pm 1)$--eigenspaces. 
An important tool in determining the limits $L$ of symmetric subgroups $H$ is the following well-known factorization result: Let $\frakb$
be a maximal abelian subalgebra of $\frakp\cap \frakq$. Then any $g \in G$ can be written as $g = kbh$ with 
$k \in K$, $b\in B = \operatorname{Exp}(\frakb)$, and $h \in H$. Moreover $b$ is unique up to conjugation by the Weyl group $W_{H \cap K} := N_{H\cap K} ({\frakb})/ Z_{H\cap K}( {\frakb})$. 
Using this factorization theorem we can characterize all limits of symmetric subgroups as follows (see Section~\ref{sec:symmetric-subgroups} for a more precise version).
\begin{theorem}\label{thm:symmetric-limits}
Let $H$ be a symmetric subgroup of a semi-simple Lie group $G$ with finite center. 
Then any limit $L'$ of $H$ in $G$ is the limit under conjugacy by a one parameter subgroup. More precisely, there exists an $X \in \mathfrak{b}$ such that the limit $L = \lim_{t \to \infty} \exp(tX) H \exp(-tX)$ is conjugate to~$L'$. Furthermore, $$L = Z_H(X) \ltimes N_+(X),$$ where $Z_H(X)$ is the centralizer in $H$ of $X$, and $N_+(X)$ is the connected nilpotent subgroup
$$ N_+(X) := \{g \in G : \lim_{t\to \infty} \exp(tX)^{-1} g \exp(tX) = 1 \}.$$
\end{theorem}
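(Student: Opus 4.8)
The plan is to first reduce an arbitrary limit to one coming from the abelian subgroup $B$, and then to compute the limit explicitly using the restricted root space decomposition of $\frakg$ with respect to $\frakb$. Suppose $c_n H c_n^{-1} \to L'$ in the Chabauty topology. Writing each $c_n = k_n b_n h_n$ via the factorization theorem and using $h_n H h_n^{-1} = H$, one gets $c_n H c_n^{-1} = k_n (b_n H b_n^{-1}) k_n^{-1}$. Since $G$ has finite center, $K$ is compact, so after passing to a subsequence $k_n \to k \in K$; by continuity of conjugation on the space of closed subgroups, $b_n H b_n^{-1} = k_n^{-1}(c_n H c_n^{-1})k_n \to k^{-1} L' k$. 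Thus every limit is, up to conjugacy by $K$, a limit of the form $\lim_n \exp(X_n) H \exp(-X_n)$ with $X_n \in \frakb$, and it remains to analyze these.

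The heart of the argument is the restricted root space decomposition $\frakg = \frakg_0 \oplus \bigoplus_{\lambda} \frakg_\lambda$, where $\frakg_\lambda = \{Z : [Y,Z] = \lambda(Y)Z \ \forall Y \in \frakb\}$ and $\frakg_0$ is the centralizer of $\frakb$; since $\frakb \subset \frakp$, the operators $\ad(Y)$ are simultaneously diagonalizable with real eigenvalues. The crucial point is that $\frakb \subset \frakq$ forces $\sigma(\frakg_\lambda) = \frakg_{-\lambda}$, so each of $\frakg_0$ and $\frakg_\lambda \oplus \frakg_{-\lambda}$ is $\sigma$-invariant, and $\frakh$ (the $+1$-eigenspace of $\sigma$) decomposes as $\frakh = (\frakh \cap \frakg_0) \oplus \bigoplus_{\lambda} \frakh_\lambda$ with $\frakh_\lambda = \{Z + \sigma Z : Z \in \frakg_\lambda\}$, the sum taken over one root from each pair $\pm\lambda$. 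Because $\Ad(\exp X_n)$ acts diagonally on this decomposition (by $e^{\lambda(X_n)}$ on $\frakg_\lambda$), I would compute the limit of $\Ad(\exp X_n)\frakh$ in the Grassmannian block by block: on $\frakh \cap \frakg_0$ it is the identity, while on each $\frakh_\lambda$ the image is the graph $\{(e^{\lambda(X_n)}Z,\, e^{-\lambda(X_n)}\sigma Z)\}$, whose limit is $\frakg_\lambda$, $\frakg_{-\lambda}$, or a finite ``tilted'' graph according as $\lambda(X_n) \to +\infty$, $-\infty$, or a finite limit. In particular the limit subalgebra has the same dimension as $\frakh$, and a routine check identifies the one-parameter limit for a single $X \in \frakb$ as $\frakl_X = (\frakh \cap \frakg_0(X)) \oplus \frakn_+(X)$, the Lie algebra of $Z_H(X) \ltimes N_+(X)$.

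The subtle step, and the one I expect to be the main obstacle, is reducing a general sequence $X_n$ to a single one-parameter subgroup, because different roots may send $\lambda(X_n)$ to infinity at different rates, so the naive direction $X_n/\|X_n\|$ need not separate the blocks correctly. To handle this I would record, for each positive root, the value $s_\lambda = \lim_n \lambda(X_n) \in [-\infty, \infty]$ and let $P_+, P_-, P_0$ be the roots with $s_\lambda$ equal to $+\infty$, $-\infty$, or finite. A linear-programming (Farkas) argument, using that the tuple $(\lambda(X_n))_\lambda$ lies in the linear subspace $\{(\lambda(X))_\lambda : X \in \frakb\}$, produces a single $X \in \frakb$ with $\lambda(X) = 0$ on $P_0$ and the correct strict signs on $P_\pm$; consistency of the finite values $s_\lambda$ with every linear relation among the roots of $P_0$ then yields $X' \in \frakb$ with $\lambda(X') = s_\lambda$ on $P_0$. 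One checks that $\Ad(\exp X')$ fixes $\frakh \cap \frakg_0$ and the blocks $\frakg_{\pm\lambda}$ for $\lambda \in P_\pm$ while tilting each $P_0$-block by exactly $s_\lambda$, so the sequence limit equals $\Ad(\exp X')\frakl_X$ and is therefore conjugate to the one-parameter limit $\frakl_X$.

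Finally, to pass from subalgebras to the group statement, I would verify directly that $Z_H(X) \subset L$ (each such element lies in $H$ and commutes with $\exp(tX)$, hence is its own conjugate) and that $N_+(X) \subset L$: for $Z \in \frakg_\lambda$ with $\lambda(X)>0$, the $H$-elements $\exp(e^{-t\lambda(X)}(Z+\sigma Z))$ satisfy $\exp(tX)\exp(e^{-t\lambda(X)}(Z+\sigma Z))\exp(-tX) = \exp(Z + e^{-2t\lambda(X)}\sigma Z) \to \exp Z$, using $\sigma(\frakg_\lambda)=\frakg_{-\lambda}$. Since $Z_G(X)$ normalizes $N_+(X)$ and $Z_H(X) \cap N_+(X) = 1$, the product is the semidirect product $Z_H(X) \ltimes N_+(X)$, whose Lie algebra is the $\frakl_X$ above and whose identity component is the connected group determined by $\frakl_X$; matching the component group (supplied by $Z_H(X)$, as $N_+(X)$ is connected) against the Chabauty limit gives $L = Z_H(X) \ltimes N_+(X)$ exactly. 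The additional care here is the interplay between Chabauty convergence of the groups and Grassmannian convergence of their Lie algebras for the possibly disconnected $H$, which is why I would treat the identity component through the subalgebra limit and the components through $Z_H(X)$ separately.
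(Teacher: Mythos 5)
Your reduction to conjugation by $B$ (via the $KBH$ factorization and compactness of $K$) is exactly the paper's first step, your decomposition $\frakh = (\frakh\cap\frakg_0)\oplus\bigoplus_{\lambda}\{Z+\sigma Z : Z\in\frakg_\lambda\}$ is correct, and your Farkas-type reduction of an arbitrary sequence $(X_n)\subset\frakb$ to a single one-parameter subgroup is sound and is a genuine alternative to the paper's device (the paper instead renormalizes $(b_n)$ by a bounded sequence in $B$ so that each ratio of diagonal $\Ad$-entries is either constantly $1$ or tends to infinity, then takes $X=\log b_m$ for a single $m$). The direct verifications $Z_H(X)\subset L$ and $N_+(X)\subset L$, and the semidirect-product structure, are also fine.

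The genuine gap is the reverse containment $L\subseteq Z_H(X)\ltimes N_+(X)$, which your proposal never proves. Everything you extract from the Grassmannian convergence $\Ad(\exp X_n)\frakh\to\frakl_X$ bounds $L$ only from \emph{below}: a Lie-algebra limit $\frakl'$ gives $\exp(\frakl')\subset L$, but the Chabauty limit of a closed subgroup can be strictly larger than the group generated by the Lie-algebra limit --- larger in dimension, not merely in components. The paper's own Example (4) of Section 3.2 (the helix $t\mapsto (P(t),R(t))$ in $\SL(4,\RR)$, a one-dimensional closed subgroup whose geometric limit is two-dimensional while its Lie-algebra limit is one-dimensional) exhibits precisely the failure mode that your plan to ``treat the identity component through the subalgebra limit and the components through $Z_H(X)$'' cannot rule out; neither $\dim L$ nor the component group of $L$ is controlled by $\frakl_X$ a priori. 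The paper closes this with its expansive-limit argument, which is the real heart of the proof: given $\ell\in L$ and $h_n\in H$ with $b_nh_nb_n^{-1}\to\ell$, apply $\sigma$; since $\frakb\subset\frakq$ one has $\sigma(b_n)=b_n^{-1}$, so $\sigma(b_nh_nb_n^{-1})=b_n^{-1}h_nb_n$ converges as well, to $\sigma(\ell)$. Conjugation by $b_n^{\pm 1}$ scales the $(\frakg_\lambda\to\frakg_\mu)$-blocks of $\Ad(h_n)$ by reciprocal factors, so convergence of both conjugates forces every block of $\Ad(h_n)$ to converge (with the off-blocks corresponding to nonconstant ratios tending to $0$); properness of $G\to\Ad(G)$ --- this is where the finite-center hypothesis enters --- then yields a subsequence $h_n\to h_\infty\in Z_H(X)$, whence $b_n^{-1}\ell b_n\to h_\infty$ and $h_\infty^{-1}\ell\in N_+(X)$. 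This gives $L\subseteq Z_H(X)\ltimes N_+(X)$ for the full, possibly disconnected, groups in one stroke; your dimension observation ($\dim\frakl_X=\dim\frakh$, via $\sigma$ exchanging $\frakn_+$ and $\frakn_-$) is then exactly what the paper uses to see that the unipotent part of $L$ is all of $N_+(X)$. The same step is also what legitimizes your final claim that the sequence limit is conjugate by $\exp X'$ to the one-parameter limit at the level of groups rather than subalgebras. Your argument can be completed by inserting this $\sigma$-plus-properness step; without it, the identification of $L$ is unsupported.
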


In the special case when  $H = K$ (in other words $\sigma$ is a Cartan involution), the limit groups are determined by Guivarc'h--Ji--Taylor \cite{GJT} and also by Haettel \cite{Haettel2}.  Moreover in these articles the Chabauty-compactification of $G/K$ is shown to be isomorphic to the maximal Satake-Furstenberg compactification. 
The analysis leading to Theorem~\ref{thm:symmetric-limits} bears a lot of similarity with the analysis in  \cite{Gorodnik_Oh_Shah}, where the maximal Satake-Furstenberg compactification for affine symmetric spaces $G/H$ is defined. We would like to raise the question whether the Chabauty-compactification of $G/H$ is homeomorphic to its maximal Satake-Furstenberg compactification.

The groups $G = \PGL_n \KK$  with $\KK = \RR$ or $\CC$ are of particular interest, since they are the structure group for projective geometry. In this case, 
Theorem~\ref{thm:symmetric-limits} implies that the limits of symmetric subgroups have a nice block matrix form. Let $H \subset \PGL_{n}\KK$ be a symmetric subgroup and let $L$ be a limit of $H$.
 Then, there is a decomposition $\KK^n = E_0 \oplus \cdots \oplus E_k$ of $\KK^n$ with respect to which $L$ has the following block form:
 $$\left(\begin{array}{ccccc}
A_1 & 0 & 0 &\cdots & 0\\
* & A_2 & 0 &\cdots & 0\\
* & * & A_3&\cdots& 0\\
\cdots &\cdots &\cdots &\cdots &\cdots \\
* & * & *  &\cdots & A_k
\end{array}\right).$$
Here, the blocks denoted $*$ are arbitrary, and the diagonal part $\operatorname{diag}(A_1,\ldots,A_k)$ is an element of~$H$. 
The groups  $\PO_n\CC$, $\PP(\GL_p\CC \times \GL_q\CC)$, $\PP\Sp(2m,\CC)$, where $n= 2m$,  $\PGL_n\RR$, $\PU(p,q)$, where  $n = p+q$, and $\SL(m,\mathbb{H})$, where $n= 2m$  are some of the symmetric subgroups of $\PGL_n\CC$. 
The symmetric subgroups of $\PGL_{n} \RR$ are $\PP(\GL_p\RR \times \GL_q\RR)$, $\PO(p,q)$, where $p+q = n$,  or $\PP\Sp(2m, \RR)$ and $\PP(\GL(m,{\mathbb C}))$, where $n = 2m$.  See Section~\ref{sec:symmetric-subgroups} for a full characterization of the limit groups in each of these cases.

\subsection*{Limits of constant curvature semi-Riemannian geometries.}
Let $\beta$ denote a quadratic form on $\RR^n$ of signature $(p,q)$, meaning 
$\beta\sim-I_p\oplus I_q$. 
The group $\PP\Isom(\beta) = \PO(p,q)$ acts transitively on the domain
$$\XX(p,q) = \{ [x] \in \RP^{n-1} : \beta(x) < 0 \} \subset \RP^{n-1},$$
with point stabilizer isomorphic to $\OO(p-1,q)$. The geometry $(\XX(p,q), \PO(p,q))$ is the projective model for semi-Riemannian geometry of constant curvature, of dimension $p+q-1$ and of signature $(p-1,q)$. 
In the cases $(p,q)$ is $(n,0), (1,n-1), (n-1,1)$ or $(2,n-2)$ we obtain spherical geometry, hyperbolic geometry, de Sitter geometry, and anti de Sitter geometry respectively.

By applying Theorem~\ref{thm:symmetric-limits}, we characterize the limits of these constant curvature semi-Riemannian geometries inside real projective geometry. Here is a brief description of the limit geometries.
A~{\em partial flag} ${\flag}=\{V_0,V_1,\cdots V_{k+1}\}$ of $\RR^n$
is a descending chain of vector subspaces
$$\RR^n=V_0\supset V_1\supset\cdots \supset V_k\supset V_{k+1}=\{0\}.$$ 
A {\em partial flag of quadratic forms} $\pfbeta=(\beta_0,\cdots,\beta_k)$ on ${\flag}$
 is a collection of non-degenerate quadratic forms $\beta_i$ defined on each quotient
$V_i/V_{i+1}$ of the partial flag. 
Define $\Isom(\pfbeta,{\flag})$ to be the group of linear transformations which preserve $\flag$ and induce an isometry of each $\beta_i$, and denote its image in $\PGL_n \RR$ by $\PP\Isom(\pfbeta, \flag)$. 
 Define the domain $\XX(\pfbeta) \subset \RP^{n-1}$ by $$\XX(\pfbeta) := \{ [x] \in\RP^{n-1} : \beta_0(x) < 0\}.$$
Then $\PP \Isom(\flag, \pfbeta)$ acts transitively on $\XX(\pfbeta)$. 
When the flag and quadratic forms are adapted to the standard basis, we denote $\XX(\pfbeta)$ and $\Isom(\pfbeta)$ by 
\begin{align*}\label{PFG-notation}
\XX(\pfbeta) &=: \XX((p_0,q_0),\ldots,(p_k,q_k)),\\
\Isom(\pfbeta) &=: \OO((p_0,q_0),\ldots,(p_k,q_k))\\
&= \left(\begin{array}{ccccc}
\OO(p_0,q_0) & 0 & 0 &\cdots & 0\\
* & \OO(p_1,q_1) & 0 &\cdots & 0\\
* & * & \OO(p_2,q_2) &\cdots& 0\\
\cdots &\cdots &\cdots &\cdots &\cdots \\
* & * & * &\cdots & \OO(p_k,q_k)
\end{array}\right),
\end{align*}
where $*$ denotes an arbitrary block. Note that $\XX(\pfbeta)$ is non-empty if and only if $p_0 > 0$. As a set, the space $\XX((p_0,q_0)\ldots (p_k,q_k))$ depends only on the first signature $(p_0,q_0)$ and the dimension $n = \sum_i (p_i + q_i)$. However, we include all $k$ signatures in the notation as a reminder of the structure determined by $\PO((p_0,q_0),\ldots,(p_k,q_k))$.

\begin{theorem}\label{thm:limits-Hpq_intro}
The limits of the constant curvature semi-Riemannian geometries $(\XX(p,q), \PO(p,q))$ inside $(\RP^{n-1}, \PGL_n \RR)$ are all of the form $(\XX(\pfbeta), \PP \Isom(\flag, \pfbeta))$. Further, $\XX(\pfbeta)$ is a limit of $\XX(p,q)$ if and only if $p_0 \neq 0$, and the signatures $((p_0,q_0),\ldots,(p_k,q_k))$ of $\pfbeta$ partition the signature $(p,q)$ in the sense that
\begin{equation*}
p_0 + \cdots + p_k = p \ \ \text{  and  } \ \ q_0 + \cdots + q_k = q,
\end{equation*}
after exchanging $(p_i, q_i)$ with $(q_i, p_i)$ for some collection of indices~$i$ in $\{1,\ldots,k\}$ (the first signature $(p_0,q_0)$ must \emph{not} be reversed).
\end{theorem}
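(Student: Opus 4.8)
The plan is to deduce this from Theorem~\ref{thm:symmetric-limits} applied to the symmetric subgroup $H=\PO(p,q)\subset\PGL_n\RR$, and then to analyze domains separately in order to pin down the geometric (as opposed to merely group-theoretic) limits. First I would record the relevant involutions. Writing $J$ for the matrix of $\beta$, the group $H$ is the fixed-point set of $\sigma(g)=Jg^{-T}J$, which commutes with the Cartan involution $\theta(g)=g^{-T}$ (so $K=\PO(n)$). Differentiating, $\frakp$ is the symmetric matrices and $\frakq=\{Y:Y^{T}=JYJ\}$, so $\frakp\cap\frakq$ consists of the symmetric matrices commuting with $J$, i.e.\ the block-diagonal symmetric matrices for the $(p,q)$ splitting. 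A maximal abelian subalgebra $\frakb$ is then the full diagonal, and by Theorem~\ref{thm:symmetric-limits} every limit of $H$ is conjugate to $L=Z_H(X)\ltimes N_+(X)$ for some diagonal $X\in\frakb$.

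Next I would compute these two factors explicitly. Ordering the eigenvalues of $X$ increasingly and grouping the standard basis into eigenspaces $E_0,\ldots,E_k$ (with $E_0$ the smallest-eigenvalue eigenspace, placed in the top-left block so that $N_+(X)$ comes out block lower-triangular), the key observation is that $X$ commutes with $J$; hence $J$ preserves each $E_i$ and restricts to a nondegenerate form of some signature $(p_i,q_i)$ with $\sum_i p_i=p$ and $\sum_i q_i=q$. A direct check, comparing the signs of $x_i-x_j$ along matrix units, then gives $N_+(X)$ equal to the block lower-triangular unipotent group and $Z_H(X)=\operatorname{diag}(\OO(p_0,q_0),\ldots,\OO(p_k,q_k))$, whence $L=\PP\Isom(\flag,\pfbeta)$ for the flag and form-flag adapted to the eigenspace decomposition. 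This already shows that every limit group has the asserted block form, with signatures that partition $(p,q)$ directly.

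To upgrade from limit groups to limit geometries I would analyze the domains. Taking $c_t=\exp(tX)$, I would expand $\beta(c_t^{-1}y)=\sum_i\eps_i e^{-2tx_i}y_i^2$ and observe that as $t\to\infty$ the dominant term comes from the smallest eigenvalue, i.e.\ from $E_0$, so that $\beta(c_t^{-1}y)<0$ eventually if and only if $\beta_0(y)<0$ on the $E_0$-component. Hence $c_t\,\XX(p,q)$ converges to $\XX(\pfbeta)=\{\beta_0<0\}$, which identifies the limit domain. This shows the geometric-limit condition of Section~\ref{sec:limits-of-geometries} holds precisely when $\XX(\pfbeta)\neq\emptyset$, i.e.\ when $p_0>0$: any fixed $z$ with $\beta_0(z)<0$ lies in $c_t\,\XX(p,q)$ for all large $t$. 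Together these give the necessity of $p_0\neq0$ and the forward direction of the classification, with the signatures partitioning $(p,q)$ on the nose.

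Finally I would account for the reversal clause, which I expect to be the main subtlety. The point is that the data $\pfbeta$ overdetermines the geometry: since $\OO(p_i,q_i)=\OO(q_i,p_i)$ the group $L$ is unchanged by reversing any block, and since $\XX(\pfbeta)$ depends only on $(p_0,q_0)$ and $n$ the domain is likewise unchanged by reversing the non-top blocks. Reversing the top block, however, replaces the domain $\{\beta_0<0\}$ by its complement and so is not allowed. Thus two signature tuples present the same geometry if and only if they agree in the top block and agree up to reversal in the remaining blocks. Combining this with the fact that the construction above realizes exactly the tuples that partition $(p,q)$ directly, I would conclude that an arbitrary $\XX(\pfbeta)$ is a limit if and only if some reversal of its non-top blocks partitions $(p,q)$ and $p_0\neq0$, which is the statement. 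The bulk of the work is in making the two domain claims fully rigorous: the asymptotic identification of $\XX(\pfbeta)$ as the limit domain within the Chabauty framework, and the precise invariance and non-invariance of the geometry under block reversal; the group-theoretic computation is essentially formal once Theorem~\ref{thm:symmetric-limits} is in hand.
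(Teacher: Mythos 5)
Your proposal is correct and follows essentially the same route as the paper: Theorem~\ref{thm:symmetric-limits} with $\frakb$ equal to the full diagonal reduces everything to diagonal one-parameter conjugation, the limit group $Z_H(X)\ltimes N_+(X)$ is identified with $\PP\Isom(\flag,\pfbeta)$ by reading off the signatures of $J$ restricted to the eigenspaces of $X$, and the limit domain is pinned down exactly as in the paper's proof of Theorem~\ref{thm:limits-Xbeta}, where the limit $\beta_0'$ of the rescaled pulled-back forms satisfies $\beta_0'(y_\infty)\le 0$ at the base point, excluding the positive orbit and hence any reversal of the top signature. The only cosmetic difference is that you let $X$ range over all of $\frakb$ rather than a fixed Weyl chamber, which dispenses with the paper's bookkeeping in Section~\ref{sec:Opq} of coset representatives $\mathcal W$ of $W/W_{H\cap K}$ (the conjugates $H_w$ and rearranged forms $J_w$) while producing the same family of limit groups.
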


\noindent See Section~\ref{sec:Hpq} for a detailed discussion of these partial flag geometries.

\subsection*{The Thurston geometries} 

One motivation for our work is the study of transitions between the eight three-dimensional \emph{Thurston geometries}, homogeneous Riemannian geometries which play an essential role in the classification of compact three-manifolds. Since each of the eight geometries (almost) admits a representation in real projective geometry \cite{molnar}, \cite{thiel}, it is natural to study transitions between them in the projective setting. 

From the point of view of Thurston's picture of hyperbolic Dehn surgery space, one expects to find five of the eight Thurston geometries as limits of hyperbolic geometry. 
There are various methods, due to Porti and collaborators \cite{Porti-98,Porti-02,HPS}, to realize Euclidean, Nil, and Sol geometry structures as limits of certain families of collapsing hyperbolic structures. However, efforts to realize the Thurston geometries which fiber over the hyperbolic plane, namely $\HH^2 \times \RR$ and $\widetilde{\SL_2 \RR}$, as limits of three-dimensional hyperbolic geometry have so far proved fruitless.
Many Seifert-fibered three-manifolds which admit a structure modeled on either $\HH^2\times \RR$ or $\widetilde{\SL_2 \RR}$ also admit hyperbolic cone-manifold structures with cone angles arbitrarily close to $2\pi$. Commonly in examples, such structures are found to collapse down to a hyperbolic surface (the base of the Seifert fibration) as the cone angle increases to $2\pi$. 
Recent work of Danciger \cite{danciger1, danciger2} shows that in this context, the most natural sequence of conjugacies in projective space yields a non-metric geometry called \emph{half-pipe} geometry as limit. However, Danciger's construction does not rule out the possibility that some other clever sequence of conjugacies could produce $\HH^2 \times \RR$ or $\widetilde{\SL_2 \RR}$ geometry as limit. As an application of Theorem~\ref{thm:limits-Hpq_intro}, we enumerate the limits of hyperbolic geometry inside of projective geometry and prove:

\begin{theorem}
\label{cor:Thurston-geoms_intro}
 The Thurston geometries which locally embed in limits of hyperbolic geometry (within real projective geometry) are 
: ${\mathbb E}^3$, Solv geometry, and Nil geometry. In particular, neither $\HH^2 \times \RR$ nor $\widetilde{\SL_2 \RR}$ locally embed into any limit of hyperbolic geometry.
\end{theorem}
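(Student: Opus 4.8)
The plan is to combine the classification of Theorem~\ref{thm:limits-Hpq_intro} with a purely Lie-theoretic criterion for local embedding, and then to isolate a single representation-theoretic obstruction that rules out the fibered geometries. Three-dimensional hyperbolic geometry is $(\XX(1,3),\PO(1,3))$ inside $(\RP^3,\PGL_4\RR)$. Since $p=1$ forces $p_0=1$, Theorem~\ref{thm:limits-Hpq_intro} tells me its nontrivial limits are indexed by the compositions $(d_0,\dots,d_k)$ of $4$ with $d_0\ge 1$: block $0$ carries the Lorentzian form of signature $(1,d_0-1)$ and every other block is definite. For each such limit I would record, via Theorem~\ref{thm:symmetric-limits}, the Levi--nilpotent decomposition $\frakl=\frakc\ltimes\frakn$ of its Lie algebra, where $\frakc=\operatorname{Lie}(Z_H(X))=\frakso(1,d_0-1)\oplus\bigoplus_{i\ge1}\frakso(d_i)$ and $\frakn=\operatorname{Lie}(N_+(X))$. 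All limits are $3$-dimensional, matching the Thurston geometries.

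Next I would reduce ``locally embeds'' to an algebraic statement. Since $L$ fixes no open subset of $\RP^3$ pointwise, it acts effectively on $\XX(\pfbeta)$; a Thurston geometry $(Y,\Isom_0 Y)$ locally embeds in a limit $(\XX(\pfbeta),L)$ if and only if $\operatorname{Lie}\Isom_0(Y)$ injects into $\frakl$ as a transitive subalgebra whose isotropy lands in a point stabilizer. The key point is that the \emph{full} isometry algebra is forced, not merely a transitive subalgebra, because the transition pseudogroup of a $(Y,\Isom_0 Y)$-structure consists of all of $\Isom_0 Y$. Granting this, the positive cases are direct: for the composition $(1,3)$ one computes $\frakl=\frakso(3)\ltimes\RR^3=\operatorname{Lie}\Isom(\EE^3)$ with the domain an affine chart, so $\EE^3$ is literally this limit; Sol embeds in the $(2,2)$ limit as $\frakso(1,1)\ltimes\RR^2$ built from the boost and its two opposite (abelian, hence commuting) root spaces; and Nil embeds in the $(1,2,1)$ limit as a Heisenberg subalgebra of the $5$-dimensional nilradical normalized by the block $\SO(2)$. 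These recover the collapses realized by Porti and collaborators \cite{Porti-98,Porti-02,HPS}.

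The crux is the negative part. For $Y\in\{\HH^2\times\RR,\ \widetilde{\SL_2\RR},\ S^2\times\RR\}$ one has $\operatorname{Lie}\Isom_0(Y)=\mathfrak{s}\oplus\RR$ with $\mathfrak{s}$ simple ($\frakso(1,2)$ or $\frakso(3)$) and the $\RR$ \emph{central} and acting by a nowhere-vanishing Killing field; for $\widetilde{\SL_2\RR}$ the centrality of this $\RR$ is the essential input, coming from the center of $\widetilde{\SL_2\RR}$ (equivalently, left and right multiplication commute). Any copy of $\mathfrak{s}$ in $\frakl$ projects isomorphically onto a simple factor of the Levi $\frakc$ (Levi--Malcev); the only compositions whose $\frakc$ has a simple factor are $(3,1)$, giving $\frakso(1,2)$, and $(1,3)$, giving $\frakso(3)$ (all other proper limits have $\frakc$ abelian). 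In both of these, $\frakl=\mathfrak{s}\ltimes\RR^3$ with $\RR^3$ the standard, irreducible representation of $\mathfrak{s}$, and a short check shows the centralizer of $\mathfrak{s}$ in $\frakl$ is trivial. Thus the required central $\RR$ has nowhere to go, contradicting injectivity, and none of these three geometries locally embeds in any limit. The geometries $S^3,\HH^3$ are excluded more crudely, since their isometry algebras $\frakso(4),\frakso(1,3)$ are semisimple of dimension $6$, larger than the semisimple part of $\frakc$ for every proper limit.

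I expect the main obstacle to be making the reduction of the second paragraph rigorous: arguing carefully that local embedding of the geometry $(Y,\Isom_0 Y)$, carrying its full symmetry group as transition pseudogroup, genuinely forces the entire isometry algebra into $\frakl$, and verifying effectiveness so that the induced algebra homomorphism is injective and the isotropy comparison is legitimate. Once that framework is in place, the heart of the argument is the clean observation that every limit of $\HH^3$ containing a simple factor has irreducible standard nilradical, leaving no invariant line for the central translation that both $\HH^2\times\RR$ and $\widetilde{\SL_2\RR}$ require; confirming irreducibility and the vanishing of the centralizer in the two relevant cases is then a brief explicit computation.
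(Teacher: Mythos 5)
Your proposal is correct and follows the same overall skeleton as the paper: enumerate the limits of $\XX(1,3)$ via Theorem~\ref{thm:limits-Hpq_intro} (your composition bookkeeping reproduces the diagram~(\ref{list-H3}) exactly), exhibit the positive cases explicitly (Euclidean as the $(1,3)$ limit itself, $\mathrm{Sol}$ inside $\OO((1,1)(2))$, Nil as a Heisenberg group inside the five-dimensional unipotent of $\OO((1,0)(2)(1))$ normalized by the $\OO(2)$ block), and then localize any three-dimensional simple subalgebra to half-pipe or Euclidean via the Levi factor, killing the remaining candidates by a centralizer computation. For $\HH^2\times\RR$ and $\mathbb S^2\times\RR$ your argument is essentially the paper's (the paper computes the centralizer of $\SO_0(2,1)\times\{1\}$ in $\OO((1,2)(1))$ to be $\operatorname{diag}(\pm I_3,\pm 1)$, which is your ``trivial centralizer of $\mathfrak s_0$ in $\mathfrak s_0\ltimes\RR^3$'' statement), and your Levi--Malcev plus Whitehead $H^1(\mathfrak s_0,\RR^3)=0$ packaging makes rigorous the conjugation step the paper merely asserts (``any such subgroup is conjugate into the block diagonal''). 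The genuine divergence is $\widetilde{\SL_2\RR}$: the paper does \emph{not} use the central $\RR$ there, but instead shows that even the stiffened geometry with structure group $\PSL_2\RR$ alone (the left action) fails to embed, because any copy of $\frakso(2,1)$ in half-pipe preserves a totally geodesic copy of $\HH^2$ and hence cannot act transitively, whereas the left $\PSL_2\RR$ action on $\widetilde{\SL_2\RR}$ is transitive. Your centrality argument is uniform across all three fibered geometries and suffices for the theorem as stated (with the Thurston geometries carrying their full isometry groups), but it proves strictly less for $\widetilde{\SL_2\RR}$, since it leaves open whether some sub-geometry with smaller structure group embeds; the paper's transitivity argument closes that too. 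Two small remarks: the ``main obstacle'' you anticipate is a non-issue, because the paper's definition of local embedding already demands a local isomorphism $(Y,H)\dashrightarrow(Y',H')$ of geometries, i.e.\ an injection of the \emph{entire} Lie algebra of $\Isom_0 Y$ into $\frakl$ matching point stabilizers --- no pseudogroup argument is needed; and, like the paper, you tacitly read ``limit'' as proper limit when you say only $(3,1)$ and $(1,3)$ have a simple Levi factor --- the trivial limit has $\frakc=\frakso(1,3)$ simple, but the same obstruction applies there since any three-dimensional simple subalgebra of $\frakso(1,3)\cong\mathfrak{sl}_2\CC$ has trivial centralizer, so no central $\RR$ fits.
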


\noindent In future work we intend to give a complete description of the possible transitions between the eight Thurston geometries. 

\subsection*{Structure of the paper}
In Section~\ref{sec:limits-of-geometries} we introduce the notion of (geometric) limits of groups and limits of geometries and revisit the transition from hyperbolic geometry through Euclidean geometry  to spherical geometry (within real projective geometry). 
In Section~\ref{sec:limit-groups} we describe several notions of limits of groups, illustrate their differences, show when they agree, and describe some basic properties of geometric limits of real algebraic Lie groups. 
In Section~\ref{sec:symmetric-subgroups} we recall the basic structure theory of affine symmetric spaces, and prove (a more descriptive version of) Theorem~\ref{thm:symmetric-limits}, determining the limit groups of symmetric subgroups. We then apply the theorem to give explicit descriptions of the limits of symmetric subgroups of the general linear group. 
The limit geometries of the semi-Riemannian real hyperbolic geometries in terms of partial flags geometries are described in Section~\ref{sec:pfqf}. In the end of that section we discuss the applications to Thurston geometries.

\subsection*{Acknowledgements}
This work  grew out of discussions at an AMS Mathematical Research Community at Snowbird in 2011, and we thank the {\em AMS} and {\em NSF} for their supporting this program.  
Some of this work occured during the trimester ``Geometry and Analysis of Surface Groups'' at {\em IHP},  and we 
gratefully appreciate their support and the stimulating environment there. 
We thank the research network ``Geometric Structures and Representation Varieties" (GEAR) , funded by the NSF under the grant numbers DMS 1107452, 1107263, and 1107367, for supporting reciprocal visits to Princeton, Santa Barbara and Heidelberg. 

We thank { Marc Burger} for interesting discussions, and in particular for suggesting to work in the context of affine symmetric spaces, and pointing us to the relevant structure theorems. 
We also profited from discussions with {Thomas Haettel}.
We thank Benedictus Margeaux for providing a proof of Lemma~\ref{lem:centralizers} on math overflow. 

 D.C. was partially supported by NSF grants DMS-0706887,  1207068 and  1045292.
J.D. was partially supported by the National Science Foundation under the grant DMS 1103939.
A.~W. was partially supported by the National Science Foundation under agreement No. DMS-1065919 and 0846408, by the Sloan Foundation, by the Deutsche Forschungsgemeinschaft, and by the ERCEA under ERC-Consolidator grant no. 614733.

\section{Limits of Geometries}
\label{sec:limits-of-geometries}

\begin{definition}
A {\em geometry} is a pair $(X,G)$ where $G$ is a Lie group acting transitively by analytic maps on
a connected, smooth manifold $X$.
\end{definition}
\noindent The requirement in the definition that the action be transitive implies that $X$ identifies with $G/G_x$, where
$G_x$ denotes the stabilizer of a point $x \in X$. Note that we do not require the point stabilizer $G_x$ to be compact.
Some examples of geometries are 
\begin{enumerate}
\item[(1)] Euclidean geometry $\mathbb E^n$: The space $X  = \RR^n$ and the structure group $G$ is the semi-direct product of the orthogonal group $\OO(n)$ and the translation group $\RR^n$.
\item[(2)] Spherical geometry $\mathbb S^n$: The space $X$ is the unit sphere in $\RR^{n+1}$ and the group $G$ is the orthogonal group $\OO(n+1)$.
\item[(3)] Hyperbolic geometry $\mathbb H^n$: The space $X  = \{ [\mathbf{x}] \in \RP^{n}: -x_{n+1}^2 + x_1^2+ \cdots + x_n^2 < 0\}$ is the set of negative lines with respect to the standard quadratic form of signature $(1,n)$. The structure group $G = \PO(1,n)$ is the group of projective transformations preserving $X$.
\item[(4)] Real projective geometry: $X =  \RP^n$, $G = \PGL_{n+1} \RR$.
\end{enumerate}

\begin{definition}
Given geometries $(X,G)$ and $(X', G')$ a \emph{morphism} $(X,G) \to (X',G')$ is a Lie group homomorphism $\Phi: G \to G'$ such that for some (and hence any) $x \in X$ there is an $x' \in X'$ such that $\Phi(G_x) \subset G_{x'}'$. The map $\Phi$ induces an analytic map $F: X \to X'$ defined by $F(x) = x'$ and the property that $F$ is $\Phi$-equivariant: $$F(g\cdot y) = \Phi(g) F(y).$$ The map $\Phi$ defines an \emph{isomorphism} of geometries if $\Phi$ is an isomorphism of Lie groups and $\Phi(G_x) = G_{x'}'$. If $\Phi$ is surjective, we say $(X,G)$ \emph{fibers} over $(X', G')$.
\end{definition}

Recall that a local homomorphism $\varphi: G \dashrightarrow G'$ of Lie groups is a map $\varphi: V \to G$, defined on a neighborhood $V$ of the identity in $G$, such that $\varphi(gh) = \varphi(g)\varphi(h)$ and $\varphi(g)^{-1} = \varphi(g^{-1})$  whenever all terms are defined. A local homomorphism $\varphi$ is a local isomorphism if $\varphi$ is locally injective, meaning injective when restricted to some small neighborhood of the identity in $G$, and locally surjective, meaning has image containing a small neighborhood of the identity in $G'$. Note that if $\frakg$ and $\frakg'$ denote the Lie algebras of $G$ and $G'$, then the differential $\varphi_*: \frakg \to \frakg'$ of a local homomorphism of Lie groups $\varphi$ is a homomorphism of Lie algebras $\varphi_*: \frakg \to \frakg'$ and conversely any homomorphism of Lie algebras is the differential of a local homomorphism of Lie groups as above.  
\begin{definition}
A \emph{local morphism of geometries} $(X,G) \dashrightarrow (X', G')$ is a local homomorphism $\varphi : G \dashrightarrow G'$ such that for some (and hence any) $x \in X$, there is an $x' \in X'$ with the property that the restriction of $\varphi$ to $G_x$ has image in $G_{x'}'$. The local homomorphism $\varphi$ induces a local analytic map $f: X \dashrightarrow X'$, defined on a neighborhood of $x$, which is locally $\varphi$-equivariant, meaning $f(g \cdot y) = \varphi(g)f(y)$ whenever all terms are defined. 
The local morphism is a \emph{local isomorphism} if $\varphi$ is a  local isomorphism $G \dashrightarrow G'$ and $\varphi$ restricts to a local isomorphism $G_x \dashrightarrow G_{x'}'$.
\end{definition}

Note that given $\varphi$ as in the definition, the differential $\varphi_*: \frakg \to \frakg'$ satisfies that $\varphi_*(\frakg_x) \subset \frakg_{x'}'$, where $\frakg_x$ and $\frakg_{x'}'$ are the Lie algebras of the point stabilizers $G_x$ and $G_{x'}'$. Conversely a Lie algebra homomorphism $\frakg \to \frakg'$ taking infinitesimal point stabilizers into infinitesimal point stabilizers (which could be called an infinitesimal morphism of geometries) determines a local morphism of geometries.  The map $\varphi: G \dashrightarrow G'$ determines a local isomorphism if and only if $\varphi_*$ is an isomorphism and $\varphi_*(\frakg_x) = \frakg_{x'}'$.

If $G$ is connected then the {\em universal cover} $(\tilde X,\tilde G)\longrightarrow(X,G)$ of a geometry $(X,G)$ is defined as follows: Let $G_x \subset G$ be the stabilizer of a point $x \in X$ so that $X = G/G_x$. Then $\tilde G \to G$ is the universal covering Lie group of $G$, and $\tilde X = \tilde G /\tilde G_x$, where $\tilde G_x \subset \tilde G$ is the identity component of the inverse image of $G_x$. Indeed $\tilde X \to X$ is the universal cover of $X$. Note that the action of $\tilde G$ on $\tilde X$ might not be faithful, even if the action of $G$ on $X$ was faithful. In this case, one may replace $\tilde G$ with its quotient by the kernel of the action. Every geometry is locally isomorphic to its universal cover. A local morphism  (resp. local isomorphism) of geometries induces a morphism (resp. isomorphism) of the universal covering geometries.


\begin{definition}
The geometry $(Y,H)$ is a {\em subgeometry} of  $(X,G)$, written $(Y,H)\subset(X,G)$, if $H$
is a closed subgroup of $G$ 
and $Y$ is an open subset of $X$ on which $H$ acts transitively. We say that a geometry $(Y,H)$ \emph{locally embeds} in $(X,G)$ if $(Y,H)$ is locally isomorphic to a subgeometry $(Y',H') \subset (X,G)$.
\end{definition}
For example, both hyperbolic and Euclidean geometry, in the forms described above, are subgeometries of real projective geometry.
Spherical geometry is a two fold covering of a subgeometry of projective geometry, and therefore spherical geometry locally embeds in projective geometry but it is not a subgeometry.
Similarly, the Thurston geometry known as $\widetilde{\SL_2{\mathbb R}}$ is not a subgeometry of projective geometry, but it does locally embed.

In this article we are concerned with limits of geometries, in particular with limits of subgeometries of a given geometry $(X,G)$. First we introduce the notion of limit of closed subgroups. 

\begin{definition}
\label{def:limit-subgroups}

\noindent
\begin{enumerate}
\item A sequence $H_n$ of closed subgroups of a Lie group $G$ \emph{converges geometrically} to a closed subgroup $L$ if every $g \in L$ is the limit of some sequence $h_n \in H_n$, and if every accumulation point of every sequence $h_n \in H_n$ lies in $L$. We also say that $L$ is the \emph{geometric limit} of the sequence $H_n$.
Note that $L$ is the geometric limit of $H_n$ if and only if $H_n$ converges to $L$ in the Chabauty topology on closed subgroups.
\item We say $L$ is a \emph{conjugacy limit} (or just \emph{limit}) of $H$ if there exists a sequence $c_n \in G$ so that the conjugate groups $H_n = c_n H c_n^{-1}$ converge geometrically to $L$.
\end{enumerate}
\end{definition}

\noindent The set ${\mathcal C} (G)$ of closed subgroups with the Chabauty topology is a compact space \cite{chabauty}, \cite{bridson}, \cite{delaharpe}, so for every sequence of closed subgroups there is some subsequence which has a geometric limit.
We on the set of all closed subsets of a non-compact space is commonly defined to be the topology of Hausdorff convergence in compact neighborhoods. The Chabauty topology is simply the subspace topology on the set of closed subgroups.

\begin{definition}
\label{def:limit-subgeom}
\noindent
\begin{enumerate}
\item
A sequence of subgeometries $(Y_n,H_n)\subset (X,G)$ {\em converges} to the subgeometry
$(Z, L)\subset (X,G)$ if $H_n$ converges geometrically to $L$ and 
there exists $z\in Z \subset X$ such that for all $n$ sufficiently large $z\in Y_n$.
\item We say that a subgeometry $(Z,L)$ is a \emph{conjugacy limit} (or just \emph{limit}) of $(Y,H)$ if there exists a sequence $g_n \in G$ so that the sequence of conjugate subgeometries $(g_n Y, g_n H g_n^{-1})$ converges to $(Z,L)$.
\end{enumerate}
\end{definition}

The motivating situation, as described in the introduction, is that of collapsing $(Y,H)$ structures on a manifold: 
structures for which each chart (or alternatively the developing map) collapses to a local submersion onto a lower-dimensional subset $N$ of $Y$ and each transition map (alternatively the holonomy representations) converges into some smaller subgroup $P \subset H$ that preserves $N$.
The goal is to conjugate the $(Y,H)$-structures inside of $(X,G)$, so that the charts (developing maps) no longer collapse and the transition maps (holonomy representations) converge into some limit group $L$ of $H$ which contains~$P$. Then, setting $Z = L \cdot N$, the geometry $(Z, L)$ is a limit of $(Y,H)$ in the sense of Definition~\ref{def:limit-subgeom} and the limiting geometric structure is a $(Z,L)$ structure.

\subsection{The transition from spherical to Euclidean to hyperbolic}
\label{sec:spherical-Euclidean} 
Let us now illustrate the definitions in a familiar example.

Consider the path of quadratic forms $\beta_t= -x_{n+1}^2- t(x_1^2+\cdots+x_n^2)$ on ${\mathbb R}^{n+1}$, and assume $t \geq 0$.
These quadratic forms define sub-geometries $(\XX(\beta_t), \PO(\beta_t))$ of projective geometry where 
$$\XX(\beta_t) = \{ [x] \in \RP^n : \beta_t(x) < 0\},$$
$$\PO(\beta_t) = \operatorname{P}\left\{ A \in \GL(n+1) : A^* \beta_t = \beta_t \right\}.$$
For all $t > 0$, $\PO(\beta_t)$ is conjugate to $\PO(\beta_1)$ which is the standard copy of $\PO(n+1)$, and the geometry $(\XX(\beta_t), \PO(\beta_t))$ is conjugate to the standard realization (found at $t=1$) of spherical geometry $\mathbb S^n$ as a (covering) sub-geometry of projective geometry. The element $c_t \in \PGL(n+1)$ conjugating $(\XX(\beta_1), \PO(\beta_1))$ to $(\XX(\beta_t), \PO(\beta_t))$ is 
the diagonal matrix $c_t = \operatorname{diag}(1/\sqrt{t}, \ldots,1/\sqrt{t}, 1)$ with the first $n$ diagonal entries equal to $1/\sqrt{t}$ and the final diagonal entry equal to one (note $c_t^*\beta_t = \beta_1$). This corresponds to scaling the plane $\RR^n \subset \RR^{n+1}$ spanned by the first $n$ coordinate directions.

At time $t = 0$, the quadratic form becomes degenerate. The group preserving $\beta_0$ is simply the group of matrices that preserve the last coordinate $|x_{n+1}|$; this is a copy of the affine group
$$\operatorname{Aff}(n)=\left\{\left(\begin{matrix}
A & b\\
0 & 1 
\end{matrix}\right)\right\}\subset \PGL(n+1,{\mathbb R}).$$
However, the affine group is \emph{not} the limit of the groups $\PO(\beta_t)$ as $t \to 0^+$.
In fact, the limit  of the conjugate subgroups $\PO(\beta_t))$ as $t \to 0^+$ is the group of Euclidean isometries. In order to simplify the discussion, let us demonstrate this at the level of Lie algebras.
The Lie algebra $\mathfrak{so}(\beta_t)$ of $\PO(\beta_t)$ is conjugate to the Lie algebra $\mathfrak{so}(\beta_1) = \mathfrak{so}(n+1)$:

\begin{align*}
\mathfrak{so}(\beta_t) &= 
\left(
\begin{matrix}
\sqrt{t}^{-1}&    &  & \\
  & \sqrt{t}^{-1} &  &  \\
 &  & \ddots & \\
  &          & & 1
\end{matrix}
\right)
\left(\begin{matrix} 
0 &    &   a_{ij} & \vdots\\
 &  \ddots &  &  b\\
-a_{ji}&  & \ddots & \vdots\\
\hdots &    -b^T    &\ldots & 0
\end{matrix}\right)
   \left(
\begin{matrix}
\sqrt{t} &    &  & \\
  &  \sqrt{t} &  &  \\
 &  & \ddots & \\
  &          & & 1
\end{matrix}
\right)\\
&=
\left(\begin{matrix} 
0 &    &   a_{ij} & \vdots\\
 &  \ddots &  &  \sqrt{t}^{-1}b\\
-a_{ji}&  & \ddots & \vdots\\
\hdots &    \sqrt{t} b^T      &\ldots & 0
\end{matrix}\right).
\end{align*}
It is easy to read off the limit Lie algebra via the following heuristic reasoning. To find an element of the limit Lie algebra, we are allowed to vary the entries $a_{ij}, b$ of the matrix as $t \to 0^+$ in any way that produces a limit. Since $\sqrt{t}^{-1} \to \infty$, it follows that we must have $b = O(\sqrt{t})$. Thus the limit Lie algebra has the form:

$$
\lim_{t \to0^+} \mathfrak{so}(\beta_t) = 
\left(\begin{matrix} 
0 &    &   a_{ij} & \vdots\\
 &  \ddots &  &  b'\\
-a_{ji}&  & \ddots & \vdots\\
\hdots &    0      &\ldots & 0
\end{matrix}\right) = \mathfrak{isom}(\EE^n)
$$
where $b'$ can be any column vector. We recognize this limit as the Lie algebra of the subgroup of the affine group preserving the standard Euclidean metric on the affine patch $x_{n+1} \neq 0$.
In fact, it is only slightly more difficult to show that the limit of the Lie groups $\PO(\beta_t)$ is indeed the group of Euclidean isometries 
$$\Isom(\EE^n) = \operatorname{P}\left( \begin{pmatrix} \OO(n) & \\ & \pm1 \end{pmatrix} \ltimes \begin{pmatrix} I_n &\RR^n \\ &1 \end{pmatrix}\right).$$
To determine the limit of the homogeneous spaces $\XX(\beta_t)$, we use Definition~\ref{def:limit-subgeom}. Consider any point $z$ in the affine patch $\EE^n = \{ [x] : x_{n+1} \neq 0\}$. Then, of course $z$ is in $\XX(\beta_t) = \RP^n$ for all $t > 0$. Note that if we choose $z$ to be the usual origin of $\EE^n$, then $z$ is fixed by $c_t$. 
 The notion that the limit of a constant sequence of spaces ($\RP^n$) could be anything other ($\EE^n$) than that space again may seem counter-intuitive. However, the important thing to realize is that the orbit of $z$ under the groups $\PO(\beta_t)$ is $\RP^n$ while the orbit of $z$ under the limit group $\Isom(\EE^n)$ is now the smaller space $\EE^n$. This is indeed the relevant notion of limit in the context of geometric structures.
 
Next, for $t < 0$, the $\beta_t$ have signature $(1,n)$ and the corresponding sub-geometries $(\XX(\beta_t), \PO(\beta_t))$ are all conjugate to the standard copy $(\XX(\beta_{-1}), \PO(\beta_{-1}))$ of the projective model for hyperbolic space $\HH^n$. The reasoning above applies similarly in this case to show that the limit of $\PO(\beta_t)$ as $t \to 0^-$ is again the group $\Isom(\EE^n)$ of Euclidean isometries. In this case the spaces $\XX(\beta_t)$ are expanding balls in $\RP^n$ which eventually engulf any point $z$ in the affine patch $\EE^n = \{ [x] : x_{n+1} \neq 0\}$. Thus $(\EE^n, \Isom(\EE^n)$ is the limit of $(\XX(\beta_t), \PO(\beta_t))$ as $t \to 0^-$.

It is worth noting that this transition of homogeneous spaces can be seen nicely in terms of certain quadric hyper-surfaces in $\RR^{n+1}$. For, the level sets of the quadratic forms $\beta_t$ are either ellipsoids if $t > 0$, or hyperboloids of two sheets if $t < 0$. Any such level set $\beta_t = -k$ is preserved by the lift $\OO(\beta_t)$ of $\PO(\beta_t)$ and projects (two to one) to $\XX(\beta_t)$ in projective space; hence it gives a nice model for the same geometry. In the case $t=1$, the level set $\beta_t = -1$ is the unit sphere and describes the standard model for spherical geometry, while in the case $t= -1$, the level set $\beta_t = -1$ is the standard hyperboloid model for hyperbolic geometry. Note that for all $t$, the level set $\beta_t = -1$ contains the two points $(0,\ldots,0,\pm1)$. As $t \to 0$ (from either direction), the limit of the surfaces $\beta_t = -1$, in the topology of Hausdorff convergence on compact sets, is the surface $\beta_0 = -1$, which is two parallel affine hyper-planes $x_{n+1} = \pm 1$. See Figure~\ref{fig:ellipsoids}. If one wishes, one may define invariant metrics on the $\XX(\beta_t)$ which transition from uniform positive curvature to uniform negative curvature as $t$ changes from positive to negative. However, its important to note that there is no natural such continuous path of metrics defined from the ambient geometry. In particular, the natural metric on the surfaces $\beta_t = -1$ induced by the quadratic forms $\beta_t$ have curvature $+1$ when $t > 0$ and curvature $-1$ when $t < 0$ (and of course, $\beta_0$ itself does not define any metric). Hence the projective geometry formulation of the transition from spherical to Euclidean to hyperbolic is independent of any metric formulation.

\begin{figure}[h]
{\centering

\def\svgwidth{3.3in}
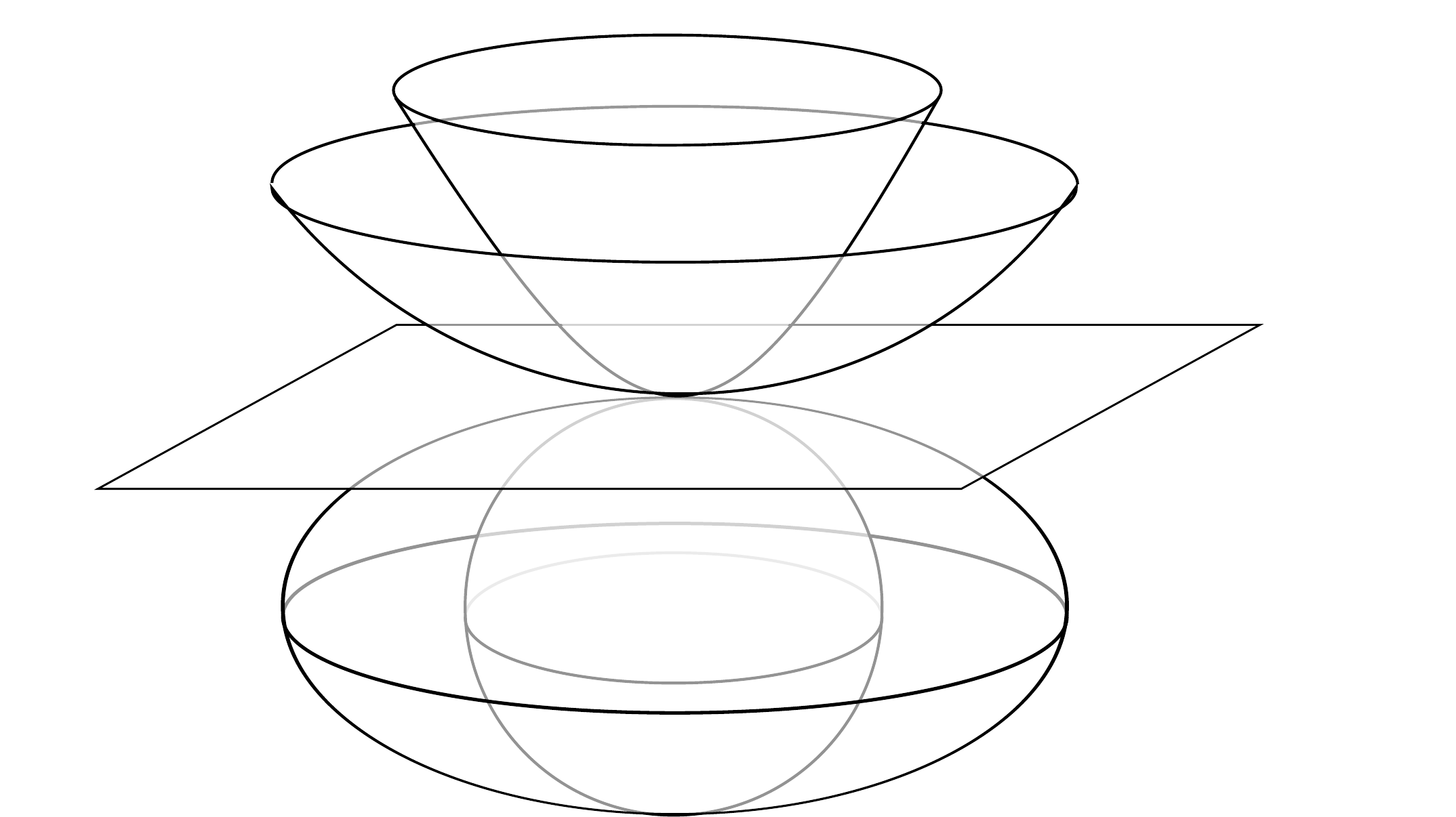

}
\caption{Ellipsoids and hyperboloids defined by $\beta_t = -1$ limit as $t \to 0$ to a pair of opposite affine hyperplanes (only one of these planes is drawn). \label{fig:ellipsoids}}
 \end{figure}

\section{Limits of Groups}\label{sec:limit-groups}
Classifying the limits of closed subgroups $H$ of $G$ is the central problem when classifying the limits of a sub-geometry $(Y,H)$ in $(X,G)$.
Thus in the next two sections, we restrict our attention to limits of Lie groups and momentarily forget about homogeneous spaces. 

We are mainly interested in \emph{geometric limits} of Lie subgroups (Definition~\ref{def:limit-subgroups}). 
However, there are other inequivalent definitions of limit of a group, and it is helpful to understand
how they differ and when they coincide. We explore a few of these alternative notions in Section~\ref{sec:other-notions} and illustrate them through examples in Section~\ref{sec:examples}. In Section~\ref{sec:limit-props} we derive some basic properties of geometric limits of linear algebraic Lie groups.

\subsection{Various notions of limit}
\label{sec:other-notions}
Let $H_n$ be a sequence of closed Lie subgroups, of constant dimension, of the Lie group $G$. 
We introduced the {\em geometric limit} of $H_n$ in Definition~\ref{def:limit-subgroups}. Here are several related notions of limit.

\begin{enumerate}
\item[(1)] The {\em connected  geometric  limit} $\lim_0H_n$ is the connected component of the identity in the geometric limit.
In general this is different than the geometric limit of the connected component of the identity.

\item[(2)] In the specific case that $H_n = c_n H c_n^{-1}$ are all conjugate, we may define the {\em local geometric  limit}, denoted $\operatorname{local}$-$\lim H_n$, as the union of the 
geometric limits of conjugates $c_n C c_n^{-1}$ of compact neighborhoods $C \subset H$ of the identity. It might have smaller dimension than $H$.
This limit is contained in the geometric limit, but excludes conjugates of elements moving to infinity.
One may also define a notion of local geometric limit with respect to a subgroup $P \subset H$; this means the union of geometric limits of neighborhoods of the form $P \cdot C$ for $C$ a compact neighborhood of the identity.

\item[(3)] Very much related to the local geometric limit is the notion of expansive limit.
Again, we work in the case that $H_n = c_n H c_n^{-1}$ are all conjugate and we consider a subgroup $P \subset H$, such that $c_n P c_n^{-1} = P$. 
A (local) geometric limit is called an \emph{expansive limit}, if every element of the limit group $L$ is of the form  $\ell = \lim c_n h_n c_n^{-1}$, for some sequence $h_n \in H$ with $h_n \to h_\infty \in P$. Intuitively, an expansive limit is a limit obtained by blowing up an infinitesimal neighborhood of $P$. Expansive limits are often the relevant limits to study in the context of collapsing geometric structures and geometric transitions. See discussion following Definition~\ref{def:limit-subgeom}. In Section~\ref{sec:symmetric-subgroups}, we will demonstrate that all limits of symmetric subgroups of semi-simple Lie groups are expansive.

\item[(4)] The {\em Lie algebra limit}  is the Lie sub-algebra of $\frakg = \operatorname{Lie}(G)$ obtained from the limit of the sequence of 
Lie subalgebras ${\mathfrak h}_n\subset {\mathfrak g}$ of the
subgroups $H_n$. As the $\mathfrak{h}_n$ are vector sub-spaces of $\frakg$, we may (up to subsequence) extract a limit $\mathfrak{l}$, which is a vector subspace of the same dimension as the $\mathfrak{h}_n$ and in fact a Lie sub-algebra. Then $\mathfrak{l}$ defines a local group near the identity in $G$ and generates a subgroup that we call $\fraka$-$\lim H_n$. It has the same dimension as the $H_n$.
Note, however, that this subgroup might not be closed. We call the closure of $\fraka$-$\lim H_n$ the {\em algebraic limit}, denoted $a$-$\lim H_n$. It is a connected Lie subgroup which might have
larger dimension than the $H_n$.

\item[(5)] Let us mention a related, but distinct notion of limit at the level of Lie algebras.
The Lie algebra $\frakh$ is determined by its Lie bracket 
$[\ ,\ ]:\frakh\times \frakh \to \frakh$. This is
a bilinear map and so is determined by its values on a basis, and thus by
 a finite collection of {\em structure constants}. One may
continuously change these constants, taking care to ensure they still determine a Lie algebra,
and in this way pass between the Lie algebras of different geometries. A path corresponding to
change of bases leads to the theory, due to In\"{o}n\"{u}-Wigner, of \emph{contractions} of Lie algebras which is useful in some physics contexts \cite{burde}, \cite{Wigner}.
This notion is independent of any embedding of $\frakh$ into a larger Lie algebra.

\item[(6)] Although we do not pursue it here, there is also a notion of geometric limit of a Lie group that does
not involve conjugacy inside a larger group. It is based on the idea  that an arbitrarily large compact 
subset of the limit
is almost isomorphic to a (possibly small) subset of the original group.
 A Lie group $L$  is an {\em intrinsic limit} of a Lie group
$G$ if for every compact subset $C\subset L$ and $\epsilon>0$ there is an open set $U \subset G$ and an immersion
$f:U\longrightarrow L$ such that $f(U)\supset C$  
and $f$ is $\epsilon$-close to an isomorphism in the sense that  if $a,b,ab\in C$ there are $\alpha,\beta,\alpha\beta\in U$ with $f(\alpha)=a$, $f(\beta)=b$ and
$d_L(f(\alpha\beta),ab)<\epsilon$. 
Here $d_L$ is a metric on $L$. 
It is easy to see that if $L$ is a geometric limit of a closed subgroup $H$ of a Lie group $G$
then $L$ is an intrinsic limit of $H$ in this sense. Moreover, this definition extends in an obvious way
to pairs $(H,K)$ with $K$ a closed subgroup of $H$ and gives a notion of limit of the homogeneous geometry $H/K$ independent
of an ambient geometry $G/R$.

\end{enumerate}

\subsection{Examples of limits of groups}
\label{sec:examples}
We now give some examples to demonstrate various possible behavior of limits.
In what follows we make use of three $1$-parameter subgroups of $SL(2,{\mathbb R})$:

$$D(t) =\left(\begin{array}{cc} \cosh(t) & \sinh(t)\\ \sinh(t) & \cosh(t) \end{array}\right),
  \qquad\qquad 
 R(t)=\left(\begin{array}{cc} \cos(t) & -\sin(t)\\ \sin(t) & \cos(t)\end{array}\right),
 \qquad\qquad 
 P(t)=\left(\begin{array}{cc} 1 & t\\ 0 & 1\end{array}\right),$$
 All groups in this section will be groups of matrices, described in terms of at most three parameters $t,s,u \in \RR$. Whenever one of these parameters is present, the reader is meant to take the union over such matrices for all possible values of the parameters. We will abuse notation as such throughout, because it is cumbersome to write the definitions properly using set notation.

\begin{enumerate}
\item[(1)] Let  $c_n = \begin{pmatrix} n & 0 \\ 0 & n^{-1} \end{pmatrix}$. 
Then as $n \to \infty$, the sequence of conjugates $c_n D(t) c_n^{-1} \subset \SL(2,{\mathbb R})$ converges (for all notions of limits introduced above) to the parabolic subgroup $P(t)$. 

\item[(2)] In $\SL(2,{\mathbb R})$ the sequence of conjugates $c_n R(t)c_n^{-1}$, with $c_n$ as in (1), converges geometrically to the parabolic subgroup $\pm P(t)$ with two connected components. This illustrates that the {\em geometric limit} of a connected group might not be connected.

\item[(3)] A subgroup of $\SL(3,{\mathbb R})$ containing non-diagonalizable
elements may have a {\em geometric limit} containing only diagonal elements:
$$\lim_{n\to \infty}\left(\begin{array}{ccc}
1/n & 0 & 0\\
0 & n & 0\\
0 & 0 & 1\end{array}\right)
\left(\begin{array}{ccc}
1 & t & 0\\
0 & 1 & 0\\
0 & 0 & e^{-2t}\end{array}\right)
\left(\begin{array}{ccc}
1/n & 0 & 0\\
0 & n & 0\\
0 & 0 & 1\end{array}\right)^{-1}=
\left(\begin{array}{ccc}
1 & 0 & 0\\
0 & 1 & 0\\
0 & 0 & e^{-2t}\end{array}\right)$$

\item[(4)] Next, we give an example of a one-dimensional group with a two-dimensional conjugacy limit. 
  Let $H$ be the 1-parameter closed subgroup of $\SL(4,{\mathbb R})$ defined by
 $$H=\left(\begin{array}{cc}
 P(t) &  0\\
 0 & R(t)
 \end{array}\right)$$
The geometric limit under conjugacy by
$c_n=\operatorname{diag}(n^{-1},n,1,1)$ is
two dimensional:
  $$\lim_{n\to\infty}c_n H c_n^{-1}=\left(\begin{array}{cc}
 P(s) & 0 \\
 0 & R(t)
 \end{array}\right),$$ where $t,s$ are independent parameters. The group $H$ is a one-parameter subgroup of ${\mathbb R}\times S^1$ that looks like
 a helix,
 and conjugating by $c_n$ coils the helix more tightly. The
{\em algebraic limit} is the one-dimensional group
$${a\text{-}\lim_{k\to\infty}}P_k H P_k^{-1}=\left(\begin{array}{cc}
 I & 0 \\
 0 & R(t)
 \end{array}\right)$$
 because the limit of Lie algebras is described by:
 $$\begin{pmatrix} 0 & n^{-1} t & & \\ 0 & 0 & & \\ & & 0 & -t \\ & & t & 0\end{pmatrix} \xrightarrow[n \to \infty]{} \begin{pmatrix} 0 & 0 & & \\ 0 & 0 & & \\ & & 0 & -t \\ & & t & 0\end{pmatrix}.$$
 In fact, the {\em local geometric limit} is also strictly smaller than the geometric limit; it coincides with the algebraic limit. This is because every element with non-trivial entries in the $P(s)$ block of the geometric limit comes from a sequence of elements of $H$ which go to infinity.

\item[(5)] To construct a limit of a connected group with infinitely many components, consider again the group from (4):
$$H=\left(\begin{array}{cc}
 P(t) & 0\\
 0 & R(t)   \end{array}\right).$$
 The {\em geometric limit} under conjugacy by the sequence $c_n = \operatorname{diag}(1,1,n,n^{-1})$ 
is
$$L=\left\{\left(\begin{array}{cc}
 P(N \pi) & 0\\
 0 &   (-1)^N P(t)  \\
\end{array}\right):\quad N\in{\mathbb Z},\quad t\in{\mathbb R}\right\}$$
and this has countably many components.

\item[(6)] Next, here is an example where the conjugacy limit of a group is a proper subgroup of itself.
Consider the group $L$ from (5). Now conjugate $L$ by the sequence $c_n=\operatorname{diag}(n,n^{-1},1,1)$. The limit is:
$$L' =\left(\begin{array}{cc}
 I_2 & 0\\
 0 & P(t)\end{array}\right).$$

\item[(7)] The following subgroup of $\GL(6,\RR)$ has infinitely many non-conjugate {\em geometric limits}. Fix $\alpha$ and define
$$H=H(s,t)=\left(\begin{array}{ccccc}
 e^s & 0 & 0 & 0 & 0\\
 0 & e^t & 0 & 0 & 0\\
 0 & 0 & 1 & s & t\\
 0 & 0 & 0 & 1 & 0\\
 0 & 0 & 0 & 0 & 1 \end{array}\right)\qquad 
 c_n=\left(\begin{array}{ccccc}
 1 & 0 & 0 & 0 & 0\\
 0 & 1 & 0 & 0 & 0\\
 0 & 0 & 1 & 0 & 0\\
 0 & 0 & 0 & n & \alpha n\\
 0 & 0 & 0 & n & n^{-1}+\alpha n \end{array}\right)$$
Then
 $$c_n H c_n^{-1}=
 \left(\begin{array}{ccccc}
 e^s & 0 & 0 & 0 & 0\\
 0 & e^t & 0 & 0 & 0\\
 0 & 0 & 1 & n^{-1}+n(\alpha s-t) & -n(\alpha s-t)\\
 0 & 0 & 0 & 1 & 0\\
 0 & 0 & 0 & 0 &1 \end{array}\right)$$
 The limit as $n\to\infty$ is the two-dimensional group
 $$
 L_{\alpha}=L_{\alpha}(s,u)=\left(\begin{array}{ccccc}
 e^s & 0 & 0 & 0 & 0\\
 0 & e^{\alpha s} & 0 & 0 & 0\\
 0 & 0 & 1 & u & -u\\
 0 & 0 & 0 & 1 & 0\\
 0 & 0 & 0 & 0 &1 \end{array}\right)$$
Consideration of the character shows that
if $\alpha\ne\beta$ then $L_{\alpha}$ is not conjugate to $L_{\beta}$.

\item[(8)] We modify the previous example to obtain algebraic limit groups.
For $\beta\in{\mathbb R}$ define a two-dimensional representation of $\RR^2$ by
$$\sigma_{\beta}:{\mathbb R}^2\longrightarrow \SL(2,{\mathbb R})\qquad \sigma_{\beta}(x,y)=\left
(\begin{array}{cc} 1 &  \beta x-y \\
0 & 1 \\
\end{array}\right),$$
and define also the two-dimensional representation of $\RR^2$, by 
$$\tau(s,t) = \begin{pmatrix} 1 & s & t\\ 0 & 1 & 0\\ 0 & 0 & 1\end{pmatrix}.$$
Now define a three-parameter algebraic subgroup $H(r,s,t)$ of $\SL(11,\RR)$ as a direct sum of representations
  $$H(r,s,t)=\sigma_0(r,s)\oplus\sigma_1(r,s)\oplus\sigma_2(r,s)\oplus \sigma_1(t,r)\oplus\tau(s,t).$$
 
Let $c_n = Id_9\oplus b_n$, where $b_n = \begin{pmatrix} n & \alpha n\\ n & n^{-1} + \alpha n \end{pmatrix}$. Then the limit of $H$ under conjugacy by $c_n$ is the algebraic group
$$L_{\alpha}(r,s,u)= \sigma_0(r,s)\oplus\sigma_1(r,s)\oplus\sigma_2(r,s)\oplus \sigma_1(\alpha s,r)\oplus\tau(u,-u).$$
We claim that the function which sends $\alpha\in{\mathbb R}$ to
the conjugacy class of $L_{\alpha}$ is finite to one.
This requires an invariant. 

Given  a unipotent representation
$\rho:{\mathbb R}^N\longrightarrow \SL(k,{\mathbb R})$ and $0\ne x\in{\mathbb R}^N$,  
the nullity of $(\rho(x)-Id)$ only depends
on $[x]\in{\mathbb R}P^{N-1}$. This defines a function $\mathcal N_{\rho}:{\mathbb R}P^{N-1}\longrightarrow \{0,1,\cdots,k\}$.
If $\rho$ and $\rho'$ have conjugate images there is a projective transformation $T\in \PGL(N,{\mathbb R})$
such that $\mathcal N_{\rho'}=\mathcal N_{\rho}\circ T$. 

Thinking of $L_\alpha$, in the above example, as a representation of $\RR^3$ in terms of the parameters $r,s,u$, we have that $\mathcal N_{L_{\alpha}}([r:s:u])=8$ iff $u=0$
and $[r:s]$ is one of four points $$[1:0],\  [1:1],\  [1:2],\  [1:\alpha]$$ on the projective line
$[*:*:0]$ in ${\mathbb R}P^2$. The cross ratio of $\{0,1,2,\alpha\}$, up to the action of the finite group that permutes these points, provides an invariant which shows there is a continuum of non-conjugate $L_\alpha$.
\end{enumerate}

\subsection{Properties of limits}
\label{sec:limit-props}

As we saw in the previous section, it is possible for the dimension of a subgroup to increase under taking limits. However, in the algebraic setting, this does not happen. Therefore, in this setting, the connected component of the geometric limit (the connected geometric limit) is equal to the Lie algebra limit (the group generated by taking the limit at the Lie algebra level first and then exponentiating).

\begin{proposition}\label{prop:limit-dimension} 
Let $G$ be an algebraic group (defined over $\CC$ or $\RR$). Suppose that $H$ is an algebraic subgroup and $L$ a conjugacy limit of $H$. Then $L$ is algebraic and $\dim L=\dim H$.
\end{proposition}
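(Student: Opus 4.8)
The plan is to sandwich $\dim L$ between $\dim H$ from below, via an infinitesimal (Lie algebra) argument, and from above, via a boundedness argument for limits of algebraic sets, and then to deduce algebraicity of $L$ from that of its Zariski closure. Write $H_n = c_n H c_n^{-1}$, so that $H_n \to L$ geometrically, and set $d = \dim H$. Since conjugation by a fixed element of $G$ is an algebraic automorphism, each $H_n$ is algebraic with $\dim H_n = d$, and $\frakh_n := \Ad(c_n)\frakh$ is a $d$-dimensional Lie subalgebra of $\frakg$. The Grassmannian $\operatorname{Gr}_d(\frakg)$ is compact, so after passing to a subsequence $\frakh_n \to \frakl$ for some $d$-dimensional subspace $\frakl \subset \frakg$, and continuity of the bracket makes $\frakl$ a subalgebra. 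For $X \in \frakl$, choosing $X_n \in \frakh_n$ with $X_n \to X$ gives $\exp(tX_n) \in H_n$ and $\exp(tX_n) \to \exp(tX)$, so $\exp(tX) \in L$ for all $t$ by the definition of geometric limit. Hence $\frakl \subseteq \operatorname{Lie}(L)$ and $\dim L \ge d$. (The failure of equality for non-algebraic $H$ is exactly the phenomenon in Example (4), where $\frakl$ is one-dimensional but $L$ is two-dimensional, because elements of $H$ escaping to infinity enlarge the geometric limit.)

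For the upper bound I would embed $G \subset \GL_N(\KK) \subset \mathbb{P}^{N^2}$ and observe that conjugation by $c_n$ is the restriction of a projective linear automorphism $\gamma_n$ of $\mathbb{P}^{N^2}$, so the projective closures $\overline{H_n} = \gamma_n(\overline H)$ all have the same dimension $d$ and the same degree $D := \deg \overline H$. The essential input is that the effective algebraic cycles of dimension $d$ and degree $D$ in $\mathbb{P}^{N^2}$ are parametrized by a projective, hence compact, Chow variety. Passing to a further subsequence, the cycles $[\overline{H_n}]$ converge therein to a cycle $Z$ of pure dimension $d$. The key compatibility, that the set-theoretic limit of the supports is contained in the support of the Chow limit, then yields $L \subseteq |Z| \cap \GL_N(\KK)$, an algebraic set of dimension at most $d$. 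Therefore $\dim L \le d$, and combined with the lower bound $\dim L = d = \dim H$.

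For algebraicity, let $\bar L$ be the Zariski closure of $L$ in $G$; it is an algebraic subgroup, and since $L \subseteq |Z|$ we get $\bar L \subseteq |Z|$, so $\dim \bar L \le d$, while $\dim \bar L \ge \dim L = d$, giving $\dim \bar L = d$. The Euclidean identity component $L_0$ is then a connected Lie subgroup of $\bar L$ of full dimension $d = \dim \bar L$, hence an open subgroup of the Euclidean identity component of $\bar L$ and, by connectedness, equal to it. Thus $L$ is a union of cosets of $\bar L_0$; since $L \subseteq \bar L$ has finitely many Euclidean components this union is finite, and being a finite union of translates of an algebraic set, $L$ is algebraic. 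Over $\CC$ the identity component is itself Zariski closed and this is immediate; over $\RR$ one must take the usual care that the relevant real-form subgroups are cut out algebraically, but the dimension count $\dim L = \dim H$ is unaffected.

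I expect the upper bound to be the crux. The honest difficulty is that a point of the Hausdorff limit $L$ is approximated by points of $H_n$ that need not lie on any fixed linear slice, so a naive B\'ezout count of a zero-dimensional linear section fails; one genuinely needs the properness of the space of bounded-degree cycles (or an equivalent o-minimality/semialgebraic boundedness statement over $\RR$) to convert ``bounded degree and fixed dimension'' into ``limit of dimension at most $d$.'' By contrast, the lower bound and the passage from the dimension equality to algebraicity are comparatively routine.
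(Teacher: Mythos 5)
Your proposal is correct in substance but takes a genuinely different route from the paper's. For the crucial upper bound $\dim L \le \dim H$, the paper argues by contradiction with an intersection count: if $\dim L > \dim H$, the number of connected components of $H_n \cap U$ in a fixed identity neighborhood $U$ must tend to infinity, yet a slice $V$ of complementary dimension, transverse to the $H_n$, can meet them in only boundedly many points because $\deg V$ and $\deg H_n = \deg H$ are constant. You instead invoke compactness of the Chow variety of dimension-$d$, degree-$D$ cycles, extract a limit cycle $Z$ with $L \subseteq |Z|$, and read off the dimension bound (and, over $\CC$, algebraicity) directly from $Z$. Both arguments rest on the same principle --- bounded degree plus fixed dimension is a properness condition --- and indeed the paper's algebraicity step cites exactly this compactness in the form of Tworzewski--Winiarski/Bishop, so your route consolidates into a single application of compactness what the paper splits into an elementary B\'ezout-type count plus a citation. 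You also supply two things the paper leaves implicit or outsources: an explicit lower bound $\dim L \ge \dim H$ via Grassmannian convergence of $\Ad(c_n)\frakh$ (the Lie-algebra limit of Section~\ref{sec:other-notions}, which the paper's proof never writes down even though the proposition asserts equality of dimensions), and a group-theoretic derivation of algebraicity from $\dim \bar L = \dim L$ (identity components coincide, finitely many cosets) rather than quoting that locally uniform limits of bounded-degree algebraic sets are algebraic.

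The one soft spot is the real case of algebraicity, which you flag but do not resolve. Over $\RR$ the Euclidean identity component of a real algebraic group need not be Zariski closed (e.g.\ $\RR_{>0} \subset \GL_1\RR$), so ``finite union of translates of an algebraic set'' does not follow formally from your coset decomposition $L = \bigcup_i g_i \bar L_0$; one must still argue that the particular union that occurs is Zariski closed, say by complexifying and intersecting with the complex limit. Note that the purely set-theoretic statement genuinely fails over $\RR$: Hausdorff limits of bounded-degree real algebraic sets need not be algebraic (the parabolas $y^2 = \eps x$ limit to a half-line as $\eps \to 0^+$), so the group structure or complexification is truly needed here --- your instinct to run the cycle-theoretic upper bound through the complexification is the right one, and it does make your dimension count complete over $\RR$. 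You are in good company on the remaining point: the paper dispatches the real case of algebraicity in one sentence (``by complexification''), so this is not a gap relative to the paper's own standard of detail, but it is the step to flesh out if you want a complete argument.
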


\begin{proof} Suppose $c_n\in G$ and $H_n=c_n^{-1}Hc_n$ converges in the
Chabauty topology to a subgroup $L$. Assume, for contradiction, that $\dim L>\dim H$. Then for every neighborhood
$U$ of the identity in $G$ the number of connected components of
$U\cap H_n$ goes to infinity as $n \to \infty$. Let $V$ be a variety of dimension $\dim G-\dim H$ passing through the identity, and smooth there. Choose $V$ so
that it is transverse to each $H_n$ (for $n$ sufficiently large) in a fixed neighborhood $U$ of the identity. The set $V\cap H_n \cap U$ is a finite set of points, with cardinality going to infinity as $n$ goes to infinity. However, this is impossible, because the degree of the variety $V$ is constant and the degree of the varieties $H_n$ is also constant equal to that of $H$. Therefore the degree of $V \cap H_n$ is bounded and the cardinality of the finite sets $V \cap H_n \cap U$ must also be bounded.

In the case $G$ is defined over $\CC$, the fact that the limit group $L$ is algebraic follows from a theorem of Tworzewski and Winiarski \cite{ Tworzewski} using work of Bishop \cite{bishop}. They showed that the set of all pure dimensional algebraic subsets of ${\mathbb C}^n$ of bounded degree is compact in the topology of local uniform convergence. From this
one may deduce the 
case $G$ is defined over $\RR$ by complexification.

\end{proof}
\noindent
 Note that Proposition~\ref{prop:limit-dimension} applies also when $G$ is the connected component of an algebraic group. In particular Proposition~\ref{prop:limit-dimension} holds when $G$ is the general or special linear group over $\RR$ or $\CC$, and also when $G = \PGL(n,\RR)$, which are the main cases of interest in this article.
 
Next, we investigate the behavior of multiple limits taken in succession, with the goal of showing that the relation ``$L$ is a limit of $H$" induces a partial order of the space of closed subgroups of $G$.
To begin, we study the behavior of the normalizer. Given a closed subgroup $H$ of $G$, let $H_0$ denote its identity component. The normalizer of $H_0$ in $G$ will be denoted $N_G(H_0)$.

\begin{proposition}\label{prop:normalizer-dim}Let $G$ be an algebraic Lie group (defined over $\CC$ or $\RR$), let $H$ be an algebraic subgroup and let $L$ be any limit of $H$. Then $\dim N_G(H_0)\le\dim N_G(L_0)$ with
equality if and only if $L$ and $H$ are conjugate.
\end{proposition}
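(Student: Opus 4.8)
The plan is to translate the normalizer dimensions into codimensions of orbits of an algebraic group action, where the desired inequality and the equality criterion both become statements about orbit dimensions inside an orbit closure. Write $d=\dim H$. Since $H_0$ is connected, its normalizer is exactly the $\Ad$-stabilizer of the subalgebra $\frakh=\mathrm{Lie}(H_0)$, viewed as a point $[\frakh]$ of the Grassmannian $\mathrm{Gr}_d(\frakg)$ of $d$-planes in $\frakg=\mathrm{Lie}(G)$; that is, $N_G(H_0)=\{g\in G:\Ad(g)\frakh=\frakh\}$. As $G$ acts algebraically on $\mathrm{Gr}_d(\frakg)$ through $\Ad$, the orbit $\mathcal{O}_H:=G\cdot[\frakh]$ is a locally closed submanifold with $\dim\mathcal{O}_H=\dim G-\dim N_G(H_0)$, and likewise for $L$.

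For the inequality I would argue as follows. By Proposition~\ref{prop:limit-dimension}, $L$ is algebraic with $\dim L=d$, so $L_0$ again has a $d$-dimensional Lie algebra $\frakl$. Choose $c_n\in G$ with $c_nHc_n^{-1}\to L$ geometrically. Because the subalgebras $\Ad(c_n)\frakh$ all have the fixed dimension $d$, their limit as Lie algebras (the Lie algebra limit of Section~\ref{sec:other-notions}(4)) is precisely $\frakl$; hence $\Ad(c_n)[\frakh]\to[\frakl]$ in $\mathrm{Gr}_d(\frakg)$ and $[\frakl]\in\overline{\mathcal{O}_H}$. For a (real-)algebraic action the frontier $\overline{\mathcal{O}_H}\setminus\mathcal{O}_H$ is a union of orbits of strictly smaller dimension, so $\dim\mathcal{O}_L\le\dim\mathcal{O}_H$, which is exactly $\dim N_G(H_0)\le\dim N_G(L_0)$. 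An alternative, more hands-on route to the same inequality is to apply Proposition~\ref{prop:limit-dimension} to the conjugates $c_nN_G(H_0)c_n^{-1}$, whose geometric limit $M$ has $\dim M=\dim N_G(H_0)$, and then to check directly that $M$ normalizes $L_0$.

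Turning to the equality case, equality of normalizer dimensions is equivalent to $\dim\mathcal{O}_L=\dim\mathcal{O}_H$, which by the frontier-dimension property holds if and only if $[\frakl]$ lies in the orbit $\mathcal{O}_H$ itself rather than in its lower-dimensional boundary, i.e. if and only if $\frakl=\Ad(g)\frakh$ for some $g\in G$, i.e. $L_0=gH_0g^{-1}$. To promote the conjugacy of identity components to conjugacy of the full groups, I would, after replacing $H$ by a conjugate, assume $L_0=H_0$. Then $\Ad(c_n)[\frakh]\to[\frakh]$ within the single orbit $\mathcal{O}_H\cong G/N_G(H_0)$, so the convergent sequence of cosets lifts through the fibration $G\to G/N_G(H_0)$ to a factorization $c_n=g_nn_n$ with $g_n\to e$ and $n_n\in N_G(H_0)$. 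Hence $L=\lim_n g_n(n_nHn_n^{-1})g_n^{-1}=\lim_n n_nHn_n^{-1}$, and when $H$ is connected one has $n_nHn_n^{-1}=H$, so $L=H$ and the two are conjugate.

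The main obstacle is the bookkeeping that transfers the Chabauty (analytic) limit into the algebraic orbit picture. Two points need care: that $\frakl=\mathrm{Lie}(L_0)$ really is the Grassmannian limit of the $\Ad(c_n)\frakh$ --- this is exactly where the equality $\dim L=\dim H$ from Proposition~\ref{prop:limit-dimension} is indispensable, since it forbids a dimension jump --- and that the analytic closure of $\mathcal{O}_H$ enjoys the frontier-dimension property over $\RR$. For the latter I would invoke local closedness of orbits of real-algebraic group actions (so that $\mathcal{O}_H$ is a smooth submanifold whose frontier is a finite union of strictly lower-dimensional orbits), complexifying and using the compactness results of Bishop and Tworzewski--Winiarski cited in the proof of Proposition~\ref{prop:limit-dimension} if a purely real statement is wanted. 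Finally, I note that the factorization step closes cleanly to give $L\sim H$ when $H$ is connected; for disconnected $H$ it shows only that $L$ is a limit of $H$ under conjugacy within $N_G(H_0)$, and the remaining control of the finite component group $H/H_0$ is the one extra ingredient the general case requires.
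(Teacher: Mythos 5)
Your proof is correct and takes essentially the same route as the paper: the paper works with the point $[\frakh]$ in $\mathbb P(\Lambda^k\frakg)$ via the Pl\"ucker embedding rather than in the Grassmannian, uses Proposition~\ref{prop:limit-dimension} in exactly the way you do to get $\dim\frakl=\dim\frakh$ and convergence $\Ad(c_n)\frakh\to\frakl$, and closes with the same frontier-dimension property of the adjoint orbit (semi-algebraicity via Tarski--Seidenberg over $\RR$), so that $\dim N_G(\frakl)=\dim G-\dim G\cdot\frakl>\dim G-\dim G\cdot\frakh$ off the orbit. Your extra care in the equality case is in fact warranted: the paper simply asserts that $\frakl\in G\cdot\frakh$ implies $L$ is conjugate to $H$, which glosses over precisely the component-group issue you isolate (for disconnected $H$ a limit can shed components while keeping $L_0$ conjugate to $H_0$ --- e.g.\ the normalizer of a torus limiting onto the torus itself --- so the ``only if'' really requires connectedness or additional control of $H/H_0$); since the paper applies the proposition (in Theorem~\ref{thm:partial-order}) only to connected subgroups, your fibration-lifting argument through $G\to G/N_G(H_0)$ fully recovers everything that is actually used.
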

\begin{proof}
Let $\frakh$ and $\frakl$ denote the Lie algebras of $H$ and $L$ respectively. Then the normalizers $N_G(H_0)$ and $N_G(L_0)$ are equal to the normalizers of the respective Lie algebras $N_G(\frakh)$ and $N_G(\frakl)$.
By Proposition~\ref{prop:limit-dimension}, $\dim \frakh = \dim \frakl =: k$. So $\frakh$ and $\frakl$ define points in the projectivization $\mathbb P V$ of the $k^{th}$ exterior power $V = \Lambda^k \frakg$.
The orbit $G \cdot \frakh$ under the adjoint action of $G$ is a smooth subset of $\mathbb P V$ corresponding to the Lie algebras of conjugates of $H$.  The closure (in the classical topology) $\overline{G \cdot \frakh}$, which contains $\frakl$, is a union of $G \cdot \frakh$ and orbits of strictly smaller dimension (in the case that $G$ is defined over $\RR$, this follows because the orbit ${G \cdot \frakh}$ is semi-algebraic by the Tarski-Seidenberg theorem). 
 If $\frakl \in G \cdot \frakh$, then $L$ is conjugate to $H$ and, of course, $N_G(L_0)$ is conjugate to $N_G(H_0)$. Assume then that $\frakl \in \overline{G \cdot \frakh} \setminus G \cdot \frakh$, so that $\dim G \cdot \frakl < \dim G \cdot \frakh$. Then
\begin{align*}
\dim N_G(\frakl) &= \dim G - \dim G \cdot \frakl \\ 
& > \dim G - \dim G \cdot \frakh = \dim N_G(\frakh).
\end{align*}
\end{proof}

Let $\Grp(G)$ denote the set of conjugacy classes of connected, algebraic,
 Lie subgroups of an algebraic Lie group $G$. If $L$ is the connected geometric limit of $H$ under some sequence of conjugacies (so $L$ is the identity component of a limit of $H$), then
we write $H\rightarrow_0 L$. 

\begin{theorem}\label{thm:partial-order} Let $G$ be an algebraic Lie group. The relation of being a connected geometric limit induces a partial order on the connected, algebraic, sub-groups $\Grp(G)$. Moreover
the length of every chain is at most $\dim G$.
\end{theorem}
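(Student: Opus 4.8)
first that the relation $\to_0$ is a partial order (reflexive, antisymmetric, transitive), and then that chains have length bounded by $\dim G$. The key technical input is Proposition~\ref{prop:normalizer-dim}, which gives a strictly monotone integer-valued function along non-trivial limits, and this will drive both halves of the argument.

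\textbf{Partial order.} Reflexivity is immediate, since $H$ is a (trivial) conjugacy limit of itself. For antisymmetry and transitivity I would argue at the level of conjugacy classes in $\Grp(G)$. Transitivity is the crux: I need to know that if $H \to_0 M$ and $M \to_0 L$, then $H \to_0 L$. The natural approach is to use the orbit-closure picture from the proof of Proposition~\ref{prop:normalizer-dim}: the Lie algebras $\frakh, \frakm, \frakl$ define points in $\mathbb P(\Lambda^k \frakg)$ (all of the same dimension $k$ by Proposition~\ref{prop:limit-dimension}), and ``$M$ is a connected limit of $H$'' should correspond to $\frakm \in \overline{G \cdot \frakh}$. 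Granting this correspondence, transitivity follows from the topological fact that orbit closures are nested: $\frakl \in \overline{G\cdot \frakm} \subset \overline{\overline{G\cdot \frakh}} = \overline{G\cdot \frakh}$, so $\frakl \in \overline{G\cdot \frakh}$ and hence $L$ is a connected limit of $H$. For antisymmetry, suppose $H \to_0 L$ and $L \to_0 H$. By Proposition~\ref{prop:normalizer-dim} each non-trivial limit strictly increases $\dim N_G(\cdot_0)$, so $\dim N_G(H_0) \le \dim N_G(L_0) \le \dim N_G(H_0)$, forcing equality, and the equality clause of that proposition gives that $H$ and $L$ are conjugate, i.e.\ equal in $\Grp(G)$.

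\textbf{Chain length bound.} Given a strictly increasing chain $H_0 \to_0 H_1 \to_0 \cdots \to_0 H_m$ of distinct conjugacy classes, each successive step is a non-trivial limit, so by the strict inequality in Proposition~\ref{prop:normalizer-dim} the integers $\dim N_G(({H_i})_0)$ are \emph{strictly} increasing. These integers all lie in the range $\{0, 1, \ldots, \dim G\}$, and the normalizer of the trivial subgroup (or any subgroup) has dimension at most $\dim G$; the minimum possible value along the chain is at least $\dim H_0 \ge 0$. A strictly increasing sequence of integers in an interval of length $\dim G$ can have at most $\dim G + 1$ terms, giving $m \le \dim G$, so every chain has length at most $\dim G$.

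\textbf{The main obstacle} I anticipate is justifying precisely the correspondence between the relation $\to_0$ and membership in the adjoint orbit closure $\overline{G \cdot \frakh}$ in $\mathbb P(\Lambda^k \frakg)$. One direction is essentially the content of the proof of Proposition~\ref{prop:normalizer-dim} (a connected limit yields a point in the orbit closure), but the converse---that every point of $\overline{G\cdot \frakh}$ of the same dimension is realized as an actual connected geometric limit under a sequence of conjugacies---requires a little care, since Chabauty convergence of the groups must be extracted from convergence of the Lie algebras. Here I would invoke Proposition~\ref{prop:limit-dimension} to ensure no dimension jump, so that convergence of subalgebras in $\mathbb P(\Lambda^k\frakg)$ upgrades to convergence of the identity components in the Chabauty topology, matching the definition of $\to_0$. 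Once this dictionary is in place, transitivity becomes the purely topological statement that orbit closures are transitive under inclusion, and the rest of the argument is bookkeeping.
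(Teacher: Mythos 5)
Your proof is correct, but your route to transitivity is genuinely different from the paper's. The paper proves transitivity by a diagonal argument directly in the Chabauty space $\mathcal C(G)$: conjugation $\theta_m(P)=b_mPb_m^{-1}$ is a homeomorphism of $\mathcal C(G)$, so $\theta_m(H_n)\to L_m$ as $n\to\infty$, and one extracts a diagonal subsequence $\theta_{m_n}(H_n)\to K$ (using that $\mathcal C(G)$ is compact and metrizable). That argument needs no algebraicity and works for arbitrary closed subgroups. You instead pass to the adjoint orbit picture in $\mathbb{P}(\Lambda^k\frakg)$ and reduce transitivity to the purely topological fact that orbit closures are nested, $\overline{G\cdot\mathfrak{m}}\subset\overline{G\cdot\frakh}$ whenever $\mathfrak{m}\in\overline{G\cdot\frakh}$. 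You correctly identify the cost: you must establish the dictionary that, for algebraic subgroups, $H\rightarrow_0 L$ is equivalent to $\frakl\in\overline{G\cdot\frakh}$, and both directions lean on Proposition~\ref{prop:limit-dimension}. The forward direction: any Grassmannian-convergent subsequence $\Ad_{c_n}\frakh\to\mathfrak{m}'$ satisfies $\mathfrak{m}'\subset\operatorname{Lie}(L')$ for the Chabauty limit $L'$ (one-parameter subgroups of limits of the $\Ad_{c_n}\frakh$ lie in $L'$), and equality of dimensions forces $\mathfrak{m}'=\operatorname{Lie}(L')$. The converse: if $\Ad_{c_n}\frakh\to\frakl$, pass to a Chabauty-convergent subsequence $c_nHc_n^{-1}\to L'$; then $\frakl\subset\operatorname{Lie}(L')$ and $\dim L'=\dim H$ by Proposition~\ref{prop:limit-dimension}, so $L'_0$ is the connected group with Lie algebra $\frakl$, which for connected algebraic subgroups pins down the conjugacy class. (Note that points of $\overline{G\cdot\frakh}$ are automatically decomposable, since the Grassmannian is closed in the Pl\"ucker embedding, and limits of subalgebras are subalgebras; in any case for transitivity you only need the specific $\frakl$, which is a subalgebra by hypothesis.) What each approach buys: the paper's diagonal argument is shorter and more general, whereas your dictionary handles the identity-component bookkeeping in iterated limits more transparently, avoids the metrizability/diagonal extraction, and makes explicit a correspondence the paper uses implicitly elsewhere (e.g.\ in the proof of Proposition~\ref{prop:charpoly}, where Chabauty convergence of algebraic groups is upgraded to Lie algebra convergence). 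Your antisymmetry argument and the chain-length bound via the strictly increasing normalizer dimension $\dim N_G((H_i)_0)\in\{0,\ldots,\dim G\}$ coincide with the paper's use of Proposition~\ref{prop:normalizer-dim}, and your bookkeeping there is sound.
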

\begin{proof} It follows from Proposition~\ref{prop:normalizer-dim} that $H\rightarrow_0 L$ and $L\rightarrow_0 H$ implies $L$ is 
conjugate to $H$.
It remains to show transitivity. Suppose $\lim H_n= L$ and $K=\lim L_m$ with 
$H_n=a_nHa_n^{-1}$ and $L_m=b_mLb_m^{-1}$. 
Let $\mathcal C(G)$ denote the closed subgroups of $G$, equipped with the Chabauty topology (i.e. the topology of Hausdorff convergence in compact sets). The map $\theta_m:{\mathcal C}(G)\longrightarrow{\mathcal C}(G)$ given by $\theta_m(P)=b_mPb_m^{-1}$ is continuous.
Hence $\lim_{n\to\infty}\theta_m(H_n)=L_m$ in ${\mathcal C}(G)$. It follows there is a sequence $\theta_{m_n}(H_n)$
which converges to $K$ in ${\mathcal C}(G)$.
\end{proof}

Now we restrict our attention to the case when  $G $ is locally isomorphic to $\GL_n \RR$. If $L$ is a limit of $H$, the eigenvalues of elements of $L$ are related to those of $H$. 
This leads to an obstruction to $L$ being a limit of $H$. 
The idea is that
 under degeneration eigenvalues either are unchanged or degenerate.
  
An element $A$ of $\frakgl_N = \operatorname{End}(\RR^n)$ has a well defined
characteristic polynomial, denoted $\charpoly(A)$.
Given a Lie sub-algebra $\frakh$ of $\frakgl$, we denote by $\charpolys(\frakh)$ the closure of the subset
of ${\mathbb R}[x]$ consisting of characteristic polynomials of all elements in $\mathfrak{h}$. 
Thus $\charpolys(\frakh)$ is closed and invariant under conjugation of $\frakh$.

\begin{proposition}
\label{prop:charpoly}
 Suppose $H$ is a closed algebraic subgroup of $\GL_n \RR$, and $L$ is a conjugacy limit of $H$. Then 
$\charpolys(\frakl)\subset \charpolys(\frakh)$, where $\frakh, \frakl \subset \frakgl(n)$ denote the Lie algebras of $H$ and $L$ respectively.
\end{proposition}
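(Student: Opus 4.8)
The plan is to reduce everything to the convergence of Lie algebras as linear subspaces, exploiting two facts: the characteristic polynomial is invariant under conjugation, and its coefficients depend continuously on the matrix entries. Write $L$ as a conjugacy limit through a sequence $c_n \in G$, so that the conjugates $c_n H c_n^{-1}$ converge to $L$ in the Chabauty topology, and set $\frakh_n := \Ad(c_n)\frakh = c_n \frakh c_n^{-1}$. Since conjugation preserves characteristic polynomials, $\charpolys(\frakh_n) = \charpolys(\frakh)$ for every $n$; the content of the proposition is therefore entirely about how $\frakl$ sits relative to the sequence $\frakh_n$.

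The crucial input, which I expect to be the main obstacle, is the identification $\frakl = \lim_n \frakh_n$ as subspaces. I would argue this as follows. By Proposition~\ref{prop:limit-dimension} we have $\dim \frakl = \dim \frakh =: k$, so $\frakl$ and all the $\frakh_n$ are points of the compact Grassmannian $\operatorname{Gr}_k(\frakg)$, where $\frakg = \frakgl_n\RR$. Suppose $\frakh_{n_j} \to \frakl'$ along some subsequence. For any $A \in \frakl'$ I can choose $A_{n_j} \in \frakh_{n_j}$ with $A_{n_j} \to A$; then $\exp(t A_{n_j}) \in c_{n_j} H c_{n_j}^{-1}$ converges to $\exp(tA)$ for each $t$, and since the conjugates converge geometrically to $L$, we get $\exp(tA) \in L$ for all $t$, i.e. $A \in \frakl$. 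Hence $\frakl' \subseteq \frakl$, and equality of dimensions forces $\frakl' = \frakl$. As every subsequence of $(\frakh_n)$ admits a further subsequence converging to $\frakl$ and $\operatorname{Gr}_k(\frakg)$ is compact Hausdorff, the full sequence converges: $\frakh_n \to \frakl$. It is precisely here that algebraicity is indispensable — Example~(4) shows that without the dimension bound of Proposition~\ref{prop:limit-dimension} the geometric limit can be strictly larger than the Lie algebra limit, and then elements of $\frakl$ need not be approximable by elements of $\frakh_n$.

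With the subspace convergence established, the remainder is routine. Given any $A \in \frakl$, convergence in the Grassmannian lets me pick $A_n \in \frakh_n$ with $A_n \to A$, and continuity of the characteristic polynomial gives $\charpoly(A_n) \to \charpoly(A)$. Since $\charpoly(A_n) \in \charpolys(\frakh_n) = \charpolys(\frakh)$ and $\charpolys(\frakh)$ is closed by definition, we conclude $\charpoly(A) \in \charpolys(\frakh)$. As $A \in \frakl$ was arbitrary, the set $\{\charpoly(A) : A \in \frakl\}$ lies in $\charpolys(\frakh)$, and taking closures (using again that $\charpolys(\frakh)$ is closed) yields $\charpolys(\frakl) \subseteq \charpolys(\frakh)$, as desired.
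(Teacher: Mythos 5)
Your proof is correct and follows essentially the same route as the paper: conjugation-invariance of the characteristic polynomial, continuity of its coefficients, and closedness of $\charpolys(\frakh)$, all hinging on the Lie-algebra convergence $\Ad_{c_n}\frakh \to \frakl$. The only difference is that the paper asserts this convergence in one line (``since $H$ is an algebraic subgroup''), whereas you supply the justification in full via Proposition~\ref{prop:limit-dimension} and compactness of the Grassmannian --- a worthwhile elaboration, but the same argument.
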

\begin{proof} Suppose $p(x) = \charpoly(\ell)$ for some $\ell \in \frakl$. By assumption we have that $c_k H c_k^{-1} \to L$ and since $H$ is an algebraic subgroup, we have convergence at the Lie algebra level as well: $\Ad_{c_k} \frakh \to \frakl$. Hence, there exists a sequence $h_{k} \in \frakh$ so that $\Ad_{c_k} h_{k} \to \ell$ as $k \to \infty$. The characteristic polynomials  $\charpoly(h_{k}) = \charpoly(\Ad_{c_k} h_{k})$ then converge to $\charpoly(\ell)$ and therefore $\charpoly(\ell) \in \charpolys(\frakh)$ since $\charpolys(\frakh)$ is closed. 
\end{proof}
\noindent
For example, if $P(t)$ and $R(t)$ are the one parameter subgroups of $\GL_2\RR$ described in Section~\ref{sec:examples}, and $\mathfrak{p}$ and $\mathfrak{r}$ denote their respective Lie algebras, then $\charpolys(\mathfrak{r})=\{x^2 + \theta^2: \theta \in \RR\}$ while $\charpolys(\frakp) = \{x^2\}$. Proposition~\ref{prop:charpoly} implies that $R(t)$ is not a limit of $P(t)$. This also follows from Theorem~\ref{thm:partial-order} because $P(t)$ is a (connected) limit of $R(t)$ and they are not conjugate (see example (2) from Section~\ref{sec:examples}).

We conclude this section by applying Proposition~\ref{prop:charpoly} to prove that $\HH^2 \times \RR$ geometry is not contained in any limit of hyperbolic geometry, when both are considered as sub-geometries of projective geometry. A more general statement on which Thurston geometries can arise as limits of hyperbolic geometry  will be given in Theorem~\ref{cor:Thurston-geoms_intro}.
\begin{proposition}\label{prop:H2timesR}
$\Isom_+({\mathbb H}^2\times{\mathbb R})$ is not a subgroup of a
limit of $\PO(3,1)$ in $\GL(4,{\mathbb R})$ and therefore $\HH^2 \times \RR$ is not a sub-geometry of any limit of hyperbolic geometry inside of projective geometry.\end{proposition}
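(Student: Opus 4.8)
The plan is to invoke the characteristic-polynomial obstruction of Proposition~\ref{prop:charpoly}. Lift $\PO(3,1)$ to $\OO(3,1)\subset\GL_4\RR$, so that the relevant Lie algebra is $\frakh=\mathfrak{so}(3,1)$, and let $\frakl$ be the Lie algebra of a conjugacy limit $L$ of $\OO(3,1)$. Proposition~\ref{prop:charpoly} gives $\charpolys(\frakl)\subseteq\charpolys(\frakh)$. Hence it suffices to exhibit one element of $\mathfrak{isom}_+(\HH^2\times\RR)$ whose characteristic polynomial is not in $\charpolys(\mathfrak{so}(3,1))$: if $\Isom_+(\HH^2\times\RR)$ were contained in $L$, then $\mathfrak{isom}_+(\HH^2\times\RR)\subseteq\frakl$, forcing the characteristic polynomials of all its elements to lie in $\charpolys(\frakh)$.

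I would first describe $\charpolys(\mathfrak{so}(3,1))$. Every $A\in\mathfrak{so}(3,1)$ satisfies $A^{T}J=-JA$ for the form $J$ of signature $(3,1)$, so $A^{T}=J(-A)J^{-1}$; thus $A$ and $-A$ share a characteristic polynomial, the spectrum of $A$ is invariant under $\mu\mapsto-\mu$, and $\charpoly(A)$ is an \emph{even} polynomial (no odd-degree terms). Since being even is a closed linear condition, every polynomial in the closure $\charpolys(\mathfrak{so}(3,1))$ is even as well. So it is enough to find an element of $\mathfrak{isom}_+(\HH^2\times\RR)$ whose characteristic polynomial has a nonzero odd-degree coefficient.

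Next I would pin down the realization of $\mathfrak{isom}_+(\HH^2\times\RR)=\mathfrak{sl}_2\RR\oplus\RR$ in $\mathfrak{gl}_4\RR$. Because conjugation preserves determinant, the conjugates of $\OO(3,1)$ and hence their limit $L$ consist of matrices of determinant $\pm1$, so $\frakl\subseteq\mathfrak{sl}_4\RR$; therefore only the \emph{trace-free} model of $\mathfrak{isom}_+(\HH^2\times\RR)$ could possibly embed in $\frakl$. Now $\Isom_+(\HH^2\times\RR)=\PSL_2\RR\times\RR$, and a finite-dimensional representation of $\PSL_2\RR$ on $\RR^4$ decomposes into odd-dimensional irreducibles (only these factor through $\PSL_2\RR$); realizing the $\HH^2$ factor via $\SO(2,1)_0$ forces the decomposition $\RR^4=V_3\oplus V_1$, with $V_3$ the $3$-dimensional irreducible carrying the form of signature $(2,1)$. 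The central $\RR$ (the $\RR$-translations of $\HH^2\times\RR$) must centralize $\SO(2,1)_0$, so by Schur its generator has the form $\operatorname{diag}(aI_3,b)$; non-triviality of its action together with the trace-free condition $3a+b=0$ forces $b=-3a$ with $a\neq0$, i.e. the generator is proportional to $Z=\operatorname{diag}(1,1,1,-3)$.

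Finally, $Z$ itself is the witness: its eigenvalues $\{1,1,1,-3\}$ are not symmetric under negation, and $\charpoly(Z)=(x-1)^3(x+3)=x^4-6x^2+8x-3$ has nonzero $x^1$-coefficient, so it is not even and hence $\charpoly(Z)\notin\charpolys(\mathfrak{so}(3,1))$. This contradicts the containment above and proves the group statement; the geometry statement follows because realizing $\HH^2\times\RR$ as a subgeometry of a limit of hyperbolic geometry requires its structure group $\Isom_+(\HH^2\times\RR)$ to be contained in the corresponding limit group $L$. I expect the main obstacle to be the bookkeeping of the third paragraph rather than the final computation: one must argue that the constraint $\frakl\subseteq\mathfrak{sl}_4\RR$ leaves the $V_3\oplus V_1$ trace-free model as the only candidate, so that the translation generator genuinely has eigenvalues $\{1,1,1,-3\}$ up to scale and cannot be massaged into an even characteristic polynomial.
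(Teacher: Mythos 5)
Your proposal is correct and is essentially the paper's proof: both hinge on the characteristic-polynomial obstruction of Proposition~\ref{prop:charpoly} applied to the same matrix realization of $\Isom_+(\HH^2\times\RR)$, and your witness $Z=\operatorname{diag}(1,1,1,-3)$ is exactly the element $t=1$, $a=0$ of the Lie algebra $\mathfrak{isom}_+(\HH^2\times\RR)$ written down in the paper. The differences are cosmetic or mildly strengthening: where the paper lists both sets $\charpolys(\mathfrak{isom}_+(\HH^2\times\RR))$ and $\charpolys(\frakso(3,1))$ and concludes ``by inspection,'' you isolate the cleaner closed criterion that every polynomial in $\charpolys(\frakso(3,1))$ is even, and you add a short representation-theoretic argument (the $V_3\oplus V_1$ decomposition under $\PSL_2\RR$, Schur's lemma for the central generator, and the trace-free constraint coming from $\frakl\subset\mathfrak{sl}_4\RR$) showing that the matrix realization is essentially the only candidate --- a point the paper handles implicitly by simply fixing the standard projective realization.
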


\begin{proof}
The (almost) embedding of $\HH^2 \times \RR$ geometry in $\RP^3$ geometry represents the isometries of $\HH^2 \times \RR$ which preserve the orientation of the $\RR$ direction as
$$\Isom_+({\mathbb H}^2\times{\mathbb R})=\left\{\left(\begin{array}{cc} e^t A & 0\\ 0 & e^{-3t}
\end{array}\right):\quad A\in SO(2,1), t \in \RR\right\}.$$
The Lie algebra is described by 
$$ \mathfrak{isom}_+(\HH^2 \times \RR) = \left\{\left(\begin{array}{cc} t I_3 + a & 0\\ 0 & -3t \end{array}\right):\quad a\in \frakso(2,1), t \in \RR \right\}.$$
 The eigenvalues of elements of this Lie sub-algebra of $\frakgl(4)$ are of the form  $t,t+\lambda,t-\lambda,-3t$ with $\lambda^2\in{\mathbb R}$.
The set of characteristic polynomials is:
$$\charpolys( \mathfrak{isom}_+(\HH^2 \times \RR)) = \{ (x-t)((x-t)^2-\lambda^2)(x+3t):t,\lambda^2\in{\mathbb R}  \}.$$
 On the other hand, the isometries of $\HH^3$ in the projective model are $\PO(3,1)$ and
 $$\charpolys(\frakso(3,1))=\{(x^2-\lambda^2)(x^2+\theta^2):\ \lambda,\theta\in{\mathbb R}\}.$$
 Inspection of these two sets and an application of Proposition~\ref{prop:charpoly} proves the claim.
\end{proof}

\section{Symmetric subgroups}
\label{sec:symmetric-subgroups}

We turn now to a special class of Lie groups and their subgroups, namely semi-simple Lie groups $G$ with finite center and their symmetric subgroups $H$. We will give an explicit description of the conjugacy limits of such symmetric subgroups and a more descriptive version of Theroem~\ref{thm:symmetric-limits} in Section~\ref{sec:symm_general}. 
Then, Sections~\ref{sec:symmetric-PGL}--\ref{sec:Opq} are dedicated to symmetric subgroups of the (projective) general linear group and their limits.

\subsection{Symmetric subgroups in a semi-simple Lie group}\label{sec:symm_general}
Let $G$ be a connected semi-simple Lie group of non-compact type and with finite center.
Let $\sigma: G \to G$ be an involutive automorphism, i.e. $\sigma$ is a continuous automorphism with $\sigma^2 = 1$. 
The subset of
fixed points
$$G^\sigma =\{g\in G\ :\ \sigma(g)=g\ \}$$
is a closed subgroup of $G$. 
A closed group $H$ with $G^\sigma_0 \subset H \subset G^\sigma$, where $G^\sigma_0$ denotes the connected component of the identity, is called a \emph{symmetric subgroup} of $G$. 
The quotient space $G/H$ is an \emph{affine symmetric} space. 
Let us give some examples of affine symmetric spaces: 
\begin{itemize}
\item When $H = K$ is a maximal compact subgroup, then $G/K$ is a Riemannian symmetric space. 
\item When $G = L\times L$ and $H = \mathrm{Diag}(L)$ is the diagonal, then $G/H\cong L$, via $(l_1,l_2)\mapsto {l_1l_2^{-1}}$.
\item Let $G = \PO(p,q)$ and $H = \OO(p-1,q)$. Then the affine symmetric space $$\XX(p,q) = \PO(p,q)/\OO(p-1, q)$$ is a model space for semi-Riemannian manifolds of signature $(p-1,q)$ and of constant curvature $-1$. 
The geometries $(\XX(p,q), \PO(p,q))$ are in fact subgeometries of real projective geometry $(\RP^{p+q-1}, \PGL_{p+q} \RR)$, and we will describe their limiting geometries explicitly in Section~\ref{sec:pfqf}.
\item The symmetric subgroups of $\PGL_n\RR$ are $\operatorname{P}(\GL_p\RR\oplus \GL_q\RR)$ and $\PO(p,q)$ where 
$p+q = n$, or $\operatorname{P}(\GL_m\CC)$ and $\operatorname{P}(\Sp(2m,\RR))$ where $n = 2m$, where for a subgroup $H' \subset \GL_n \RR$, $\operatorname{P}(H')$ denotes the image of $H'$ under the projection $\GL_n \RR \to \PGL_n \RR$. See Section~\ref{sec:symmetric-GLn}.

\end{itemize}

In order to describe the conjugacy limits of symmetric subgroups, we will make use of the rich structure theory of affine symmetric spaces and symmetric subgroups. In order to keep the presentation concise we recall only the necessary details of the structure theory and refer the reader for more details and proofs to \cite{Schlichtkrull, Rossmann, nomizu, MB}. 
We denote by $\frakg$ and $\frakh$ the Lie algebra of $G$ and $H$ respectively, and let the differential of $\sigma$, an involution of $\frakg$, be again denoted by $\sigma: \frakg \to \frakg$. Then $\frakh$ is the $+1$ eigenspace of $\sigma$ and we denote the $-1$ eigenspace by $\frakq$.
This gives the orthogonal decomposition $$\frakg = \frakh \oplus \frakq.$$ 
Note also that $[\frakh, \frakq] \subset \frakq$, $[\frakq,\frakq] \subset \frakh$.

There exists a Cartan involution $\theta: \frakg \to \frakg$, which commutes with $\sigma$. We denote by $K = G^\theta$ the maximal compact subgroup of $G$ given by the fixed points of $\theta$  and we let $\frakg = \frakk \oplus \frakp$ be the corresponding Cartan decomposition of $\frakg$. 
Since the involutions commute, all four subspace are preserved by both involutions and so is the following decomposition:
$$
\frakg = \frakk \cap \frakh \oplus \frakk \cap \frakq \oplus \frakp \cap \frakh \oplus \frakp \cap \frakq. 
$$

Next, we may choose a maximal abelian sub-algebra $\fraka \subset \frakp$ so that the intersection $\frakb = \fraka \cap \frakq$ is a maximal abelian sub-algebra of $\frakp \cap \frakq$. Note that $\frakb$ is unique up to the action of $H \cap K$. We let $A := \exp (\fraka)$ be the corresponding connected subgroup of $G$ and $B := \exp(\frakb) \subset A$. Note that in some cases $\frakb = \fraka$ while in others the containment is strict. The following is well-known (see for example Proposition 7.1.3 of \cite{Schlichtkrull}):

\begin{theorem}[$KBH$ decomposition]\label{thm:KAH}
For any $g \in G$ there exists $k \in K$, $b\in B$ and $h \in H$, such that 
$g = kbh$. Moreover $b$ is unique up to conjugation by the Weyl group $W_{H \cap K} := N_{H\cap K} (\frakb)/ Z_{H\cap K}( \frakb)$. 
\end{theorem}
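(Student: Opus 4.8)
Although this $KBH$ (generalized Cartan) decomposition is classical, let me sketch the proof I would give, working in the Hadamard manifold $M = G/K$ with base point $o = eK$ and $T_oM \cong \frakp$. The plan is to first reformulate existence geometrically. Taking inverses shows $G = KBH$ is equivalent to $G = HBK$, and since a coset $hbK$ is the point $h\cdot(b\cdot o)$, the latter says exactly that every $H$-orbit in $M$ meets the flat $F := \exp(\frakb)\cdot o$. Because $\sigma$ commutes with $\theta$ it preserves $K$, hence descends to an isometric involution $\bar\sigma$ of $M$ fixing $o$, with $d\bar\sigma_o = \sigma|_\frakp$; it is $H$-equivariant since $\sigma|_H = \mathrm{id}$. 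The $(\pm 1)$-eigenspaces of $d\bar\sigma_o$ are $\frakp\cap\frakh$ and $\frakp\cap\frakq$, both Lie triple systems, so $N := \exp(\frakp\cap\frakh)\cdot o$ and $S := \exp(\frakp\cap\frakq)\cdot o$ are totally geodesic; moreover $N = H\cdot o$ is a single closed $H$-orbit (using the $\theta$-stable Cartan decomposition $H = (H\cap K)\exp(\frakp\cap\frakh)$).

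First I would reduce a general orbit into $S$. Because $M$ is Hadamard and $N$ is a complete totally geodesic --- hence closed and convex --- submanifold, the nearest-point projection $\pi : M \to N$ is globally defined, and it is $H$-equivariant since $H$ acts by isometries preserving $N$. Given $p \in M$, transitivity of $H$ on $N$ lets me choose $h \in H$ with $h\cdot \pi(p) = o$; then $\pi(hp) = o$, so the geodesic from $hp$ to $o$ meets $N$ orthogonally at $o$. Its initial velocity therefore lies in the orthogonal complement of $T_oN = \frakp\cap\frakh$ inside $\frakp$, namely $\frakp\cap\frakq$, giving $hp \in S$. This is the step where I expect the main obstacle to sit: the group $H$ is noncompact, so one cannot simply minimize over it. Nonpositive curvature neutralizes this precisely, because the projection onto the convex set $N$ exists globally with no compactness hypothesis on $H$.

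Next I would move from $S$ into $F$ using only the compact group $H\cap K$, which preserves $\frakp\cap\frakq$ under $\Ad$ and fixes $o$. Here the claim is the covering $\frakp\cap\frakq = \bigcup_{k\in H\cap K}\Ad(k)\frakb$: for $Y \in \frakp\cap\frakq$ and a regular $H_0 \in \frakb$, maximize the function $k \mapsto \langle \Ad(k)Y, H_0\rangle$ over the compact group $H\cap K$; at a maximizer the first-order condition, combined with $\Ad$-invariance of the form, forces $[\Ad(k)Y, H_0] = 0$, so $\Ad(k)Y$ lies in the centralizer of the regular element $H_0$ in $\frakp\cap\frakq$, which is $\frakb$ by maximality. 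That regular elements exist and have centralizer exactly $\frakb$ is part of the structure theory recalled above. Applying this to $hp = \exp(Y)\cdot o \in S$ yields $k \in H\cap K$ with $kh\cdot p \in F$, finishing existence.

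Finally, for uniqueness I would show that the $H$-orbit of a point of $F$ meets $F$ in a single $W_{H\cap K}$-orbit. Suppose $b_1 = \exp(Y_1)$ and $b_2 = \exp(Y_2)$ with $Y_1, Y_2 \in \frakb$ and $h\cdot b_1 o = b_2 o$ for some $h \in H$. Applying the equivariant involution $\bar\sigma$ and using $\bar\sigma(b_i o) = b_i^{-1}o$ gives also $h\cdot b_1^{-1}o = b_2^{-1}o$. Thus $h$ carries the geodesic through $b_1^{-1}o,\, o,\, b_1 o$ to the geodesic through $b_2^{-1}o,\, o,\, b_2 o$ preserving midpoints; since $o$ is the common midpoint on both sides, $h\cdot o = o$, i.e. $h \in H\cap K$. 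Then $\Ad(h)Y_1 = Y_2$ because $\exp$ is injective on $\frakp$, and when $Y_1$ is regular, $\Ad(h)\frakb = \Ad(h)Z_{\frakp\cap\frakq}(Y_1) = Z_{\frakp\cap\frakq}(Y_2) = \frakb$, so $h$ represents a class in $W_{H\cap K}$ carrying $Y_1$ to $Y_2$; the non-regular case then follows by continuity. I expect this midpoint argument to be the cleanest route to the Weyl-group statement, avoiding any twisted-conjugacy bookkeeping.
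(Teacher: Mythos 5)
The paper never proves Theorem~\ref{thm:KAH}: it quotes it as well known, citing Proposition 7.1.3 of \cite{Schlichtkrull}, so your argument must be judged on its own merits rather than against an in-paper proof. Your architecture is the standard geometric proof of this generalized Cartan decomposition, and almost all of it is correct: the reformulation of $G=KBH$ as ``every $H$-orbit in the Hadamard manifold $M=G/K$ meets the flat $B\cdot o$''; the use of the globally defined, $H$-equivariant convex projection onto the closed totally geodesic orbit $N=H\cdot o$ to place a point of each orbit in $S=\exp(\frakp\cap\frakq)\cdot o$ (this is indeed where nonpositive curvature substitutes for the missing compactness of $H$); the first-order maximization over the compact group $H\cap K$ giving $\frakp\cap\frakq=\bigcup_{k\in H\cap K}\Ad(k)\frakb$; and the $\bar\sigma$/midpoint trick reducing uniqueness to $h\in H\cap K$ and hence to $\Ad(h)Y_1=Y_2$ with $Y_1,Y_2\in\frakb$. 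The background facts you invoke do hold in the paper's setting: for any $H$ with $G^\sigma_0\subset H\subset G^\sigma$, applying $\sigma$ to the global Cartan decomposition $h=k\exp X$ and using its uniqueness gives $k\in K^\sigma$, $X\in\frakp\cap\frakh$, and since $\exp(\frakp\cap\frakh)\subset G^\sigma_0\subset H$ one gets $H=(H\cap K)\exp(\frakp\cap\frakh)$ and $\theta$-stability of $H$; regular elements of $\frakb$ exist because $\ad(\frakb)$ is simultaneously real-diagonalizable.

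The one genuine gap is your final clause: ``the non-regular case then follows by continuity'' is not a proof, and the naive continuity argument fails. If $Y_1\in\frakb$ is singular and $\Ad(h)Y_1=Y_2$, approximating $Y_1$ by regular $Y_1'\in\frakb$ destroys the hypothesis, since $\Ad(h)Y_1'$ need not lie in $\frakb$; re-conjugating it into $\frakb$ by some new $k'\in H\cap K$ and invoking the regular case produces a Weyl element relating $Y_1'$ to $\Ad(k'h)Y_1'$, and in the limit one only concludes that $Y_1$ is $W_{H\cap K}$-conjugate to $\Ad(k_\infty)Y_2$ for some accumulation point $k_\infty\in H\cap K$ --- which is the statement you were trying to prove, not $Y_2$ itself. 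The standard repair reuses your own maximization lemma inside a centralizer. Since $Y_2\in\frakb\cap\Ad(h)\frakb$ and both subspaces are abelian, $\frakb$ and $\Ad(h)\frakb$ are maximal abelian subspaces of $Z_{\frakp\cap\frakq}(Y_2)$; the centralizer $Z_{\frakg}(Y_2)$ is reductive and stable under both $\theta$ and $\sigma$, so running the first-order argument over the compact group $Z_{H\cap K}(Y_2)$ (maximize $z\mapsto\langle \Ad(z)Y',H_0\rangle$ with $Y'$ regular in $\Ad(h)\frakb$ and $H_0$ regular in $\frakb$, relative to $Z_{\frakg}(Y_2)$) yields $z\in Z_{H\cap K}(Y_2)$ with $\Ad(z)\Ad(h)\frakb=\frakb$. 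Then $n=zh\in N_{H\cap K}(\frakb)$ and $\Ad(n)Y_1=\Ad(z)Y_2=Y_2$, which is exactly the Weyl-conjugacy claimed. With that paragraph inserted in place of the appeal to continuity, your proof is complete.
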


\begin{remark}
This factorization theorem will be our main tool in determining the limits of $H$. 
In the case $G = \PGL_n \RR$, this is equivalent to a matrix decomposition theorem for $\GL_n \RR$. 
Furthermore, in this case we may conjugate so that $A$ is a subgroup of diagonal matrices. Hence every conjugacy limit of a symmetric subgroup $H <  \PGL_n \RR$ is conjugate to a conjugacy limit by a sequence of diagonal matrices.
 \end{remark}

\noindent Using Theorem~\ref{thm:KAH}, we prove Theorem~\ref{thm:symmetric-limits} from the introduction, restated here for convenience:
\begin{reptheorem}{thm:symmetric-limits}
Let $H$ be a symmetric subgroup of a semi-simple Lie group $G$ with finite center. 
Then for any geometric limit $L'$ of $H$ in $G$, there exists $X \in \frakb$, so that $L'$ is conjugate to the limit group $L= \lim_{t \to \infty} \exp(tX) H \exp(-tX)$ obtained by conjugation by the one parameter group generated by~$X$. Furthermore, 
$$L = Z_H(X) \ltimes N_+(X),$$ 
where $Z_H(X)$ is the centralizer in $H$ of $X$, and $N_+(X)$ is the connected nilpotent subgroup
$$ N_+(X) := \{g \in G : \lim_{t\to \infty} \exp(tX)^{-1} g \exp(tX) = 1 \}.$$
\end{reptheorem}

\begin{proof}
Let $(c_n) \subset G$ be a sequence and $L = \lim_{n \to \infty} c_n H c_n^{-1}$. By Theorem~\ref{thm:KAH}, we may factorize  $c_n = k_n b_n h_n$, where $k_n \in K, b_n \in B, h_n \in H$. 
Then, 
\begin{align*}
c_n H c_n^{-1} &= k_n b_n h_n H h_n^{-1} b_n^{-1} k_n^{-1} \\
&= k_n b_n H b_n^{-1} k_n^{-1}
\end{align*}
We may assume, after passing to a subsequence, that $k_n \to k \in K$, so it follows that $c_n H c_n^{-1}$ converges if and only if $b_n H b_n^{-1}$ converges and their limits are conjugate by $k$.
Hence we assume that $k_n = 1 = h_n$ and consider only conjugacies by sequences $(b_n) \in B$.

Consider the set of roots 
$$\Sigma(\frakg, \fraka):=\{ \alpha \in \fraka^*\, |\, \text{ there exists non-zero } Z\in \frakg \text{ with }  \ad(X)(Z) = \alpha(X) Z , \, \text{  for all } X \in \fraka\}.$$ Denote by $\frakg_\alpha:= \{Z\in \frakg\, |\, \ad(X)(Z) = \alpha(X) Z , \, \text{  for all } X \}$ the root spaces and let $\frakg = \sum_{\alpha \in \Sigma(\frakg, \fraka)} \frakg_\alpha$ be the corresponding root space decomposition of $\frakg$. Choose a basis for $\frakg$ compatible with this root space decomposition. We work with the adjoint representation $\Ad: G \to \GL(\frakg) \cong \GL(N)$, expressed in this basis, which takes the subgroup $B$ to a subgroup of diagonal matrices in $\GL(N)$. The diagonal entries $\Ad(b_n)_{ii}$ of $\Ad(b_n)$ are positive, and we may assume they are arranged in increasing order: $ \Ad(b_n)_{jj} \geq \Ad(b_n)_{ii}$ for all $n$ and $j > i$. Further, we may assume that for consecutive indices $j = i+1$, the  diagonal entries of $\Ad(b_n)$ satisfy exactly one of the following:
\begin{itemize}
\item Either $\Ad(b_n)_{jj} = \Ad(b_n)_{ii}$ holds for all $n$, or
\item $\Ad(b_n)_{ii} < \Ad(b_n)_{jj}$ holds for all $n$, and $\Ad(b_n)_{jj}/\Ad(b_n)_{ii} \to \infty$.
\end{itemize}
For if $\Ad(b_n)_{jj}/\Ad(b_n)_{ii}$ remains bounded we may multiply $(b_n)$ by a sequence $(b_n') \subset B$ which remains in a compact subset of $B$ so that the above holds; the resulting limit differs only by conjugation (by the limit of $b_n'$).

Now consider a sequence $(h_n) \subset H$, so that $b_n h_n b_n^{-1} \to \ell \in L$. Then, the matrix entries 
$$\Ad(b_n h_n b_n^{-1})_{ji} = \frac{\Ad(b_n)_{jj}}{\Ad(b_n)_{ii}} \Ad(h_n)_{ji}$$
converge to $\Ad(\ell)_{ji}$ as $n \to \infty$. We therefore have that, for $i \leq j$,
\begin{equation}
\left \{ \begin{array}{ll}
 \Ad(h_n)_{ji} \longrightarrow \Ad(\ell)_{ji} & \text{ if $\Ad(b_n)_{ii} = \Ad(b_n)_{jj}$, or}\\
 \Ad(h_n)_{ji} \longrightarrow 0 & \text{ if $\frac{ \Ad(b_n)_{jj}}{\Ad(b_n)_{ii} }\to \infty$ as $ n \to \infty$.}
 \end{array}\right .\end{equation}
 This gives us information about the block lower diagonal entries of $\Ad(h_n)$. We use the involution $\sigma$ to obtain control over the block upper diagonal matrix entries. 
 Note first that the involution $\sigma$ satisfies $\Ad(\sigma(b_n h_n b_n^{-1}))_{ji} =  \Ad(b_n^{-1} h_n b_n)_{ji}$. 
 Now $\sigma(b_n h_n b_n^{-1}) \to \sigma(\ell)$ as $n \to \infty$ and therefore the matrix entries 
  \begin{align*}
\Ad(\sigma(b_n h_n b_n^{-1}))_{ji} &=  \Ad(b_n^{-1} h_n b_n)_{ji}\\
&= \frac{\Ad(b_n)_{ii}}{\Ad(b_n)_{jj}} \Ad(h_n)_{ji}
\end{align*}
also converge. Then, similar to the above, we conclude that, for $i \geq j$:
\begin{equation}
\left \{ \begin{array}{ll}
 \Ad(h_n)_{ji} \longrightarrow \Ad(\ell)_{ji} & \text{ if $\Ad(b_n)_{ii} = \Ad(b_n)_{jj}$, or}\\
 \Ad(h_n)_{ji} \longrightarrow 0 & \text{ if $\frac{ \Ad(b_n)_{jj}}{\Ad(b_n)_{ii} }\to 0 $ as $ n \to \infty$.}
 \end{array}\right .\end{equation}
 Therefore, we conclude that $\Ad(h_n)$ converges, and because $G \to \Ad(G)$ is proper (because $Z(G)$ is finite), we may take a subsequence so that $h_n \to h_\infty$ as $n \to \infty$, where
\begin{equation}
\left \{ \begin{array}{ll}
 \Ad(h_\infty)_{ji} = \Ad(\ell)_{ji} & \text{ if $\Ad(b_n)_{ii} = \Ad(b_n)_{jj}$.}\\
 \Ad(h_\infty)_{ji} = 0 & \text{ if $\Ad(b_n)_{ii} \neq \Ad(b_n)_{jj}$.}
 \end{array}\right .\end{equation}
 In other words, $\Ad(h_\infty)$ belongs to the centralizer $Z_{\GL(N)}(\Ad(b_n))$. It follows that $$h_\infty \in Z_H(b_n).$$
 This is because the commutator of $h_{\infty}$ with $b_n$ is in the kernel of $\Ad$.  However $b_n$ lies in a one parameter
 group that commutes with $h_{\infty}$ so their commutator lies in a connected, hence trivial, subgroup of the center of $G$. In the above expressions, note that $Z_{\GL(N)}(\Ad(b_n))$ and $Z_H(b_n)$ are independent of $n$ by our assumptions on the sequence $(b_n)$.  
 
 Observe that $Z_H(b_n) \subset L$. In fact, we have shown that $L$ is an \emph{expansive limit} of $H$ about the subgroup $Z_H(b_n)$ (see Section~\ref{sec:other-notions}).
 From the above arguments, we may also conclude that
\begin{align*}
\Ad(\ell)_{ji} &= \lim_{n \to \infty} \frac{\Ad(b_n)_{jj}}{\Ad(b_n)_{ii}} \Ad(h_n) = 0
\end{align*} in the case that $\Ad(b_n)_{jj}/\Ad(b_n)_{ii} \to 0$. It then follows that $\Ad(b_n^{-1} \ell b_n) \to \Ad(h_\infty)$ because
\begin{equation}
\Ad(b_n^{-1}\ell b_n)_{ji} = \frac{\Ad(b_n)_{ii}}{\Ad(b_n)_{jj}}\Ad(\ell)_{ji} 
\left \{ \begin{array}{ll}
 = \Ad(h_\infty)_{ji} & \text{ if $\Ad(b_n)_{ii} = \Ad(b_n)_{jj}$}\\
  = 0 = \Ad(h_\infty)_{ji} & \text{ if $\Ad(b_n)_{jj} < \Ad(b_n)_{ii}$}\\
  \longrightarrow 0 = \Ad(h_\infty)_{ji} & \text{ if $\Ad(b_n)_{jj} > \Ad(b_n)_{ii}$}
 \end{array}\right .\end{equation}
 
 \noindent Hence $b_n^{-1} \ell b_n \to h_\infty' \in Z_H(b_n)$ (in fact, $h_\infty' = h_\infty$, but we have only shown $\Ad(h_\infty) = \Ad(h_\infty')$).
 Therefore $b_n^{-1}  (h_\infty')^{-1} \ell b_n \to 1$ as $n \to \infty$ and therefore $(h_\infty')^{-1} \ell$ lies in the group $$ N_+ := \{ g \in G: b_n^{-1} g b_n \to 1 \text{ as } n \to \infty \}.$$ 
It follows that 
\begin{equation}
\label{eqn:contained}
L \subset \langle Z_H(b_n), N_+ \rangle = Z_H(b_n) \ltimes N_+.
\end{equation}
We next show equality. We have that $L = Z_H(b_n) \ltimes N'$ where $N' = N_+ \cap L \subset N_+$ is a closed subgroup. We also have that $N_+$ is connected because $N_+$ is preserved by conjugation by $B$, and conjugation by $b_n^{-1}$, for large $n$, brings elements arbitrarily close to the identity; hence all elements are in the identity component. So equality in \eqref{eqn:contained} will follow by showing that $\dim N_+ + \dim Z_H((b_n)) = \dim H$. We show this at the Lie algebra level. Observe that $\mathfrak g=\mathfrak z +\mathfrak n_++\mathfrak n_-$ where
\begin{align*}
\mathfrak z &= \{ Y \in \frakg: \Ad(b_n) Y = Y \text{ for all } n\}\\
\mathfrak n_+ &= \{ Y \in \frakg: \Ad(b_n^{-1})Y \to 0 \text{ as } n \to \infty\}\\
\mathfrak n_- &= \{Y \in \frakg: \Ad(b_n)Y \to 0 \text{ as } n \to \infty\}
\end{align*}

The involution $\sigma$ fixes $\mathfrak z$, and exchanges $\mathfrak n_+$ and $\mathfrak n_-$. Therefore $\mathfrak n_+$ and $\mathfrak n_-$ have the same dimension and the dimension of the $+1$ eigenspace $\frakh$ of $\sigma$ is equal to $\dim \mathfrak z + \dim \mathfrak n_+$ while the dimension of the $-1$ eigenspace is $\dim \mathfrak n_+$.  Since $\mathfrak z$ is the Lie algebra of $Z_H(b_n)$ and $\mathfrak n_+$ is the Lie algebra of $N_+$, we conclude that $L = Z_H(b_n) \ltimes N_+$. Observe that $\Ad(N_+)$ is upper triangular and therefore $N_+$ is nilpotent. 

The proof now concludes by observing that any sequence $(b_n) \subset B$ whose eigenvalues satisfy our above assumptions will produce exactly the same limit. In particular, let $X = \log(b_m)$ for some $m$. Then conjugating $H$ by the sequence $b_n' = \exp(t_n X)$, also produces $L$ as the limit for any sequence of reals $(t_n)$ such that $t_n \to \infty$. In this case $Z_H(\exp(t_n X)) = Z_H(X)$ and this implies the claim.
\end{proof}

The proof of Theorem~\ref{thm:symmetric-limits} shows that the limit is determined up to conjagacy by 
an ordered partition of the numbers $\{1,\ldots, N\}$. Hence
 there are finitely many limits, up to conjugacy, of a symmetric subgroup $H$ of $G$. We now introduce some notation in order to enumerate these limits.
Consider the root system $\Sigma(\frakg, \frakb) \subset \frakb^*$ defined by the adjoint action of $\frakb$ on $\frakg$. The Weyl group $W = N_K(\frakb)/Z_K(\frakb)$ acts on $\frakb$; it is the group generated by reflections in the hyperplanes determined by the roots.
We may similarly consider the adjoint action of $\frakb$ on the smaller Lie algebra $\frakg_{\tau} := \frakh \cap \frakk \oplus \frakp \cap \frakq$; this is the Lie algebra of the symmetric subgroup defined by the involution $\tau = \sigma \theta$. Then the corresponding root system $\Sigma(\frakg_{\tau}, \frakb)$ is contained in $\Sigma(\frakg, \frakb)$, and the corresponding Weyl group $W_{H \cap K} := N_{H \cap K}(\frakb)/Z_{H \cap K}(\frakb)$ is a subgroup of $W$. 
 Let $\Sigma^+ \subset \Sigma(\frakg, \frakb)$ denote a system of positive roots, and let $\Delta \subset \Sigma^+$ be a choice of simple roots. Define: 
 \begin{align*}
 \frakb^+ &= \{ Y \in \frakb : \alpha(Y) > 0 \text{ for all } \alpha \in \Delta\}, & B^+ &= \exp{\frakb^+},\\
 \overline{\frakb^+} &= \text{ the closure of } \frakb^+, & \overline{B^+} &= \exp{\overline{\frakb^+}}.
 \end{align*}
 Then $\overline{\frakb^+}$ is the closed Weyl chamber corresponding to $\Sigma^+$; it is (the closure of) a fundamental domain for the action of $W$. A closed Weyl chamber for the action of $W_{H \cap K}$ is then given by a union $\mathcal W \cdot \overline{\frakb^+}$ of translates of $\overline{\frakb^+}$ by a set $\mathcal W$ of coset representatives for the quotient $W/W_{H \cap K}$. 

By Theorem~\ref{thm:KAH}, any element $g \in G$ may be decomposed as $g = kbh$, where $k \in K, b\in B, h \in H$. There exists $w_1 \in W_{H \cap K}$ so that the conjugate $w_1^{-1} b w_1$ lies in the exponential image of the closed Weyl chamber $\mathcal W \cdot \overline{\frakb^+}$. Therefore $w_1^{-1} b w_1 = w^{-1} b^+ w$ where $b^+ \in \overline{B^+}$ and $w \in \mathcal W$. Therefore, we may write $$g = k'b^+wh'$$ where $k' = k w_1 w^{-1}$ and $h' = w_1^{-1} h$. Here $b^+ \in \overline{B^+}$ and $w \in \mathcal W$ are uniquely determined. 

 It follows that any limit of $H$ may be obtained by conjugating by a sequence in $\overline{B^+}w$ for some $w \in \mathcal W$. We next apply these observations in combination with Theorem~\ref{thm:symmetric-limits} to obtain an enumeration of the limit groups in terms of the element $w$ and the behavior of the sequence in $\overline{B^+}$.
Let $I \subset \Delta$ be a subset, and $\Sigma_{I}^+$ the span of $\Delta - I$ in $\Sigma^+$. 
We set 
\begin{align*}
{\frakb}_I &= \bigcap_{\alpha \in I} \ker(\alpha) \subset {\frakb}, &  B_I &= \exp({\frakb}_I),\\
\frakb_I^+ &= \bigcap_{\alpha \in I} \ker(\alpha) \cap \bigcap_{\alpha \in \Delta - I} \{ X \in {\frakb} \,|\, \alpha (X) >0 \}, & B_I^+ &= \exp(\frakb_I^+).
 \end{align*}

\begin{theorem}\label{thm:limit_nice}
Let $H$ be a symmetric subgroup of a semi-simple Lie group $G$ with finite center. Let $L$ be a limit of $H$ under conjugacy in $G$. Then $L$ is conjugate to a subgroup of the form
$$L_{I,w} = Z_{H_w}(\frakb_I) \ltimes N_I$$
where $w \in \mathcal W$ and $I \subset \Delta$ is a subset of the set of simple roots $\Delta \subset \Sigma(\frakg, \frakb)$. Here, $H_w := wHw^{-1}$ and $N_I$ is the connected subgroup of $G$ with Lie algebra $
\frakn_I = \sum_{\alpha \in \Sigma_{I}^+ } \frakg_{\alpha}$. Further, any $L_{I,w}$ is achieved as a limit.
\end{theorem}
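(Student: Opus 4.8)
The plan is to assemble Theorem~\ref{thm:symmetric-limits} with the refined $KBH$ decomposition $g = k'b^+wh'$ established just above the statement, and then to verify that the resulting limit depends on the conjugating sequence only through the combinatorial pair $(I,w)$. The heavy analytic lifting has already been done in Theorem~\ref{thm:symmetric-limits}; what remains is bookkeeping with the Weyl combinatorics and the root-space description of the two factors.

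First I would reduce every limit to a conjugation by a one-parameter subgroup whose direction lies in the closed positive chamber. By Theorem~\ref{thm:symmetric-limits} any limit of $H$ is conjugate to $\lim_{t\to\infty}\exp(tX)H\exp(-tX)$ for some $X\in\frakb$. Using the factorization $g=k'b^+wh'$, in which $b^+\in\overline{B^+}$ and $w\in\mathcal{W}$ are uniquely determined, and recalling that the $k'$ and $h'$ factors do not affect the limit (exactly as in the proof of Theorem~\ref{thm:symmetric-limits}), any limit arises by conjugating by a sequence in $\overline{B^+}w$ for a single $w\in\mathcal{W}$ (constant along a convergent subsequence, since $\mathcal{W}$ is finite). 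Because $(b^+w)H(b^+w)^{-1}=b^+H_w(b^+)^{-1}$ with $H_w:=wHw^{-1}$ again symmetric, this equals a conjugation of $H_w$ by a sequence in $\overline{B^+}=\exp\overline{\frakb^+}$. Applying Theorem~\ref{thm:symmetric-limits} to $H_w$ then yields an $X^+\in\overline{\frakb^+}$ with
\[
L = Z_{H_w}(X^+)\ltimes N_+(X^+).
\]

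Next I would read off the combinatorial datum and identify the two factors. Set $I=\{\alpha\in\Delta:\alpha(X^+)=0\}$, so that $X^+$ lies in the relative interior $\frakb_I^+$ of the corresponding chamber face. Writing a root in the simple-root basis, one sees $\alpha(X^+)>0$ precisely when $\alpha$ has a nonzero coefficient on some simple root of $\Delta-I$, while $\alpha(X^+)=0$ precisely when $\alpha\in\operatorname{span}(I)$, which in turn is exactly when $\alpha$ annihilates all of $\frakb_I$. Since the Lie algebra of $N_+(X^+)$ is $\sum_{\alpha(X^+)>0}\frakg_\alpha$, this is $\frakn_I$, so $N_+(X^+)=N_I$. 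Likewise $\mathfrak{z}_\frakg(X^+)=\frakg_0\oplus\bigoplus_{\alpha(X^+)=0}\frakg_\alpha=\mathfrak{z}_\frakg(\frakb_I)$, and intersecting with the Lie algebra of $H_w$ gives $Z_{H_w}(X^+)=Z_{H_w}(\frakb_I)$. Hence $L=L_{I,w}$, which proves that every limit is conjugate to some $L_{I,w}$; note in particular that the limit is independent of the particular choice of $X^+$ in the face $\frakb_I^+$. For the converse I would run this in reverse: for any $I\subset\Delta$ and $w\in\mathcal{W}$ choose $X^+\in\frakb_I^+$, which is nonempty as the relative interior of a face, and form $\lim_{t\to\infty}\exp(tX^+)H_w\exp(-tX^+)$; by Theorem~\ref{thm:symmetric-limits} this limit exists and by the computation above equals $Z_{H_w}(\frakb_I)\ltimes N_I=L_{I,w}$, which is a genuine limit of $H$ since $H_w$ is conjugate to $H$.

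The main obstacle is the identification of the two factors with their combinatorial models, and specifically the centralizer identity $Z_{H_w}(X^+)=Z_{H_w}(\frakb_I)$. The matching at the Lie-algebra level $\mathfrak{z}_\frakg(X^+)=\mathfrak{z}_\frakg(\frakb_I)$ is immediate from the root-space description, but one must also rule out extra components of $Z_{H_w}(X^+)$: this uses that $X^+$ lies in the relative interior $\frakb_I^+$, so that any element centralizing $X^+$ preserves the face $X^+$ spans and hence $\frakb_I$. The nilpotent factor $N_+(X^+)=N_I$ and the connectedness of $N_I$ (it is conjugated toward the identity by $\exp(-tX^+)$) are then routine, and the remaining points — the finiteness of $\mathcal{W}$ and the face-dependence of the limit — follow directly from the identification step.
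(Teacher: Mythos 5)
Your overall route coincides with the paper's: reduce, via Theorem~\ref{thm:symmetric-limits} together with the refined decomposition $g = k'b^+wh'$, to a limit of $H_w$ under conjugation by $\exp(tX^+)$ with $X^+\in\overline{\frakb^+}$ (the paper phrases this slightly differently, starting from $L = Z_H(X)\ltimes N_+(X)$ and moving $X$ into the chamber by $X' = \Ad(wu)X$ with $u\in W_{H\cap K}$, $w\in\mathcal W$, but the content is identical); then set $I=\{\alpha\in\Delta:\alpha(X^+)=0\}$, identify $N_+(X^+)=N_I$ by the root-space computation, and get achievability by running the one-parameter conjugation forwards. All of that is correct and matches the paper.

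The genuine gap is at exactly the step you flagged: the group-level identity $Z_{H_w}(X^+)=Z_{H_w}(\frakb_I)$. Your justification --- ``any element centralizing $X^+$ preserves the face $X^+$ spans and hence $\frakb_I$'' --- does not hold up. First, for $g\in Z_G(X^+)$ the automorphism $\Ad(g)$ need not preserve $\frakb$ at all (the chamber and its faces live inside $\frakb$; $\Ad(g)$ is only known to preserve $\mathfrak{z}_\frakg(X^+)$, and while $\frakb_I$ is central in $\mathfrak{z}_\frakg(X^+)$, the center may be strictly larger and is in any case only preserved setwise), so the claim that $g$ preserves the face is unjustified. Second, even granting setwise preservation of $\frakb_I$, that gives only \emph{normalization}, whereas $Z_{H_w}(\frakb_I)$ requires fixing $\frakb_I$ pointwise. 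The Lie-algebra equality $\mathfrak{z}_\frakg(X^+)=\mathfrak{z}_\frakg(\frakb_I)$ settles only the identity component, and the centralizers arising here are typically very disconnected --- in the $\PO(p,q)$ application $Z_H(X)$ is a block product of groups $\OO(p_i,q_i)$ --- so controlling the extra components is the entire content of the step. The paper isolates this as Lemma~\ref{lem:centralizers} and proves it with non-elementary input from algebraic group theory: pass to $\Ad(G)$, note that the Zariski closure of the one-parameter group through any $Y\in\frakb_I^+$ is an $\RR$-torus, invoke the theorem that centralizers of tori in connected algebraic groups are Zariski connected, and conclude $Z_G(Y_1)=Z_G(Y_2)$ for all $Y_1,Y_2\in\frakb_I^+$ because two Zariski-connected closed subgroups with the same Lie algebra coincide (the authors thank a MathOverflow contributor for precisely this lemma, a sign it is not routine). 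To complete your argument you must prove or cite such a statement; with that lemma in hand, the rest of your write-up goes through essentially verbatim.
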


\begin{proof}
By Theorem~\ref{thm:symmetric-limits}, we may assume (after conjugating) that $L = Z_H(X) \ltimes N_+(X)$ for some $X \in \frakb$.
Define $$I = \{ \alpha \in \Delta: \alpha(X) = 0\}$$
 and let  $u \in W_{H \cap K}$ and $w \in \mathcal W$ be such that $X' := \Ad(w u)X$ lies in $\overline{\frakb^+}$. Note that, in fact, $X'$ lies in $\frakb_I^+$.
 Then, we have
 \begin{align*}
 Z_H(X) &= Z_H(\Ad(u^{-1} w^{-1}) X') &N_+(X) &= N_+(\Ad(u^{-1}w^{-1}X') \\ &= u^{-1} w^{-1} Z_{H_{wu}}(X') wu & &= u^{-1}w^{-1} N_+(X') wu\\ &= (wu)^{-1} Z_{H_{w}}(X') wu & &= (wu)^{-1} N_+(X') wu.
 \end{align*}
 where in the last step $H_{wu} = H_w$ because $u \in H$. Therefore $L$ is conjugate to $Z_{H_w}(X') \ltimes N_+(X')$. Now, its clear that $N_+(X') = N_I$ because their Lie algebras agree. The Lie algebra of $N_+(X')$ consists of all elements $Y \in \frakg$ such that $\Ad(\exp(-tX'))Y \to 0$ as $t \to \infty$; this is exactly the span of the root spaces $\frakg_\alpha$ for which $\alpha(X') > 0$, in other words $\alpha \in \Sigma_I^+$. It remains to show that $Z_{H_w}(X') = Z_{H_w}(\frakb_I^+)$. The following Lemma will complete the proof. 
  \end{proof} 
 \begin{lemma}\label{lem:centralizers}
 Let $Y_1,Y_2 \in \frakb_I^+$. Then $Z_G(Y_1) = Z_G(Y_2)$.
 \end{lemma}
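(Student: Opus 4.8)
The plan is to prove the stronger statement that $Z_G(Y)=Z_G(\frakb_I)$ for \emph{every} $Y\in\frakb_I^+$; since the right-hand side is independent of $Y$, this at once gives $Z_G(Y_1)=Z_G(Y_2)$. The inclusion $Z_G(\frakb_I)\subseteq Z_G(Y)$ is immediate from $Y\in\frakb_I$, so the content is the reverse inclusion. I would begin at the Lie algebra level. Since $Y\in\frakp$, the operator $\ad(Y)$ is $\RR$-diagonalizable and $\frakg_\alpha$ is an eigenspace of eigenvalue $\alpha(Y)$, so $\mathfrak{z}_{\frakg}(Y)=\frakg_0\oplus\bigoplus_{\alpha(Y)=0}\frakg_{\alpha}$. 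The elementary but crucial observation is that the set of vanishing roots is constant on $\frakb_I^+$: writing $\alpha=\sum_{\beta\in\Delta}c_\beta\beta$ with the $c_\beta$ all of one sign, one has $\alpha(Y)=\sum_{\beta\in\Delta\setminus I}c_\beta\,\beta(Y)$ for $Y\in\frakb_I$, and since $\beta(Y)>0$ for every $\beta\in\Delta\setminus I$ this vanishes exactly when $c_\beta=0$ for all such $\beta$, i.e. when $\alpha\in\Sigma_I:=\Sigma(\frakg,\frakb)\cap\operatorname{span}(I)$. Hence $\mathfrak{z}_{\frakg}(Y)=\frakl$ with $\frakl:=\frakg_0\oplus\bigoplus_{\alpha\in\Sigma_I}\frakg_{\alpha}$ independent of $Y\in\frakb_I^+$; in particular the identity components $Z_G(Y)_0$ all coincide. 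Note also that $\frakb_I\subseteq\frakb\subseteq\frakg_0$ is central in $\frakl$, since every $\alpha\in\Sigma_I$ vanishes on $\frakb_I$.

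Passing from the identity component to the full group requires controlling the components of $Z_G(Y)$, and here I would exploit the Cartan involution. Because $Y\in\frakp$ we have $\theta(Y)=-Y$, so $Z_G(Y)=Z_G(-Y)$ is $\theta$-stable and therefore inherits a Cartan decomposition $Z_G(Y)=(Z_G(Y)\cap K)\exp(\frakp\cap\frakl)$. The noncompact factor is harmless: if $W\in\frakp\cap\frakl$ then $[W,\frakb_I]=0$ because $\frakb_I$ is central in $\frakl$, so $\exp(W)$ fixes $\frakb_I$ pointwise. Likewise the identity component $Z_G(Y)_0$ acts on $\frakl$ by inner automorphisms, which fix $\mathfrak{z}(\frakl)\supseteq\frakb_I$ pointwise. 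Thus the reverse inclusion reduces entirely to showing that every $k\in Z_G(Y)\cap K$ fixes $\frakb_I$ pointwise.

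The compact factor is the heart of the matter, and I expect it to be the main obstacle, precisely because it is where one must rule out extra components of $Z_G(Y)$ that could permute $\frakb_I$ nontrivially. The strategy is to reduce such a $k$ to a Weyl-group representative. Since $\Ad(k)$ preserves $\frakp$ and commutes with $\ad(Y)$, it preserves the reductive subalgebra $\frakl\cap\frakp$; using the conjugacy of maximal abelian subspaces of $\frakl\cap\frakp$ under the maximal compact $(Z_G(Y)\cap K)_0$ of $Z_G(Y)_0$, I may adjust $k$ by an element of $(Z_G(Y)\cap K)_0$ — which, lying in $Z_G(Y)_0$, already fixes $\frakb_I$ pointwise — so that the new representative normalizes a fixed maximal abelian subspace through $Y$ and hence defines an element $w$ of the associated Weyl group stabilizing $Y$. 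Now the strict positivity built into $\frakb_I^+$ does the essential work: the stabilizer of $Y\in\frakb_I^+$ in the Weyl group is generated by the reflections $s_\alpha$ in those roots with $\alpha(Y)=0$, whose simple members are exactly $I$, so $w\in W_I:=\langle s_\alpha:\alpha\in I\rangle$; and each $s_\alpha$ with $\alpha\in I$ fixes $\frakb_I=\bigcap_{\alpha\in I}\ker\alpha$ pointwise. Therefore $w$, and hence $k$, fixes $\frakb_I$ pointwise, completing the inclusion $Z_G(Y)\subseteq Z_G(\frakb_I)$.

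I would flag two technical points as the places demanding care. First, the reduction of an arbitrary $k\in Z_G(Y)\cap K$ to a Weyl representative rests on the conjugacy of maximal abelian subspaces inside $\frakl\cap\frakp$, together with the bookkeeping needed to reconcile a maximal abelian subspace of $\frakp$ with the smaller $\frakb$ (maximal abelian only in $\frakp\cap\frakq$) that carries the root system defining $\frakb_I^+$; this $\sigma$-versus-$\theta$ interplay is the genuinely delicate step. Second, the identification of the stabilizer of $Y$ with $W_I$ is exactly the step that uses $Y\in\frakb_I^+$ rather than merely $Y\in\frakb_I$. Conceptually, dropping the strict positivity of the roots $\Delta\setminus I$ would allow two distinct root-blocks to share a common value of $\alpha(Y)$, and then a Weyl element could interchange them while still fixing $Y$, destroying the conclusion; the condition $Y\in\frakb_I^+$ is precisely what forbids such swaps and forces $Z_G(Y)$ to be constant across the open face.
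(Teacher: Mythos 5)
Your proposal is correct in substance, but it takes a genuinely different route from the paper's. The paper's proof is algebraic-geometric and very short: after replacing $G$ by $\Ad(G)$ so as to work inside an algebraic group, it takes the Zariski closures $S_1, S_2$ of the one-parameter subgroups generated by $Y_1, Y_2$, observes that these are $\RR$-tori, invokes the standard theorem that the centralizer of a torus in a connected algebraic group is Zariski connected, and then concludes $Z_G(Y_1)=Z_G(Y_2)$ because the two centralizers have the same Lie algebra --- and the root computation you carry out explicitly in your first paragraph (constancy of the vanishing root set on the open face $\frakb_I^+$) is exactly what the paper leaves implicit at that final step. Your proof instead stays entirely within real Lie theory: you prove the stronger statement $Z_G(Y)=Z_G(\frakb_I)$, handling the identity component by the Lie-algebra identity and the component group via the Cartan decomposition of the $\theta$-stable centralizer together with the Chevalley--Steinberg fact that the stabilizer of $Y$ in a finite reflection group is generated by the reflections in roots vanishing at $Y$. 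What the paper's route buys is brevity, at the cost of importing connectedness of torus centralizers (plus the justification that passing to $\Ad(G)$ is harmless); what yours buys is self-containedness within structure theory, no appeal to algebraicity, and the sharper conclusion $Z_G(Y)=Z_G(\frakb_I)$, which is in fact the form the proof of Theorem~\ref{thm:limit_nice} actually uses. Two small repairs at your flagged delicate step: the assertion ``$w\in W_I$'' conflates the root system $\Sigma(\frakg,\fraka')$ of the maximal abelian subspace $\fraka'\subset\frakp$ with $\Sigma(\frakg,\frakb)$, since $I\subset\Delta$ consists of simple roots of the latter; the clean patch is to note that your identity $\mathfrak{z}_{\frakg}(Y)=\mathfrak{z}_{\frakg}(\frakb_I)$, decomposed into $\fraka'$-root spaces, forces every $\fraka'$-root vanishing at $Y$ to vanish on all of $\frakb_I$, so the Steinberg generators of the stabilizer of $Y$ fix $\frakb_I$ pointwise with no need for $\sigma$-adapted restriction theory. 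Moreover, the bookkeeping you worry about resolves itself: since $\frakb_I$ is central in $\frakl$ and lies in $\frakp$, every maximal abelian subspace of $\frakl\cap\frakp$ automatically contains $\frakb_I$ (and is maximal abelian in $\frakp$), so the Weyl element you produce genuinely acts on a subspace containing $\frakb_I$ and your argument closes.
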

 \begin{proof}[Proof of Lemma] 
 By replacing $G$ by $\Ad(G) \cong G/Z(G)$, we may assume $G$ is an algebraic subgroup of $\GL(N)$. 
 Let $S_1$ and $S_2$ denote the Zariski closures of the one parameter subgroups generated by $Y_1$ and $Y_2$ respectively. Then $S_1$ and $S_2$ are $\RR$-tori. Therefore $Z_G(Y_1) = Z_G(S_1)$ and $Z_G(Y_2) = Z_G(S_2)$ are Zariski connected closed subgroups of $G$. It follows that $Z_G(Y_1) = Z_G(Y_2)$ since both groups have the same Lie algebra.
 \end{proof}

\begin{corollary}\label{cor:indexlimits}
Let  $\Delta$ be a set of simple positive roots in $\Sigma(\frakg, \frakb)$, let $S$ denote the power set of $\Delta$ and let $\mathcal{W}$ denote a set of representatives of $W/W_{H\cap K}$. Let us denote by $\mathcal{L}(H)$ the conjugacy classes of limits of $H$ in $G$. Then there is a surjection from $S \times \mathcal{W}$ onto $\mathcal{L}(H)$. In particular, there are up to conjugacy only finitely many limits of $H$ in $G$.
\end{corollary}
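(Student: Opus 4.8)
The plan is to obtain the surjection directly from Theorem~\ref{thm:limit_nice}, which has already done essentially all of the work. First I would define a map $\Phi \colon S \times \mathcal{W} \to \mathcal{L}(H)$ sending a pair $(I,w)$, with $I \subset \Delta$ and $w \in \mathcal{W}$, to the conjugacy class $[L_{I,w}]$ of the subgroup $L_{I,w} = Z_{H_w}(\frakb_I) \ltimes N_I$ appearing in that theorem. The subset $I$ determines both $\frakb_I = \bigcap_{\alpha \in I}\ker(\alpha)$ and the nilpotent group $N_I$ (through its Lie algebra $\frakn_I = \sum_{\alpha \in \Sigma_{I}^+}\frakg_\alpha$), while $w$ determines $H_w = wHw^{-1}$; hence $L_{I,w}$, and therefore its conjugacy class, depends only on the pair $(I,w)$, so $\Phi$ is well defined. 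That $\Phi$ actually lands in $\mathcal{L}(H)$ --- i.e. that each $L_{I,w}$ genuinely occurs as a limit of $H$ --- is exactly the final assertion of Theorem~\ref{thm:limit_nice}.

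Next I would check surjectivity and finiteness. Given any class in $\mathcal{L}(H)$, pick a representative limit $L$; by the main part of Theorem~\ref{thm:limit_nice}, $L$ is conjugate to some $L_{I,w}$, so $[L] = \Phi(I,w)$ and $\Phi$ is onto. For finiteness it suffices that the domain $S \times \mathcal{W}$ be finite. The set $\Delta$ of simple roots of the restricted root system $\Sigma(\frakg, \frakb)$ is finite, so its power set $S$ has $2^{|\Delta|}$ elements; and $W = N_K(\frakb)/Z_K(\frakb)$ is the finite Weyl group of this root system, whence $\mathcal{W}$, being a set of representatives for $W/W_{H \cap K}$, is finite. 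Thus $S \times \mathcal{W}$ is finite and its image $\mathcal{L}(H)$ is finite as well.

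I do not anticipate a genuine obstacle: the corollary is a repackaging of the two halves of Theorem~\ref{thm:limit_nice} --- the parametrization of every limit by a pair $(I,w)$, and the realizability of each $L_{I,w}$ as an actual limit --- together with the elementary finiteness of a power set and of a Weyl group. The only point deserving a word of care is that $\Phi$ need not be injective, since distinct pairs $(I,w)$ can produce conjugate limit groups; this is precisely why the statement claims only a surjection rather than a bijection.
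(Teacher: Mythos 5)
Your proposal is correct and matches the paper's treatment: the paper gives no separate proof of Corollary~\ref{cor:indexlimits}, presenting it as an immediate consequence of Theorem~\ref{thm:limit_nice} in exactly the way you spell out (every limit is conjugate to some $L_{I,w}$, every $L_{I,w}$ is realized, and $S\times\mathcal{W}$ is finite since $\Delta$ is finite and $W$ is a finite reflection group). Your closing remark on non-injectivity also agrees with the paper's Remark~\ref{rem:finer-list}, which discusses precisely this redundancy.
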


\begin{remark}
\label{rem:finer-list}
The map of $S \times \mathcal{W}$ onto $\mathcal{L}(H)$ is in general not injective. For example if $I = \emptyset$, then $L_{I,w}$ is conjugate to $L_{I, w'}$ for any $w,w' \in \mathcal W$.
The conjugacy classes of limits can be labeled, with less redundancy, in the following way. First, two subsets $I_1, I_2 \subset \Delta$ give rise to conjugate limit groups $L_{I_1,1}$, $L_{I_2,1}$ if and only if $I_1 = I_2$.
Next, consider a fixed subset $I \subset \Delta$. Let $W_I \subset W$ be the subgroup of the Weyl group which acts trivially on $\frakb_I$. 
Then for any $w \in \mathcal{W}$ and $u \in W_I$, the limit groups $L_{I,w}$ and $L_{I,u w}$ are conjugate.
Let $\mathcal{W}_I$ be a set of representatives of the double cosets $W_I \backslash W /W_{H\cap K}$. For $I = \emptyset$, $W_I$ is trivial, so $\mathcal{W}^{I}= \mathcal{W}$. 
Then the pairs $(I, w)$, where $I \in S$ and $w \in \mathcal W_I$ give all possible limits $L_{I,w}$ of $H$.
We note that this finer enumeration may still have some redundancy; see Remark~\ref{rem:SOpq-redundancy}.
\end{remark}

\begin{remark}
Taking a different point of view, a more detailed analysis of the limits under conjugation can lead to 
 a description of the Chabauty compactification of the affine symmetric space $X= G/H$. 
The Chabauty compactification is defined as follows. Consider the continuous map $\phi: X = G/H \longrightarrow \mathcal{C}(G)$, which sends the left coset $gH$ to the closed subgroup $gHg^{-1}$. Since $\mathcal{C}(G)$, endowed with the Chabauty topology is compact, the closure of $\phi(X)$ defines a compactification of $X$, called the Chabauty compactification \cite{chabauty}. 

In the case when $H=K$ is a maximal compact subgroup the Chabauty compactification of $G/K$ was determined by Guiv\'arch--Ji--Taylor \cite{GJT} and Haettel \cite{Haettel2}. They also show that  the Chabauty compactification of $G/K$ is homeomorphic to the maximal Furstenberg compactification.

The similarities of the above analysis with the definition of the maximal Satake-compactification of $G/H$ as defined in \cite[Theorem 4.10]{Gorodnik_Oh_Shah} suggests that such a homeomorphism might also hold for affine symmetric spaces. 
\end{remark}


\subsection{Limits of symmetric subgroups of $G = \PGL_n\KK$ and $G = \SL_n\KK$}
\label{sec:symmetric-PGL}
We are mainly interested in classifying the limits of sub-geometries of projective geometry whose structure group $H$ is a symmetric subgroup of $G = \PGL_n\KK$, with $\KK= \RR$ or $\CC$. 
So we now apply Theorems~\ref{thm:limit_nice} to the setting of symmetric subgroups in the projective linear group. However, we note that everything described in this section can be easily adapted to the (very similar) case $G = \SL_n \KK$. 
We may choose coordinates so that the Cartan involution $\theta$ that commutes with $\sigma$ is the standard one, i.e.  $\theta(X) = X^{-T}$ for $\PGL_n\RR$ and $\theta(X) = \overline{X}^{-T}$ for $\PGL_n\CC$. Then, thinking of the Lie algebra $\mathfrak g$ as trace-less $n\times n$ matrices, we may choose the Cartan sub-algebra $\fraka$ to be the trace-less (real) diagonal matrices and so the algebra $\frakb$ from the previous section is a sub-space of trace-less diagonal matrices.
These coordinates are particular nice for calculating limit groups.

Let $X \in \frakb$, and let  $E_0, \ldots, E_k$ be the eigenspaces of $X$, listed in order of increasing eigenvalue. Consider the partial flag $\flag = \flag (X)$, defined to be the chain of subspaces $V_0 \supset V_1 \supset \ldots \supset V_k$,
where $$V_j = E_j  \oplus \cdots \oplus E_k$$
and by convention, we take $V_{k+1} = \{0\}$.  We define the \emph{partial flag group} $\PGL(\flag)$ to be the subgroup of $\PGL_n \RR$ which stabilizes $\flag$.  There is a natural surjection
$$\pi_{\flag}:\PGL(\flag) \longrightarrow \operatorname{P} \big(\bigoplus_{i=0}^k \GL(V_i/V_{i+1})\big),$$
where on the right-hand side, $\operatorname{P}$ denotes the projection $\GL_n \to \PGL_n$.
We call the group $U({\flag})=\ker(\pi_{\flag})$ the {\em flag unipotent subgroup}.  It is connected
and unipotent, however in general it is not maximal among unipotent subgroups of $\PGL({\flag})$. Here is a simple corollary of Theorem~\ref{thm:symmetric-limits}:

\begin{corollary}
\label{thm:symmetric-PGL}
Let $L = Z_{H}(X) \ltimes N_+(X)$ be any limit of ${H} \subset \PGL_n \RR $ as in Theorem~\ref{thm:symmetric-limits}, where $X \in \frakb$. Then $L \subset \PGL(\flag)$. Further, the group $Z_{H}(X)$ consists of all elements of $H$ which preserve the decomposition $\RR^n = E_0 \oplus \cdots \oplus E_k$, while $N_+(X) = U_\flag$. So, in a basis respecting the decomposition $\RR^n = E_0 \oplus \cdots \oplus E_k$, every element of $L$ has the form:
 $$\left(\begin{array}{ccccc}
A_0 & 0 & 0 &\cdots & 0\\
* & A_1 & 0 &\cdots & 0\\
* & * & A_2&\cdots& 0\\
\cdots &\cdots &\cdots &\cdots &\cdots \\
* & * & * &\cdots & A_k
\end{array}\right)$$
where $\operatorname{diag}(A_0,\ldots, A_k) \in H \cap \operatorname{P}(\GL(E_0)\oplus \cdots \oplus \GL(E_k))$ and each $*$ denotes a block which may take arbitrary values.
\end{corollary}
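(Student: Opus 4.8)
The plan is to feed the explicit coordinate description of $X$ into the formula $L = Z_H(X) \ltimes N_+(X)$ supplied by Theorem~\ref{thm:symmetric-limits} and read off the two factors directly. In the coordinates fixed above, $\fraka$ consists of trace-free real diagonal matrices, so $X \in \frakb \subset \fraka$ is diagonal; writing $X = \operatorname{diag}(\lambda_0 I_{E_0}, \ldots, \lambda_k I_{E_k})$ with $\lambda_0 < \cdots < \lambda_k$ exhibits the eigenspaces $E_0, \ldots, E_k$ in increasing order of eigenvalue, hence the flag $\flag = \flag(X)$ with $V_j = E_j \oplus \cdots \oplus E_k$. It therefore suffices to identify $N_+(X)$ with $U_\flag$ and $Z_H(X)$ with the subgroup of $H$ preserving the eigenspace decomposition.

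First I would compute $N_+(X)$. Decompose $g \in \PGL_n \RR$ into blocks $g = (g_{ij})$ with $g_{ij}\colon E_j \to E_i$. Since $\exp(tX)$ acts on $E_j$ as the scalar $e^{t\lambda_j}$, conjugation rescales each block:
\begin{equation*}
\big(\exp(tX)^{-1} g \exp(tX)\big)_{ij} = e^{t(\lambda_j - \lambda_i)} g_{ij}.
\end{equation*}
As $t \to \infty$ this tends to $\delta_{ij} I$ exactly when $g_{ii} = I$ for each $i$, $g_{ij} = 0$ for $i < j$ (where $\lambda_j - \lambda_i > 0$), and $g_{ij}$ is unconstrained for $i > j$ (where $\lambda_j - \lambda_i < 0$). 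Hence $N_+(X)$ is precisely the group of block lower-triangular matrices with identity diagonal blocks, which is the flag unipotent group $U_\flag = \ker \pi_\flag$. The one point to get right here is the ordering convention: because $N_+(X)$ is defined via $\exp(tX)^{-1} g \exp(tX)$ and the eigenvalues increase with the block index, one lands on the \emph{lower} triangular unipotent group, matching $A_0$ in the top-left corner, rather than its opposite.

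Next I would compute $Z_H(X)$. As the $\lambda_j$ are pairwise distinct across the $E_j$, a matrix commutes with $X$ if and only if it is block diagonal for $\RR^n = E_0 \oplus \cdots \oplus E_k$, so $Z_G(X) = \operatorname{P}(\GL(E_0) \oplus \cdots \oplus \GL(E_k))$ and $Z_H(X) = H \cap Z_G(X)$ is exactly the set of elements of $H$ preserving the decomposition. Finally I would assemble the semidirect product: $Z_H(X)$ lies in the block-diagonal subgroup, which normalizes $N_+(X)$ and meets it trivially, so every $g \in L = Z_H(X) \ltimes N_+(X)$ factors as $g = d u$ with $d$ its block-diagonal part and $u = d^{-1} g \in U_\flag$; expanding the product gives $g_{ij} = A_i u_{ij}$, which is $A_i$ on the diagonal, arbitrary below it, and zero above it. This is the advertised block lower-triangular form with $\operatorname{diag}(A_0, \ldots, A_k) = d \in H \cap \operatorname{P}(\GL(E_0) \oplus \cdots \oplus \GL(E_k))$; since every such element stabilizes each $V_j$ we get $L \subset \PGL(\flag)$, and $U_\flag = N_+(X) \subset L$ is immediate. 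The argument is essentially a direct unwinding of Theorem~\ref{thm:symmetric-limits}; the only real care needed is the convention bookkeeping noted above, together with the observation that distinctness of the eigenvalues upgrades ``stabilizes the flag'' to ``preserves the decomposition'' for the centralizer factor.
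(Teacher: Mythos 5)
Your proof is correct and follows essentially the same route as the paper's: the paper likewise identifies $N_+(X) = U(\flag)$ by the direct block computation that conjugation by $\exp(-tX)$ multiplies the $(i,j)$ block by $e^{t(\lambda_j - \lambda_i)}$, and takes the identification of $Z_H(X)$ with the block-diagonal elements of $H$ to be clear. Your additional bookkeeping (the ordering convention for the eigenvalues and the explicit $g = du$ factorization yielding the displayed matrix form) merely spells out steps the paper leaves implicit.
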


\begin{proof}
That $Z_H(X)$ consists of all elements of $H$ that preserve the eigenspaces of $X$ is clear. Next, we examine the action by conjugation of $\exp(t X)$ on $\PGL_n$. Writing an element $g \in \PGL_n \KK$ in block form with respect to the decomposition $\KK^n = E_0 \oplus \cdots \oplus E_k$, we see that conjugation by $\exp(-t X)$ multiplies the $(i,j)$ block by the scalar $e^{-t(d_i - d_j)}$, where $d_i$ is the $i^{th}$ eigenvalue of $X$ with eigenspace $E_i$. Therefore, $\exp(-tX)g\exp(tX) \to 1$ as $t \to \infty$ if and only if $g \in \PGL(\flag)$ and $\pi_\flag(g) = 1$.
\end{proof}

\subsection{Symmetric subgroups of $G = \GL_n \RR$}
\label{sec:symmetric-GLn}
Although the general linear group $\GL_n \RR$ is not semi-simple, we abuse terminology and 
call a subgroup $H < \GL_n \RR$ symmetric if there exists an involution $\sigma$, commuting with the Cartan involution $\theta$, such that $H$ is the set of fixed points of $\sigma$.
 It is sometimes more convenient to work with symmetric subgroups in $\GL_n \RR$ than with symmetric subgroups in $\PGL_n \RR$ or $\SL_n \RR$.
 Theorems~\ref{thm:symmetric-limits} and~\ref{thm:limit_nice} do not directly apply in this setting. However, one may determine the limits of a symmetric subgroup $H$ of $\GL_n \RR$ by applying the theorems to either the image $\operatorname{P}H$ under the projection $\operatorname{P}: \GL_n \to \PGL_n$, or to $H \cap \SL_n \RR$. This strategy will be employed in the following sections.

We now list the symmetric subgroups of $\GL_n \RR$. 
Any involution of $\GL_n \RR$ commutes with a Cartan involution $\theta$. So, we take $\theta$ to be the standard Cartan involution, given by $\theta(X) = X^{-t}$ and give a list of all involutions that commute with~$\theta$. 

\begin{itemize}
\item[(1a)] First, there are inner involutions of the form $\sigma(X) = J X J^{-1}$, where $J = -I_p \oplus I_q$ for some $p +q = n$. Note that $J^2 = Id$ and $J \in K$, so $\sigma$ commutes with $\theta$. In this case the symmetric subgroup of fixed points of $\sigma$ is:
$$H_\sigma = Z_{\GL_n\RR}(J) = \GL_p \RR\oplus \GL_q\RR.$$
\item[(1b)]  For $n=2m$ even, let $J$
be the complex structure on ${\mathbb R}^{2m}$ given by $m$ copies of the standard complex structure on $\RR^2$:
$$J=\bigoplus_m \left[\begin{array}{cc}0 & -1 \\1 & 0\end{array}\right].$$
Again, $J$ is orthogonal, so the involution $\sigma$ defined by $\sigma(X) = JXJ^{-1}$ commutes with~$\theta$. Then,
$$H_{\sigma}= Z_{\GL_n\RR}(J) =: \GL_m{\mathbb C}.$$
\item[(2)] Of course there is the Cartan involution $\theta$ itself. In this case $H_\theta = \OO(n)$.
\item[(3)] Let $\phi$ be the involution defined by $\phi(X)=|\det X|^{-2/n}X$. Then $H_\phi =\SL^\pm_n \mathbb R := \{ A \in \GL_n \RR: \det A = \pm 1\}.$
\item[(4)] Assume $n= 2m$ is even. Let $\varphi$ be the involution that is the identity on matrices of positive determinant and multiplication by $-1$ on matrices of negative determinant.
Then $H_{\varphi} = \GL^+_n \RR =: \{A \in \GL_n \RR : \det A > 0\}$ is the identity component of $\GL_n \RR$.
\item[(5)] In fact, $\theta, \phi$ and $\varphi$ commute with each other and with any inner involution $\sigma$ of type (1a) or (1b). So if $\epsilon_1, \epsilon_2, \epsilon_3 \in \{0,1\}$ and $\sigma$ is any inner involution of type (1a) or (1b), then $\tau = \sigma \theta^{\epsilon_1}\phi^{\epsilon_2}\varphi^{\epsilon_3}$ defines an involution.
\end{itemize}

Since the involutions of types (3) and (4) are not so interesting, we will be most interested in the inner involutions $\sigma$ of type (1a) and (1b) and their products $\tau = \sigma \theta$ with the Cartan involution. 
If $\sigma$ is an inner involution of type (1a), then $H_\tau$ is the orthogonal group: $$H_\tau = \OO(p,q).$$
 If $\sigma$ is an inner involution of type (1b), then $H_\tau$ is the symplectic group:
$$H_\tau = \Sp(2m,\RR).$$

\begin{proposition} Every continuous involution of $\GL_n \mathbb R$ is conjugate to one listed above.
\end{proposition}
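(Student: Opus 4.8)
The plan is to first reduce to involutions that commute with the standard Cartan involution $\theta(X)=X^{-T}$, then to pin down the general shape of a continuous automorphism of $\GL_n\RR$, and finally to impose the relation $\sigma^{2}=1$ together with the commutation with $\theta$. For the reduction I would invoke the fact quoted just above the statement, that any involution commutes with \emph{some} Cartan involution; since all Cartan involutions of $\GL_n\RR$ are conjugate by inner automorphisms (they correspond to maximal compact subgroups, which are all conjugate), after replacing $\sigma$ by an inner conjugate we may assume $\sigma$ commutes with $\theta$. It then suffices to show that every continuous involution commuting with $\theta$ is conjugate to one on the list. Throughout, continuity of $\sigma$ gives analyticity, so $\sigma$ is determined by its action on the Lie algebra and on $\pi_0(\GL_n\RR)$.

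The main structural input is the description of $\operatorname{Aut}(\GL_n\RR)$ (for $n\ge 2$). Any continuous automorphism preserves the center $Z=\RR^{\times}I$ and the derived subgroup $[\GL_n\RR,\GL_n\RR]=\SL_n\RR$, hence induces an automorphism of the simple Lie algebra $\mathfrak{sl}_n\RR$. Such an automorphism is inner up to the diagram automorphism of $A_{n-1}$, which at the group level is realized by $\theta$ (its differential is $Y\mapsto -Y^{T}$); thus $\sigma$ agrees on $\SL_n\RR$ with $c_g\circ\rho$ for some $g\in\GL_n\RR$ and some $\rho\in\{\mathrm{id},\theta\}$, where $c_g(X)=gXg^{-1}$. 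The residual ambiguity is a central character: every continuous homomorphism $\GL_n\RR\to Z$ kills the perfect group $\SL_n\RR$, hence factors through $\det$ and has the form $X\mapsto \operatorname{sgn}(\det X)^{a}|\det X|^{s}I$ with $a\in\{0,1\}$, $s\in\RR$. Therefore
\[
\sigma(X)=\operatorname{sgn}(\det X)^{a}\,|\det X|^{s}\; g\,\rho(X)\,g^{-1},\qquad a\in\{0,1\},\ s\in\RR,\ \rho\in\{\mathrm{id},\theta\}.
\]

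Next I would impose $\sigma^{2}=1$. Taking $\rho=\mathrm{id}$ first, a direct computation of $\sigma^{2}$ (which I would not carry out in full here) shows that involutivity is equivalent to two independent conditions: (i) the scalar factor squares to $1$, forcing $s(sn+2)=0$ and $an$ even, i.e. $s\in\{0,-2/n\}$ and, when $a=1$, $n$ even; and (ii) $g^{2}\in Z$. Condition (i) says precisely that the character is trivial, the $\phi$--twist ($s=-2/n$), the $\varphi$--twist ($a=1$, $n$ even), or their product. For (ii), commutation with $\theta$ forces $g^{T}g\in Z$, so $g$ is a positive scalar times an orthogonal matrix; rescaling (which does not change $c_g$) we may take $g\in\OO(n)$, whence $g^{2}=\pm I$. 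If $g^{2}=I$ then $g$ is $\OO(n)$--conjugate to $-I_p\oplus I_q$, giving an inner involution of type (1a); if $g^{2}=-I$ then $n$ is even and $g$ is $\OO(n)$--conjugate to the standard complex structure $J$, giving type (1b). Since $\OO(n)$--conjugation preserves commutation with $\theta$, $\sigma$ is conjugate to a product of a type (1a) or (1b) inner involution with a central--twist involution, i.e. to some $\sigma\,\phi^{\eps_2}\varphi^{\eps_3}$, which is item (5) with $\eps_1=0$. The case $\rho=\theta$ is entirely parallel: the analogous computation (now using $\det\theta(X)=\det(X)^{-1}$) yields the same constraints and produces the products containing $\theta$, i.e. $\eps_1=1$.

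The delicate part is the bookkeeping in condition (i): one must check that in $\sigma^{2}$ the central character genuinely decouples from the factor $c_g\circ\rho$ (the cross terms in $\det$ must be tracked carefully), and that the parity constraint $an$ even is what makes $\varphi$ available only for even $n$, matching item (4). I would also dispatch the low-rank exception $n=2$, where $\theta$ is inner so some of the listed products coincide; this only collapses entries of the list rather than producing new involutions. Modulo these checks, every continuous involution commuting with $\theta$ is conjugate to one of the form $\tau=\sigma\,\theta^{\eps_1}\phi^{\eps_2}\varphi^{\eps_3}$ of item (5), which is the assertion.
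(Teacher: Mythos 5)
Your argument is correct, and it reaches the paper's list by a more explicit route than the paper's own proof, which is quite terse. The paper quotes from the literature that $\operatorname{Out}(\SL_n\RR)$ is cyclic of order two generated by $\theta$ for $n\geq 3$ (trivial for $n=2$), uses the splitting $\GL_n\RR\cong\SL_n^{\pm}\RR\times\RR$ to read off $\operatorname{Out}(\GL_n^{+}\RR)\cong\ZZ_2\times\operatorname{Aut}(\RR)$ generated by $\theta$ and $\phi$, notes that $\varphi$ is the unique outer involution fixing the identity component (even $n$ only), and then treats inner involutions by normalizing $J$ with $J^2$ central. You instead derive the full parametrization $\sigma(X)=\operatorname{sgn}(\det X)^{a}|\det X|^{s}\,g\rho(X)g^{-1}$, $\rho\in\{\mathrm{id},\theta\}$, from the Lie-algebra fact that automorphisms of $\mathfrak{sl}_n\RR$ are inner (by $\GL_n\RR$) up to the diagram automorphism, plus the observation that the residual discrepancy is a central character factoring through $\det$, and then solve $\sigma^2=1$ and $\sigma\theta=\theta\sigma$ as explicit equations. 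This buys two things the paper leaves implicit: first, the mixed inner--outer cases are genuinely verified --- your constraints $s(sn+2)=0$ and $an$ even are precisely what produces the twists $\phi$ and (for even $n$ only) $\varphi$, matching items (3)--(5) of the list; second, the commutation with $\theta$ forces $g^{T}g\in Z$, hence $g\in\OO(n)$ after harmless rescaling, so the normalization of $g$ to $-I_p\oplus I_q$ or the standard complex structure takes place inside $\OO(n)$, which preserves $\theta$-commutation and therefore lands literally on the listed $\theta$-commuting forms (the paper conjugates $J$ by all of $\GL_n\RR$ and does not address this point; your opening reduction via conjugacy of Cartan involutions is the same fact the paper asserts without proof just above the list). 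The two computations you deferred do check out: the decoupling in $\sigma^{2}$ of the character from $c_g\circ\rho$ holds because restricting to the perfect group $\SL_n\RR$ kills the character and yields $g^{2}\in Z$ first, after which the scalar condition follows for general $X$; and in the $\rho=\theta$ branch the determinant bookkeeping actually gives $s(sn-2)=0$, i.e.\ $s\in\{0,2/n\}$ rather than $\{0,-2/n\}$ --- but this is still the $\phi$-twist, since $\theta\phi(X)=|\det X|^{2/n}X^{-T}$, so your conclusion stands as stated.
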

\begin{proof}  
The outer (continuous) automorphism group of $\SL_n{\mathbb R}$ is cyclic
of order $2$ generated by the Cartan involution $\theta$ for $n\ge3$ and trivial otherwise (see Theorem~4.5 of \cite{delp}).
Since $\GL_n \RR\cong\SL_n^{\pm}\times\RR$, it follows that the outer automorphisms of the identity component $\GL^+_n \RR$ are 
$\operatorname{Out}(\GL^+_n \RR)\cong{\mathbb Z}_2\times \operatorname{Aut}({\mathbb R})$, 
generated by  $\theta$ and $\phi$. If $n$ is even, then there is exactly one nontrivial outer involution of $\GL_n \RR$ which fixes the identity component, namely $\varphi$. If $n$ is odd, there are no such outer involutions.

Every inner involution is given by conjugation
by some $J$ with $J^2$ central. 
We may assume $\det J=\pm1$. Then $J$ is conjugate to one of the matrices listed.
\end{proof}

\noindent We next apply Theorem~\ref{thm:limit_nice} to determine the limits in the most interesting cases.

\subsection{Limits of $\GL_m{\mathbb C}$ in $\GL_{2m}{\mathbb R}$}

Consider the involution $\sigma$ of $\GL_{2m} \RR$ defined by $\sigma(X) = JXJ^{-1}$ where $$J = \bpmat 0 & I_m \\ -I_m & 0 \epmat.$$
Then the fixed point set of $\sigma$ naturally identifies with the group $\GL_m \CC$.
The standard basis $e_1, \ldots, e_m, e_{m+1}, \ldots, e_{2m}$ is compatible with the complex structure $J$ in the sense that $e_{j+m} = Je_j$ and $e_j = -J e_{j+m}$. 
Note that the center $Z(\GL_{2m} \RR)$ is contained in $\GL_m \CC$. Therefore, to determine the limits of $\GL_m \CC$ inside of $\GL_{2m} \RR$, we pass to the projective general linear group via the quotient map $\operatorname{P}: \GL_{2m}\RR \to \PGL_{2m} \RR$. The image of $\GL_m \CC$ is the subgroup $H = \operatorname{P}( \GL_m \CC)  \cong \GL_m \CC / \RR^*$. Note that $\sigma$ is well-defined on $\PGL_{2m} \RR$ and that $H$ is precisely the fixed point set of $\sigma$. So we may apply Theorem~\ref{thm:limit_nice} to determine the limits of $H$ inside of $G = \PGL_{2m} \RR$.
All limits of $\GL_m \CC$ in $\GL_{2m} \RR$ are of the form $\operatorname{P}^{-1} L$ where $L \subset \PGL_{2m} \RR$ is a limit of $H$.

Note that $\sigma$ commutes with the standard Cartan involution $\theta$. The $-1$ eigenspace $\frakq$ of $\sigma$ is given by matrices of the form
$$ \bpmat A & B\\ B & -A \epmat $$
A  maximal abelian subgroup $\frakb$ of $\frakp \cap \frakq$ is given by elements $X$ of the form
$$X = \bpmat D & 0 \\ 0 & -D \epmat$$
where $D = \operatorname{diag}(d_1,\ldots,d_m)$ is a diagonal $m \times m$ matrix.  The system of positive simple roots of $\frakg\frakl_{2m} \RR$ with respect to $\frak b$ can be chosen to be $$\Delta = \left\{ d_{i+1} - d_{i} \right\}_{i=1}^{m-1} \cup \{ 2 d_m \}.$$
In this case the inclusion $W_{H \cap K} \hookrightarrow W$ is an isomorphism; both Weyl groups simply permute the diagonal entries of $D$ and also the signs.
Therefore, $\mathcal W = \{1\}$ and we may take $\overline{\frakb^+}$ to be the collection of diagonal matrices $X$ as above, where $0 \leq d_1 \leq \ldots \leq d_m $.
Then, by Theorem~\ref{thm:limit_nice}, the conjugacy classes of limits of $H$ in $G$ are enumerated by subsets $I \subset \Delta$. For a given subset $I \subset \Delta$, the corresponding limit group $$L_I = Z_H(\frakb_I) \ltimes N_I = Z_H(X) \ltimes N_+(X)$$ is the limit under conjugacy by $\exp(t X)$ as $t \to \infty$, where $X \in \frakb_I^{+}$. 
Let $$-\lambda_k < -\lambda_{k-1} < \cdots < \lambda_0 = 0 < \lambda_1 < \cdots < \lambda_k$$
denote the eigenvalues of $X$ (symmetric under negation). Note that either $\lambda_j$ or $- \lambda_j$ is a diagonal entry of $D$.
Note also that the eigenspaces $E_{\lambda_j}$ satisfy $E_{\lambda_j} = J E_{-\lambda_j}$, and in particular the zero eigenspace $E_0$ is a complex subspace, invariant under $J$.
Hence any element $g \in Z_H(X)$ preserves the eigenspace decomposition $\RR^{2m} = E_0 \bigoplus_{j=1}^k E_{\lambda_j} \oplus E_{-\lambda_j}$. The action of $g$ on $E_0$ is $J$-linear. The action of $g$ on $E_{\lambda_j} \oplus E_{-\lambda_j}$ is also $J$-linear and further preserves the real and imaginary parts $E_{\lambda_j}$ and $E_{-\lambda_j} = J E_{\lambda_j}$. Therefore, the matrix for the action of $g$ on $E_{\lambda_j} \oplus E_{-\lambda_j}$ has the form $$\bpmat A_j & 0 \\ 0 & A_j \epmat$$ in a basis which is the union of a basis for $E_{\lambda_j}$ and $J$ times that basis (which is a basis for $E_{-\lambda_j}$). This characterizes $Z_H(X)$.
Next, the flag $\flag$ defining the unipotent part $U(\flag) = N_I = N_+(X)$ of $L$ (see Section~\ref{sec:symmetric-PGL}) is given by the sub-spaces $$V_{-k} \supset V_{-(k-1)} \supset \cdots V_0 \supset V_{1} \supset \cdots \supset V_{k}$$
where $V_j = E_{\lambda_j} \oplus \cdots \oplus E_{\lambda_{k}},$ where $\lambda_{-j} := -\lambda_j$. 

We may explicitly describe the corresponding limit $\operatorname{P}^{-1} L$ of $\GL_m \CC$ in $\GL_{2m} \RR$.
In a basis respecting the ordered eigen-space decomposition $$\RR^{2m} := E_{\lambda_k} \oplus \cdots \oplus E_0 \oplus \cdots E_{-\lambda_k}$$
and such that the basis elements for $E_{-\lambda_j}$ are $J$ times the basis elements for $E_{\lambda_j}$ (where $\lambda_j > 0$), the elements of the limit group $\operatorname{P}^{-1} L$ are exactly the matrices of the form:

$$\bpmat A_k & & & & & & & &\\  * & A_{k-1} & & & & & & & \\ \vdots & \vdots & \ddots & & & & & & \\ * & * & \cdots & A_{1} & & & & & \\ * & * & \cdots & * & \bmat A_0 & B_0\\ -B_0 & A_0 \emat & & &\\
 * & * & \cdots & * & * & A_{1} & &\\ \vdots & \vdots & \cdots & \vdots & \vdots & \vdots & \ddots &\\ * & * & \cdots & * & * & * & \cdots & A_{k-1}\\ * & * & \cdots & * & * & * & \cdots & * & A_k \epmat$$

 \noindent where each $A_j$ is a square matrix with dimension $\operatorname{dim}(E_j)$, which is  equal to the multiplicity of the eigenvalue $\lambda_j$ of $X$. Each $*$ is an arbitrary block matrix of the appropriate dimensions, and the (blank) upper diagonal entries are all zero. The central block $\bpmat A_0 & B_0\\ -B_0 & A_0 \epmat$ describes the $J$-linear transformations of the $0$-eigenspace $E_0$ of $X$; this block only appears if the root $2 d_m \in I$.
  The group $\operatorname{P}^{-1}( Z_H(\frakb_I))$ is the subgroup for which all $*$ blocks are zero. The group $\operatorname{P}^{-1} N_I \cong N_I$ is the subgroup for which all diagonal blocks $A_j$ are the identity (and $B_0 = 0$).

\begin{remark}
In the case that $D = \operatorname{diag}(1,1,\ldots,1)$, we have only two eigenspaces $E_1$ and $JE_1$. The limit group is then the centralizer of the matrix $\bpmat 0 & 0 \\ I_m & 0 \epmat$ which we should think of as a degeneration of the complex structure. The limit group is isomorphic to the general linear group in dimension $m$ over the ring $\RR[\epsilon]/(\epsilon^2)$.
\end{remark}


\subsection{Limits of $\Sp(2m)$ in $\GL_{2m} \RR$}

Consider $H = \Sp(2m, \RR)$ inside of $\GL_{2m} \RR$. Note that $H \subset \SL_{2m} \RR$. To determine the conjugacy classes of limits of $H$ in $\GL_{2m} \RR$ it suffices to determine the limits of $H$ in $G = \SL_{2m} \RR$. We apply Theorem~\ref{thm:limit_nice}.
The defining involution for $H$, at the Lie algebra level, is $\tau(X) = \sigma(\theta(X)) = -JX^TJ^{-1}$ where $\theta$ is the standard Cartan involution, and $\sigma$ is conjugation by a complex structure $J$ fixed by $\theta$. In this case, it will be more convenient to take $$J = \bpmat J_0 &0 &\cdots & 0 \\ 0 & J_0 & & 0\\ 0 & 0 & \ddots & & \\ 0 & 0 & \cdots & J_0 \epmat,$$ where $J_0 = \bpmat 0 & 1\\ -1 & 0\epmat$.  The $-1$ eigenspace $\frakq$ of $\tau$ is given by matrices of the form $(A_{jk})_{j,k=1}^m$ where each $A_{jk}$ is a $2 \times 2$ block with 
$A_{jk} = -J_0 A_{kj}^T J_0$. The diagonal blocks of elements of $\frakq$ have the form $A_{jj} = \bpmat d_j & 0\\ 0 & d_j \epmat$.
A maximal abelian sub-algebra $\frakb$ of $\frakp \cap \frakq$ is given by matrices of the form
$$D = \bpmat D_1 &0 &\cdots & 0 \\ 0 & D_2 & & 0\\ 0 & 0 & \ddots & & \\ 0 & 0 & \cdots & D_m \epmat,$$ where $D_j =  \bpmat d_j & 0\\ 0 & d_j \epmat$ and $d_1 + \cdots + d_m = 0$.  The system of positive simple roots can be chosen to be 
$$\Delta = \left\{ d_{i+1} - d_{i} \right\}_{i=1}^{m-1}.$$
In this case the inclusion $W_{H \cap K} \hookrightarrow W$ is an isomorphism; both Weyl groups simply permute the block diagonal entries of $D$.
Therefore, $\mathcal W = \{1\}$ and $\overline{\frakb^+}$ is the collection of diagonal matrices $X$ as above, where $d_1 \leq \ldots \leq d_m$. 
Then, by Theorem~\ref{thm:limit_nice}, the conjugacy classes of limits of $H$ in $G$ are enumerated by subsets $I \subset \Delta$. For a given subset $I \subset \Delta$, the corresponding limit group $$L_I = Z_H(\frakb_I) \ltimes N_I = Z_H(X) \ltimes N_+(X)$$ is the limit under conjugacy by $\exp(t X)$ as $t \to \infty$, where $X \in \frakb_I^{+}$.  Here, $Z_H(\frakb_I) = Z_H(X)$ has block form:
$$Z_H(\frakb_I) = \bpmat \Sp(2 m_1) & 0 & \cdots & 0 \\ 0 & \Sp(2 m_2) & \cdots & 0\\ \vdots & \vdots & \ddots & \vdots \\ 0 & 0 & \cdots & \Sp(2m_k) \epmat,$$
where each block $\Sp(2m_j)$ is the symplectic group of dimension $2 m_j$ consisting of those elements of $\Sp(2m)$ which preserve the $j^{th}$ eigenspace of $X$ and act as the identity on the other eigenspaces of~$X$. The flag $\flag$ preserved by $L$  is given by $V_0 \supset \ldots \supset V_k$, where $V_j = E_j \oplus \cdots \oplus E_k$ is the direct sum of the last $k-j+1$ eigenspaces $E_i$ of $X$, where the $E_i$ are indexed in order of increasing eigenvalue. The flag unipotent subgroup $N_I = U(\flag)$ (see Section \ref{sec:symmetric-PGL}) has block structure:
$$N_I = \bpmat I_{m_1}& 0 & \cdots & 0 \\ * & I_{m_2} & \cdots & 0\\ \vdots & \vdots & \ddots & \vdots \\ * & * & \cdots & I_{m_k} \epmat,$$
where all lower diagonal blocks are labeled $*$ to denote that the entries are arbitrary.

\subsection{Limits of $\GL_p \oplus \GL_q$ in $\GL_{p+q} \RR$}

Consider the involution $\sigma$ defined by $\sigma(X) = JXJ^{-1}$ where $J = \bpmat -I_p & 0 \\ 0 & I_q \epmat.$
We assume without loss of generality that $q \geq p$ and set $r = q -p$.
The fixed set of $\sigma$ naturally identifies with $\GL_p \RR \oplus \GL_q \RR$.
Note that the center $Z(\GL_{p+q})$ is contained in $\GL_p \oplus \GL_q$. Therefore the involution $\sigma$ is well-defined on the quotient $\PGL_{p+q} \RR$. The image $H = \operatorname{P}(\GL_p \oplus \GL_q)$ under the projection $\operatorname{P}: \GL_{p+q} \to \PGL_{p+q}$ is therefore a symmetric subgroup of $\PGL_{p+q} \RR$ and we may apply Theorem~\ref{thm:limit_nice} to determine the limit groups $L$ of $H$. Then any limit of $\GL_p \RR \oplus \GL_q \RR$ in $\GL_{p+q} \RR$ is conjugate to $\operatorname{P}^{-1} L$ where $L$ is some limit of $H$.

Note that $\sigma$ commutes with the standard Cartan involution~$\theta$. 
The $-1$ eigenspace $\frakq$ of $\sigma$ is given by matrices of the form
$$\bpmat 0_{p \times p} & B \\ C & 0_{q \times q} \epmat$$
where $B$ is $p \times q$ and $C$ is $q \times p$.
A maximal abelian sub-algebra $\frakb$ of $\frakp \cap \frakq$ is given by matrices of the form
$$X = \bpmat 0_{p \times p} & D & \\ D & 0_{p\times p} &  \\  & & 0_{r \times r} \epmat$$
where $D = \operatorname{diag}(d_1, \ldots, d_p)$ is a $p \times p$ diagonal matrix. The system of positive simple roots can be chosen to be $$\Delta = \left\{ d_{i+1} - d_{i} \right\}_{i=1}^{p-1} \cup \{ 2 d_p \}.$$
In this case the inclusion $W_{H \cap K} \hookrightarrow W$ is an isomorphism; both Weyl groups simply permute the diagonal entries of $D$ and also the signs.
Therefore, $\mathcal W = \{1\}$ and $\overline{\frakb^{+}}$ is the collection of matrices $X$ as above, where $0 \leq d_1 \leq \ldots \leq d_p$.
Then, by Theorem~\ref{thm:limit_nice}, the conjugacy classes of limits of $H$ in $G$ are enumerated by subsets $I \subset \Delta$. For a given subset $I \subset \Delta$, the corresponding limit group $$L_I = Z_H(\frakb_I) \ltimes N_I = Z_H(X) \ltimes N_+(X)$$ is the limit under conjugacy by $\exp(t X)$ as $t \to \infty$, where $X \in \frakb_I^{+}$.

Let's see more explicitly what this group $L_I$ looks like. The eigenvalues of $X$ are 
$$-\lambda_k < -\lambda_{k-1} < \cdots < \lambda_0 = 0 < \lambda_1 < \cdots < \lambda_k$$
 where if $j \neq 0$,  $\lambda_j = 2 d_{i_j}$ is twice one of the diagonal elements of $D$. The multiplicity $m_j$ of $\lambda_j$ is determined by the subset $I$. The eigenvalue $\lambda_0 = 0$ has multiplicity equal to $r + 2m_0$ where $m_0$ is the number of the $d_i$ which are zero. 
Note also that $E_{\lambda_j} = J E_{-\lambda_j}$. Hence an element $h \in Z_H(\frakb_I)$ preserves both $E_{\lambda_j}$ and $E_{-\lambda_j}$ and has identical matrix on both subspaces, when the basis for $E_{-\lambda_j}$ is taken to be $J$ times the basis for $E_{\lambda_j}$; we work in such a basis. Also, the zero eigenspace $E_0$ is invariant under $J$; the elements of $H$ which preserve $E_0$ form a copy of $\GL(r+m_0)\oplus \GL(m_0)$.

Next, the flag $\flag$ defining the unipotent part $U(\flag) = N_I = N_+(X)$ of $L$ (see Section~\ref{sec:symmetric-PGL}) is given by the sub-spaces $V_{-k} \supset V_{-(k-1)} \supset \cdots V_0 \supset V_{1} \supset \cdots \supset V_{k}$
where $V_j = E_{\lambda_j} \oplus \cdots \oplus E_{\lambda_{k}},$ and where $\lambda_{-j} := -\lambda_j$. Therefore $N_I = U(\flag)$ is the unipotent group which is block lower diagonal in a basis respecting the ordered decomposition $\RR^n = E_{-\lambda_k} \oplus \cdots \oplus E_0 \oplus \cdots \oplus E_{\lambda_k}$ into eigenspaces.
Therefore, in such a basis, the elements of the corresponding limit $\operatorname{P}^{-1} L_I$ of $\GL_p \oplus \GL_q$ in $\GL_{p+q}$ have matrix form

$$\bpmat A_k & & & & & & & &\\ * & A_{k-1} & & & & & & & \\ \vdots & \vdots & \ddots & & & & & & \\ * & * & \cdots & A_{1} & & & & & \\ * & * & \cdots & * & \bmat A_0 & 0\\ 0 & B_0 \emat & & &\\
 * & * & \cdots & * & * & A_{1} & &\\ \vdots & \vdots & \cdots & \vdots & \vdots & \vdots & \ddots &\\ * & * & \cdots & * & * & * & \cdots & A_{k-1}\\ * & * & \cdots & * & * & * & \cdots & * & A_k \epmat$$
  
\noindent where for $j \neq 0$, the matrix $A_j$ is the square $m_j \times m_j$ matrix representing both the action on $E_{\lambda_j}$ and on $E_{-\lambda_j} = J E_{\lambda_j}$, and the block matrix $\bpmat A_0 & 0 \\ 0 & B_0 \epmat$ represents an element of $\GL(r+m_0)\oplus \GL(m_0)$ corresponding to the action on $E_0$. As always, the $*$ blocks are arbitrary.

\subsection{Limits of $\OO(p,q)$ in $\GL_{p+q}\RR$}
\label{sec:Opq}

Set $p+q = n$. Let $\tau$ be the involution of $\GL_{p+q}$ defined by $\tau(g) = Jg^{-T}J^{-1}$ where $J$ is the matrix
$$J = \bpmat -I_p & \\ & I_q\epmat.$$
The group $\OO(p,q)$ is the fixed point set of $\tau$. Consider the image $\PO(p,q)$ of $\OO(p,q)$ under the projection $\operatorname{P}: \GL_{n} \RR \to \PGL_{n} \RR$.  To determine the limits of $\OO(p,q)$ in $\GL_{n} \RR$, it suffices to determine the limits of $H = \PO(p,q)$ in $G = \PGL_{n} \RR$. For, $\OO(p,q)$ is the intersection of $\operatorname{P}^{-1}( \PO(p,q))$ with the subgroup $\SL^\pm$ of matrices of determinant $\pm 1$. Therefore, all limits of $\OO(p,q)$ are of the form $\operatorname{P}^{-1}(L) \cap \SL^\pm$ where $L$ is a limit of $H$ in $G$. Note also that $\tau$ preserves the center $Z(\GL_{n})$, therefore $\tau$ descends to a well-defined involution of the projective general linear group; its fixed point set is exactly $H = \PO(p,q)$. So we may apply Theorem~\ref{thm:limit_nice} to determine the limits of $H$.

At the Lie algebra level, our involution has the form $\tau(X) =  -JX^TJ^{-1}.$
A maximal abelian sub-algebra $\frakb$ of $\frakp \cap \frakq$ is in this case given by the full Cartan sub-algebra $\fraka$ of traceless diagonal matrices 
$$D = \bpmat d_1 & & & \\ & d_2 & & \\ & & \ddots & \\ & & & d_n \epmat.$$ The  system of positive simple roots can be chosen to be  $$\Delta = \left\{ d_{i+1} - d_{i} \right\}_{i=1}^{n-1}.$$
In this case, the Weyl group $W$ is the full symmetric group $S_{p+q}$ permuting the standard basis of $\RR^{p+q}$. The closed Weyl chamber $\overline{\frakb^+}$ corresponding to $\Delta$ consists of the diagonal matrices $X$, as above, such that $d_1 \leq d_2 \leq \cdots \leq d_{n}$.
The Weyl group $W_{H \cap K}$ for $\Sigma(\frakg_{\tau \theta}, \frakb)$ is given by the permutations $S_p \times S_q$ of the standard basis which preserve the signature; in other words $W_{H \cap K}$ permutes the first $p$ basis vectors and the last $q$ coordinate directions independently. 
A closed Weyl chamber $\mathcal W \cdot \overline{\frakb^+}$ for $\Sigma(\frakg_{\tau \theta}, \frakb)$ is given by the diagonal matrices $\operatorname{diag}(d_1, \ldots, d_n)$ such that $d_i \leq d_{i+1}$ if $i = 1,\ldots, p-1$ or if $i = p+1,\ldots, p+q = n$. Then $\mathcal W$ consists of permutations $\varpi$ of the following form. For some $1 \leq k \leq p$ (assuming $p \leq q$), and two sets of $k$ indices $1 \leq i_1 < \cdots < i_k \leq p$ and $p+1 \leq j_1 < \cdots < j_k \leq p+q = n$, we have that $\varpi(i_r) = p+r$ and $\varpi(j_r) = p -k +r$ and the remaining $p-k$ indices between $1$ and $p$ are mapped in order to the first (smallest) $p-k$ indices, while the remaining $q -k$ indices between $p+1$ and $n$ are mapped in order to last (largest) $q-k$ indices. In fact, this specific form of $\mathcal W$ is not important; we may work with any collection $\mathcal W$ of coset representatives of $W/W_{H \cap K}$.

By Theorem~\ref{thm:limit_nice}, the conjugacy classes of limits of $H$ in $G$ are enumerated (with redundancy) by subsets $I \subset \Delta$ and elements $w \in \mathcal W$. For a given subset $I \subset \Delta$ and $w \in \mathcal W$, the corresponding limit group $$L_{I ,w}= Z_{H_w}(\frakb_I) \ltimes N_I = Z_{H_w}(X) \times N_+(X)$$ is the limit of $H_w = wHw^{-1}$ under conjugacy by $\exp(t X)$ as $t \to \infty$, where $X \in \frakb_I^{+}$. 
Let $E_0, E_1, \ldots, E_k$ be the eigenspaces of $X$ listed in order of increasing eigenvalue. In our chosen coordinates, each $E_i$ is a span of consecutive coordinate directions. Now, $H_w$ is the fixed point set of the involution $\tau_w$ defined by $\tau_w(g) = J_w g^{-T} J_w^{-1}$, where $J_w = w J w^{-1}$ is a diagonal form of signature $(p,q)$ but with the $p$ $(-1)$'s and $q$ $(+1)$'s arranged in a (possibly) different order. Let $(p_i, q_i)$ denote the signature of $J_w$ when restricted to $E_i$. Then $p_1 + \cdots  + p_k = p$ and $q_1 + \cdots + q_k = q$, and  $Z_{H_w}(\frakb_I)$ is seen to have the block diagonal form $$Z_{H_w}(\frakb_I) = \operatorname{P} \bpmat \OO(p_1,q_1) & & & \\ & \OO(p_2,q_2) & & \\ & & \ \ \ddots \ \ & \\ & & & \OO(p_k,q_k) \epmat.$$
The full limit group $L_{I,w} \subset \PGL(\flag)$ preserves the flag $\flag$ consisting of subspaces $V_0 \supset \cdots \supset V_k$, where $V_j = E_j \oplus \cdots \oplus E_k$. The unipotent part $N_I = U(\flag)$ has the form
$$N_I = \operatorname{P} \bpmat I_{p_1+q_1} & & & \\ * & I_{p_2+q_2} & & \\ \vdots & \vdots & \ddots & \\ * & * & \cdots & I_{p_k+q_k} \epmat$$
where the upper diagonal blocks, denoted by $*$, are arbitrary.

\begin{remark}
\label{rem:SOpq-redundancy}
In this example, we may see explicitly that the finer enumeration of limit groups described by Remark~\ref{rem:finer-list} may still have redundancy.
For consider the case $p=2$, $q=2$, and consider $I = \{ d_2 - d_1, d_4 - d_3\}$. Then $W_I$ consists of four permutations, namely the group generated by transposing the first and second basis vector and transposing the third and fourth basis vector; so that $W_I = W_{H \cap K}$. One easily computes that $| W_I \backslash W / W_{H \cap K}| = 3$; representatives are given by the permutation $w_1$ that transposes the second and third basis vector, a permutation $w_2$ that exchanges the first and second basis vectors with the third and fourth, and the identity permutation $e$. However, there are only two conjugacy classes of limit group, because $L_{I,w_2} = L_{I,e}$.
\end{remark}

\subsubsection{Partial flag of quadratic forms}
Finally, we note that if $L_{I,w}$ is as above, then the corresponding limit groups $\operatorname{P}^{-1}(L_{I,w}) \cap \SL^\pm$ of $\OO(p,q)$ in $\GL_n \RR$ are easily described. We will investigate these limit groups and their corresponding geometries in depth in the next section. Here we introduce some notation to give an invariant description of the limit groups.
Let $\flag$ be the partial flag formed the chain of subspaces $V_0 \supset V_1 \supset \ldots \supset V_k$. 
A {\em partial flag of quadratic forms} $\pfbeta=(\beta_0,\cdots,\beta_k)$ on the partial flag $\flag$ is a collection of quadratic forms $\beta_i$ defined on each quotient
$V_i/V_{i+1}$ of the partial flag $\flag$. We denote the linear transformations which preserve $\flag$ and induce an isometry of each $\beta_i$ by $\Isom(\pfbeta,{\flag})$.
The {\em signature} of a non-degenerate quadratic form $\beta$ is $\eps(\beta)=(n_-,n_+)$, where $n_-$ (resp. $n_+$) is the dimension of the largest subspace on which $\beta$ is negative (resp. positive) definite.
Two quadratic forms have the same isometry group iff they
are scalar multiples of each other thus $\OO(p,q)\cong \OO(p',q')$ iff $\{p,q\}=\{p',q'\}$.
The {\em signature} of a partial flag of quadratic forms $\pfbeta=(\beta_0,\cdots,\beta_k)$ is
$$\eps(\pfbeta)=(\eps(\beta_0),\cdots,\eps(\beta_k))=((p_0,q_0)\cdots (p_k,q_k)).$$ 
The signature $\eps(\pfbeta)$ determines $\Isom (\pfbeta, \flag)$ up to conjugation.  When $\flag$ is adapted to the standard basis and all $\beta_i$ are diagonal, we will use the notation 
\begin{align*}
\Isom(\pfbeta,\flag) &=: \OO((p_0,q_0),\cdots,(p_k,q_k))\\ &= \bpmat \OO(p_0,q_0) & &  \\ &  \ \ \ddots \ \ & \\ & & \OO(p_k,q_k) \epmat \ltimes \bpmat I_{p_0+q_0} & & & \\ * & I_{p_1+q_1} & & \\ \vdots & \vdots & \ddots & \\ * & * & \cdots & I_{p_k+q_k} \epmat.
\end{align*}
The conjugacy class of this group is unchanged by scaling some $\beta_i$. In fact $\OO((p_0,q_0),\cdots,(p_k,q_k))$ is conjugate to $\OO((p_0',q_0'),\cdots,(p_k',q_k'))$ if and only if for all $i =0,\ldots,k$, $(p_i,q_i) = (p_i',q_i')$ or $(p_i,q_i) = (q_i',p_i')$.
 As a special case observe that when $\flag$ is a full flag, then $\Isom(\pfbeta, \flag)$ is conjugate to 
$\OO((1,0),\cdots,(1,0))$, which is the group of  lower triangular matrices with diagonal entries $\pm1$.
We will adopt the convention that the signature $(p,0)$ can be denoted by $(p)$ so this group is also
written as $\OO((1),(1),\cdots,(1))$. This is in agreement with denoting $\OO(n,0)$ by $\OO(n)$. 
The application of Theorem~\ref{thm:limit_nice} above shows:

\begin{theorem}
\label{thm:limits-Opq}
The limits of $\OO(p,q)$ (resp. $\PO(p,q)$) inside of $\GL_{p+q} \RR$ are all of the form $\Isom(\pfbeta, \flag)$ (resp. $\PP \Isom(\pfbeta, \flag)$). Further $\Isom(\flag, \pfbeta)$ (resp. $\PP \Isom(\pfbeta, \flag)$) is a limit of $\OO(p,q)$ (resp. $\PO(p,q)$) if and only if the signature $((p_0,q_0),\ldots,(p_k,q_k))$ of $\pfbeta$ satisfies 
\begin{equation*} p_0 + \cdots + p_k = p \ \text{ and } \ q_0 + \cdots + q_k = q,
\end{equation*}  
after exchanging $(p_i , q_i)$ with $(q_i,p_i)$ for some collection of indices~$i$ in $\{0,\ldots,k\}$.
\end{theorem}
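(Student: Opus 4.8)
The plan is to read the theorem off from Theorem~\ref{thm:limit_nice} together with the explicit block computation carried out earlier in this section, and then to transfer the result from the projective group $\PGL_n\RR$ back to $\GL_n\RR$.

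First I would establish that every limit has the claimed form. By Theorem~\ref{thm:limit_nice}, any limit of $H=\PO(p,q)$ in $G=\PGL_n\RR$ is conjugate to some $L_{I,w}=Z_{H_w}(\frakb_I)\ltimes N_I$ with $I\subset\Delta$ and $w\in\mathcal W$. The computation preceding the statement identifies $Z_{H_w}(\frakb_I)$ as the block-diagonal group $\operatorname{P}\operatorname{diag}(\OO(p_0,q_0),\ldots,\OO(p_k,q_k))$ and $N_I=U(\flag)$ as the flag-unipotent group associated with $\flag\colon V_0\supset\cdots\supset V_k$, where $(p_i,q_i)$ is the signature of the restriction of the form $J_w$ to the $i$-th eigenspace $E_i$ of $X$. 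Comparing this with the definition of $\PP\Isom(\pfbeta,\flag)$, I conclude $L_{I,w}=\PP\Isom(\pfbeta,\flag)$ with $\pfbeta$ of signature $((p_0,q_0),\ldots,(p_k,q_k))$, which proves the first assertion.

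Next I would pin down the signature constraint. Since $J_w=wJw^{-1}$ is conjugate to $J=-I_p\oplus I_q$ by a permutation, it has global signature $(p,q)$; restricting to the decomposition $\RR^n=E_0\oplus\cdots\oplus E_k$ gives $\sum_i p_i=p$ and $\sum_i q_i=q$ exactly. This yields necessity, once I recall from the discussion just before the theorem that the conjugacy class of $\OO((p_0,q_0),\ldots,(p_k,q_k))$ is unchanged by replacing any $(p_i,q_i)$ with $(q_i,p_i)$; hence a limit's signature need only satisfy the sum condition after some collection of block-wise swaps. For sufficiency I would run the construction in reverse: given a signature satisfying the swap-and-sum condition, choose $I$ so that the block dimensions are $n_i=p_i+q_i$, and choose a representative $w\in\mathcal W$ so that $J_w$ places exactly $p_i$ of the $(-1)$'s and $q_i$ of the $(+1)$'s in $E_i$. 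Such a $w$ exists because $W_{H\cap K}=S_p\times S_q$ permutes the $(-1)$- and $(+1)$-directions independently, so conjugates of $J$ realize every distribution of signs across the blocks; by the final clause of Theorem~\ref{thm:limit_nice}, $L_{I,w}$ is then achieved as a limit.

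Finally I would descend to $\GL_n\RR$. Since $\OO(p,q)=\operatorname{P}^{-1}(\PO(p,q))\cap\SL^\pm$ and conjugation commutes with $\operatorname{P}$ and with intersecting by $\SL^\pm$, every limit of $\OO(p,q)$ has the form $\operatorname{P}^{-1}(L)\cap\SL^\pm$ for $L$ a limit of $\PO(p,q)$; a direct check shows $\operatorname{P}^{-1}(\PP\Isom(\pfbeta,\flag))\cap\SL^\pm=\Isom(\pfbeta,\flag)$, because every isometry of a nondegenerate form has determinant $\pm1$ and the flag-unipotent part lies in $\SL$. The main bookkeeping obstacle is matching the intrinsically-defined group $\Isom(\pfbeta,\flag)$ to the concrete $L_{I,w}$ and verifying that the pair $(I,w)$ can be chosen to realize any admissible signature---in particular that the freedom in $w\in\mathcal W$ accounts for precisely the allowed block-wise swaps of $(p_i,q_i)$, with no further constraint on the first block (in contrast to the homogeneous-space version Theorem~\ref{thm:limits-Hpq_intro}, where $p_0>0$ is required).
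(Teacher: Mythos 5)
Your proposal is correct and takes essentially the same route as the paper: the paper obtains Theorem~\ref{thm:limits-Opq} exactly as you do, by running the application of Theorem~\ref{thm:limit_nice} in Section~\ref{sec:Opq} (identifying $Z_{H_w}(\frakb_I)\ltimes N_I$ with $\PP\Isom(\pfbeta,\flag)$, with $\sum_i p_i = p$ and $\sum_i q_i = q$ forced because $J_w$ is a rearrangement of $J$, and every sign distribution across the blocks realized by the choice of $w\in\mathcal W$), combined with the observations that limits of $\OO(p,q)$ are of the form $\operatorname{P}^{-1}(L)\cap\SL^\pm$ and that swapping $(p_i,q_i)\leftrightarrow(q_i,p_i)$ does not change the conjugacy class of $\Isom(\pfbeta,\flag)$. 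Your explicit verification that $\operatorname{P}^{-1}\big(\PP\Isom(\pfbeta,\flag)\big)\cap\SL^\pm=\Isom(\pfbeta,\flag)$ and your remark distinguishing the group-level statement (swaps allowed at $i=0$) from the homogeneous-space version merely spell out details the paper leaves implicit.
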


The groups $\PP \Isom(\pfbeta, \flag)$ are the structure groups for many interesting geometries, to be described in Section~\ref{sec:pfqf}. As a corollary to Theorem~\ref{thm:limits-Opq}, we characterize all limits of these groups.

\begin{corollary}\label{cor:limits-pfqf}
Every conjugacy limit of $\Isom(\flag, \pfbeta)$ (resp. $\PP \Isom(\flag, \pfbeta)$) is of the form $\Isom(\flag', \pfbeta')$ (resp. $\PP \Isom(\flag', \pfbeta')$). Further, up to conjugation, the flag $\flag' = \{V'_j\}$ is a refinement of $\flag = \{ V_i\}$ and the signature $\eps(\pfbeta')$ is a refinement of the signature $\eps(\pfbeta)$ in the sense that
\begin{align}
\label{eqn:signature-condition}
p_i &= \sum_{j: V_i \supset V_j' \supsetneq V_{i+1}} p_j' & q_i = \sum_{j: V_i \supset V_j' \supsetneq V_{i+1}} q_j'
\end{align}
after exchanging $(p_j', q_j')$ with $(q_j', p_j')$ for some collection of indices~$j$.
Any $\Isom(\flag', \pfbeta')$ as above is realized as a limit of $\Isom(\flag, \pfbeta)$ under some sequence of conjugacies.
\end{corollary}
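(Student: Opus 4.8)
The plan is to prove the two inclusions of the equivalence separately, handling the \emph{form} of the limit, the \emph{realizability} (reverse) direction, and the \emph{refinement} (forward) direction in turn; I expect the forward direction to be the crux. Throughout I would work with $\Isom(\flag,\pfbeta)$ inside $\GL_n\RR$ (with $n=\sum_i(p_i+q_i)$), since the projective statement and the $\OO(p,q)$ statement are interchanged exactly as in Section~\ref{sec:Opq}, via $\operatorname{P}$ and intersection with $\SL^{\pm}$, so it suffices to treat one version.

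\textbf{Form of the limit.} First I would observe that by Theorem~\ref{thm:limits-Opq} the group $\Isom(\flag,\pfbeta)$ is itself a conjugacy limit of some $\OO(p,q)$ with $p=\sum_i p_i$, $q=\sum_i q_i$ (after reversing some blocks). Since the conjugacy-limit relation is transitive — this is precisely the diagonal-sequence argument in the proof of Theorem~\ref{thm:partial-order}, which applies verbatim to geometric limits and not merely their identity components — any limit $L$ of $\Isom(\flag,\pfbeta)$ is again a limit of $\OO(p,q)$, and hence is of the form $\Isom(\flag',\pfbeta')$ by Theorem~\ref{thm:limits-Opq}. The remaining task is to pin down $\flag'$ and $\eps(\pfbeta')$.

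\textbf{Realizability.} For the reverse direction, suppose $\flag'$ refines $\flag$ and the signature condition \eqref{eqn:signature-condition} holds. I would conjugate $\Isom(\flag,\pfbeta)$ by a one-parameter group $\exp(sY)$, with $Y$ block-diagonal with respect to $\flag$, choosing $Y|_{V_i/V_{i+1}}$ to be a diagonal element whose eigenvalue ordering degenerates the diagonal block $\OO(p_i,q_i)$ to $\Isom(\flag_i',\pfbeta_i')$ as $s\to\infty$; this is possible by Theorem~\ref{thm:limits-Opq} applied inside $\GL(V_i/V_{i+1})$, where $\flag_i'$ denotes the part of $\flag'$ lying strictly between $V_i$ and $V_{i+1}$. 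The key point is that $\exp(sY)$, being block-diagonal for $\flag$, normalizes the flag-unipotent group $U(\flag)$ and preserves it setwise; so the whole of $U(\flag)$ survives in the limit, and together with the newly created within-block strictly-lower-triangular directions it fills out exactly $U(\flag')$. Hence $\lim_{s\to\infty}\exp(sY)\Isom(\flag,\pfbeta)\exp(-sY)=\Isom(\flag',\pfbeta')$, which is the asserted realizability.

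\textbf{Refinement.} This is the heart of the matter. Write $L=\lim_n c_n\Isom(\flag,\pfbeta)c_n^{-1}$. Each $V_i\in\flag$ is $\Isom(\flag,\pfbeta)$-invariant, so after passing to a subsequence the subspaces $c_nV_i$ converge in the compact Grassmannians to subspaces $\hat V_i$, and for $\ell=\lim_n c_n h_n c_n^{-1}\in L$ one computes $\ell\hat V_i=\lim_n c_n h_n c_n^{-1}(c_nV_i)=\lim_n c_nV_i=\hat V_i$, so $L$ preserves the flag $\hat\flag=\{\hat V_i\}$. Since $\dim\hat V_i=\dim V_i$, the flag $\hat\flag$ has the same type as $\flag$, hence is conjugate to it; conjugating $L$ we may assume $\hat\flag=\flag$, that is $c_n\flag\to\flag$. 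Using a local section of $\GL_n\RR\to\GL_n\RR/\PGL(\flag)$ near $[\flag]$ I would then factor $c_n=g_n p_n$ with $p_n$ preserving $\flag$ and $g_n\to 1$; since conjugation by $g_n\to1$ acts trivially on Chabauty limits, $L=\lim_n p_n\Isom(\flag,\pfbeta)p_n^{-1}$ with each $p_n$ now \emph{preserving} $\flag$. Each $p_n$ then induces maps on the associated graded pieces $V_i/V_{i+1}$ through $\pi_\flag$ (Section~\ref{sec:symmetric-PGL}), the induced action of $\Isom(\flag,\pfbeta)$ on $V_i/V_{i+1}$ being exactly $\OO(p_i,q_i)$. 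Passing to the associated graded, the induced action of $L$ on $V_i/V_{i+1}$ is a conjugacy limit of $\OO(p_i,q_i)$ inside $\GL(V_i/V_{i+1})$, hence of the form $\Isom(\flag_i',\pfbeta_i')$ with $\eps(\pfbeta_i')$ summing to $(p_i,q_i)$ up to block reversals (Theorem~\ref{thm:limits-Opq}); this is precisely \eqref{eqn:signature-condition}. Assembling the $\flag_i'$ inside the blocks of $\flag$ yields a flag $\flag'$ refining $\flag$ with $L\subset\Isom(\flag',\pfbeta')$, and equality holds by the dimension count $\dim L=\dim\Isom(\flag,\pfbeta)=\dim\Isom(\flag',\pfbeta')$ (Proposition~\ref{prop:limit-dimension}).

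The hard part will be the reduction to flag-preserving conjugations: verifying that $c_n\flag\to\flag$ lets one absorb the transverse part of $c_n$ into a factor $g_n\to1$ that does not affect the geometric limit, and then checking that $\pi_\flag$ genuinely commutes with the geometric limit, so that the block actions of $L$ are the limits of the block actions of $p_n\Isom(\flag,\pfbeta)p_n^{-1}$ and not something strictly larger. Once this is secured, the signature bookkeeping reduces to a block-by-block application of Theorem~\ref{thm:limits-Opq}, and the realizability direction above shows every refinement satisfying \eqref{eqn:signature-condition} is attained.
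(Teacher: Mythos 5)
Your proposal is correct in substance and follows essentially the same route as the paper's proof: reduce, via compactness of the space of flags of a given type, to conjugating sequences that preserve $\flag$ (your factorization $c_n = g_np_n$ with $g_n \to 1$ fills in a step the paper dispatches in one sentence), then project to the graded quotients $V_i/V_{i+1}$, apply Theorem~\ref{thm:limits-Opq} block by block, assemble the blockwise flags and forms into $(\flag',\pfbeta')$, and realize every admissible refinement by a one-parameter conjugation (your block-diagonal $Y$ is the same computation as the paper's, which instead phrases it via nested eigenspace data $X, X' \in \frakb$ realizing both groups as limits of a common $\OO(p,q)$). Where you diverge is in how equality $L = \Isom(\flag',\pfbeta')$ is clinched. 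The paper resolves what you call ``the hard part'' directly: since the flag-preserving $c_n$ normalize $U(\flag)$, one has $U(\flag) \subset L$ outright, and any convergent blockwise sequence can then be corrected within $c_n \Isom(\flag,\pfbeta) c_n^{-1}$ by elements of $U(\flag)$ (factor $c_nh_nc_n^{-1} = d_nu_n$ with $d_n$ block diagonal and $u_n \in U(\flag)$), so blockwise limits lift back into $L$ and $\pi_i(L)$ is \emph{exactly} $\Isom(\flag^{(i)},\pfbeta^{(i)})$, components included.

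Your substitute — the dimension count via Proposition~\ref{prop:limit-dimension} — has a hairline gap as written: containment $L \subset \Isom(\flag',\pfbeta')$ with equal dimension identifies only the identity components, and the PFQF groups have many components (the $\pm$ choices in each $\OO(p_j',q_j')$ block), so $L$ could a priori be a proper finite-index subgroup. The gap is repairable from within your own argument: your first paragraph shows, via transitivity of conjugacy limits (the diagonal argument of Theorem~\ref{thm:partial-order}, which indeed applies verbatim to full geometric limits) together with Theorem~\ref{thm:limits-Opq}, that $L$ is itself a \emph{full} group $\Isom(\flag'',\pfbeta'')$; equality of identity components then forces $\flag'' = \flag'$ (the unipotent radical of the identity component determines the flag) and the forms to agree blockwise up to scale, whence $L = \Isom(\flag',\pfbeta')$. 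But you must make this inference explicitly — as stated, ``equality holds by the dimension count'' does not follow, and this is the one point where your write-up needs supplementing, either by the argument just sketched or by the paper's $U(\flag) \subset L$ lifting trick.
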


\begin{proof}
Let $H = \Isom(\flag, \pfbeta)$.
Consider a conjugacy limit $L = \lim_{n \to \infty} c_n H c_n^{-1}$. The space of flags having the same type as $\flag$ is compact. Thus, we may assume that $c_n \in \PGL(\flag)$ for all~$n$, and therefore that $L \subset \GL(\flag)$. Note that $U(\flag)$ is preserved by conjugation by $c_n$. Therefore $U(\flag) \subset L$. It remains to determine the projections $\pi_i(L)$ where $\pi_i: \GL(\flag) \to \GL(V_i/V_{i+1})$ is the natural projection map. Now, $\pi_i(H) = \Isom(\beta_i) \cong \OO(p_i,q_i)$. The projection $\pi_i(L)$ is the limit of $\pi_i(H)$ under conjugation by the projections $\pi_i(c_n)$. Hence, Theorem~\ref{thm:limits-Opq} implies that $\pi_i(L) = \Isom(\flag^{(i)}, \pfbeta^{(i)})$, where $(\flag^{(i)}, \pfbeta^{(i)})$ is a partial flag of quadratic forms for $V_i/V_{i+1}$. Then, let $\flag'$ be the flag of all lifts $\pi_i^{-1}(V^{(i)}_j)$ of subspaces $V^{(i)}_j$ of each flag $\flag^{(i)}$. Let $\pfbeta'$ be the flag of quadratic forms $\pi_i^*\beta^{(i)}_j$ on those subspaces determined by pullback. Then, $L = \Isom(\flag', \pfbeta')$ is as in the statement of the Corollary.

Next, to see that  any $\Isom(\pfbeta')$ as in the Corollary is achieved as a limit, note that if the condition~(\ref{eqn:signature-condition}) is satisfied, then both $\Isom(\pfbeta)$ and $\Isom(\pfbeta')$ are limits of some $\OO(p,q)$. In fact, the elements $X,X' \in \frakb$ determining the respective limits $\Isom(\pfbeta)$ and $\Isom(\pfbeta')$ of $\OO(p,q)$ have the property that any eigenspace of $X'$ is contained in an eigenspace of $X$. Further if $E_{\lambda_1}', E_{\lambda_2}'$ are eigenspaces of $X'$ corresponding to eigenvalues $\lambda_1 < \lambda_2$, then $E_{\lambda_1}' \subset E_{\mu_1}$ and $E_{\lambda_2}' \subset E_{\mu_2}$, where $E_{\mu_1}, E_{\mu_2}$ are eigenspaces of $X$ corresponding to eigenvalues $\mu_1 \leq \mu_2$. It is then easy to see that 
$$\lim_{t \to \infty} \exp(t X') \Isom(\pfbeta) \exp(-t X') = \Isom(\pfbeta').$$
\end{proof}

\section{The geometry of a partial flag of quadratic forms}
\label{sec:pfqf}

In Section~\ref{sec:Opq}, we showed that the geometric limits of $\OO(p,q)$ inside of $\GL_{p+q} \RR$ are the groups $\Isom(\pfbeta, \flag)$ preserving a partial flag of quadratic forms. In this section we will investigate the corresponding limit geometries.

\subsection{$\XX(p,q)$ geometry and its limits}
\label{sec:Hpq}
Let $\beta$ denote a quadratic form on $\RR^n$ of signature $(p,q)$. We assume that $p > 0$. Then $\operatorname{P}\Isom(\beta) \cong \PO(p,q)$ acts transitively on the space
$$\XX(p,q) := \{ [x] \in \RP^{n-1} : \beta(x) < 0 \}$$
with stabilizer isomorphic to $\OO(p-1,q)$. Therefore $\XX(p,q)$ is a semi-Riemannian space of dimension $n-1 = p+q-1$ and signature $(p-1, q)$. 
We list some familiar cases:
\begin{itemize}
\item $(\XX(n,0), \PO(n))$ is doubly covered by spherical geometry $\mathbb S^{n-1}$. 
\item $(\XX(1,n-1), \PO(1,n-1))$ is the projective model for hyperbolic geometry $\HH^{n-1}$.
\item $(\XX(2,n-2), \PO(2,n-2))$ is the projective model for anti de Sitter (AdS) geometry $\AdS^{n-1}$.
\item $(\XX(n-1,1), \PO(n-1,1))$ is the projective model for de Sitter (dS) geometry $\dS^{n-1}$.
\end{itemize}

We now describe the possible limits of $(\XX(p,q), \PO(p,q))$ as a sub-geometry of $(\RP^{n-1}, \PGL_n\RR)$.
Consider a partial flag $\flag$ equipped with a flag of quadratic forms $\pfbeta = (\beta_0, \ldots, \beta_k)$ as in Section~\ref{sec:Opq}. Let $(p_i,q_i)$ be the signature of $\beta_i$.  Define the domain $\XX(\pfbeta) \subset \RP^{n-1}$ by $$\XX(\pfbeta) := \{ [x] \in\RP^{n-1} : \beta_0(x) < 0\}.$$
Then $\PP \Isom(\flag, \pfbeta)$ acts transitively on $\XX(\pfbeta)$. When the flag and quadratic forms are adapted the standard basis, we denote $\XX(\pfbeta)$ by 
\begin{equation}\label{PFG-notation}
\XX(\pfbeta) = \XX((p_0,q_0),\ldots,(p_k,q_k)).
\end{equation}
Note that $\XX(\pfbeta)$ is non-empty if and only if $p_0 > 0$ and that as a set, the space $\XX((p_0,q_0)\ldots (p_k,q_k))$ depends only on the first signature $(p_0,q_0)$ and the dimension $n = \sum_i (p_i + q_i)$. However, we include all $k$ signatures in the notation as a reminder of the structure determined by $\PO((p_0,q_0),\ldots,(p_k,q_k))$.

\begin{theorem}\label{thm:limits-Hpq}
The conjugacy limits of $(\XX(p,q), \PO(p,q))$ inside $(\RP^{n-1}, \PGL_n)$ are all of the form $(\XX(\pfbeta), \PP \Isom(\flag, \pfbeta))$. Further, $\XX(\pfbeta)$ is a limit of $\XX(p,q)$ if and only if $p_0 \neq 0$, and the signatures $((p_0,q_0),\ldots,(p_k,q_k))$ of $\pfbeta$ partition the signature $(p,q)$ in the sense that
\begin{equation*}
p_0 + \cdots + p_k = p \ \ \text{  and  } \ \ q_0 + \cdots + q_k = q,
\end{equation*}
after exchanging $(p_i, q_i)$ with $(q_i, p_i)$ for some collection of indices~$i$ in $\{1,\ldots,k\}$ (the first signature $(p_0,q_0)$ must \emph{not} be reversed).
\end{theorem}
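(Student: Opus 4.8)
The plan is to leverage the group-level classification already in hand (Theorem~\ref{thm:limits-Opq}) and to supplement it with an analysis of how the domain degenerates, the latter being where the asymmetry between the first signature $(p_0,q_0)$ and the remaining ones enters. Write $H=\PO(\beta)$ with $\beta$ the standard form of signature $(p,q)$ and $Y=\XX(p,q)=\{[x]:\beta(x)<0\}$. Suppose $(Z,L)$ is a conjugacy limit of $(Y,H)$, realized by a sequence $g_n\in\PGL_n\RR$ as in Definition~\ref{def:limit-subgeom}. First I would reduce to a diagonal one-parameter conjugation: by Theorem~\ref{thm:symmetric-limits} and the reduction in its proof we may factor $g_n=k_nb_nh_n$ with $k_n\to k\in K$, $b_n\in B$ diagonal and $h_n\in H$; since $H=\PO(\beta)$ already preserves $Y$, the factors $h_n$ disappear from $g_nY=k_nb_nY$, while $k_n\to k$ merely post-conjugates the limit by a convergent element. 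Hence, up to replacing $(Z,L)$ by a conjugate geometry (harmless, as we classify up to conjugacy), we may assume $g_n=\exp(t_nX)$ for a diagonal $X\in\frakb$ with $t_n\to\infty$.

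With this normalization the group limit is controlled by the block computation behind Theorem~\ref{thm:limits-Opq}: if $E_0,\dots,E_k$ are the eigenspaces of $X$ listed by strictly increasing eigenvalue $d_0<\dots<d_k$, then $L=\PP\Isom(\pfbeta,\flag)$, where $\flag$ is the descending flag with $V_j=E_j\oplus\cdots\oplus E_k$ and $\beta_i=\beta|_{E_i}$. Because $\beta$ and $X$ are simultaneously diagonal, the $E_i$ are mutually $\beta$-orthogonal, so $\beta=\bigoplus_i\beta|_{E_i}$ and the signatures $(p_i,q_i)=\eps(\beta|_{E_i})$ satisfy $\sum_i p_i=p$ and $\sum_i q_i=q$ \emph{exactly}, with no reversals. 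This already shows every limit group has the asserted form.

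The crux is the limit of the domain, which I would extract from the leading-order behavior of $\beta$ under $\exp(-tX)$. For $y=\sum_i y_i$ with $y_i\in E_i$, orthogonality gives $\beta(\exp(-tX)y)=\sum_i e^{-2td_i}\beta(y_i)$, so multiplying by the positive scalar $e^{2td_0}$ yields $e^{2td_0}\beta(\exp(-tX)y)\to\beta(y_0)=\beta_0(\bar y)$ as $t\to\infty$, where $\bar y\in V_0/V_1\cong E_0$. Since scaling by a positive constant does not change the sign, a point $[y]$ lies in $g_nY=\{[x]:\beta(\exp(-t_nX)x)<0\}$ for all large $n$ when $\beta_0(\bar y)<0$, and lies outside it for all large $n$ when $\beta_0(\bar y)>0$. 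Consequently the limit point $z\in Z$ must satisfy $\beta_0(\bar z)\leq 0$; since $Z$ is open and $L$ acts transitively on it, this forces $Z=\{\beta_0<0\}=\XX(\pfbeta)$, with a witness $z\in\mathbb P(E_0)$ satisfying $\beta(z)<0$, which exists precisely when $p_0\geq1$. This pins the first form together with its orientation: $(p_0,q_0)=\eps(\beta|_{E_0})$ is a genuine sub-signature and cannot be reversed, since $p_0$ counts actual negative directions of $\beta$, and $p_0\neq0$ is forced by nonemptiness. This yields the ``only if'' direction and the claim that all limits have the form $(\XX(\pfbeta),\PP\Isom(\flag,\pfbeta))$.

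For the converse and for reconciling the statement's allowance of reversals, I would note that for $i\geq1$ the domain $\{\beta_0<0\}$ is insensitive to $\beta_i$, while $\OO(p_i,q_i)\cong\OO(q_i,p_i)$ are conjugate inside $\PGL(\flag)$ by a transformation fixing $E_0$; thus the limit geometry remembers $(p_i,q_i)$ only up to reversal for $i\in\{1,\dots,k\}$ but remembers $(p_0,q_0)$ exactly. Hence, given signatures with $p_0\geq1$ that partition $(p,q)$ after reversing some subset $S\subseteq\{1,\dots,k\}$, I would set $(\tilde p_i,\tilde q_i)=(q_i,p_i)$ for $i\in S$ and $(p_i,q_i)$ otherwise (never reversing $i=0$), build a diagonal $\beta$ of signature $(p,q)$ whose restriction to a chosen eigenspace $E_i$ has signature $(\tilde p_i,\tilde q_i)$, and take $X$ diagonal with strictly increasing eigenvalues on $E_0,\dots,E_k$. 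The two computations above then give $\lim_{t\to\infty}\exp(tX)(\XX(\beta),\PO(\beta))\exp(-tX)=(\XX(\pfbeta),\PP\Isom(\flag,\pfbeta))$, realizing the geometry as a limit. I expect the only genuinely delicate point to be this orientation bookkeeping for the first block—making precise that the degeneration selects $\{\beta_0<0\}$ rather than $\{\beta_0>0\}$—which is exactly what the rescaled leading-term limit settles.
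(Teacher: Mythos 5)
Your proposal is correct and follows essentially the same route as the paper: the group-level classification is quoted from Theorem~\ref{thm:limits-Opq}, the domain is pinned down by the two-open-orbit dichotomy together with the sign of the positively rescaled limiting form $\lambda_n c_n^*\beta_0 \to \beta_0'$ at a witness point (which is exactly why $(p_0,q_0)$ cannot be reversed while the $\OO(p_i,q_i)=\OO(q_i,p_i)$ ambiguity permits reversals for $i\geq 1$), and the converse is realized by a diagonal one-parameter conjugation with a fixed witness in $\mathbb{P}E_0$. The only cosmetic difference is that you make the reduction to diagonal $\exp(t_nX)$ explicit and compute the rescaled limit $e^{2td_0}\beta(\exp(-tX)y)\to\beta_0(\bar y)$ by hand, where the paper cites the corresponding convergence from the proof of Theorem~\ref{thm:symmetric-limits} for a general conjugating sequence.
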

\noindent More generally:
\begin{theorem}
\label{thm:limits-Xbeta}
Every conjugacy limit of $(\XX(\pfbeta), \PP\Isom(\flag, \pfbeta))$ is of the form $(\XX(\pfbeta'), \PP\Isom(\flag', \pfbeta'))$. Further, up to conjugation, the flag $\flag' = \{V'_j\}$ is a refinement of $\flag = \{ V_i\}$ and the signature $\eps(\pfbeta')$ is a refinement of the signature $\eps(\pfbeta)$ in the sense that
\begin{align*}
p_i &= \sum_{j: V_i \supset V_j' \supsetneq V_{i+1}} p_j' & q_i = \sum_{j: V_i \supset V_j' \supsetneq V_{i+1}} q_j'
\end{align*}
after exchanging $(p_j', q_j')$ with $(q_j', p_j')$  for some collection of indices~$j$ excluding $j=0$ (the first signature $(p_0',q_0')$ must not be reversed).
Any such geometry $(\XX(\pfbeta'), \PP\Isom(\flag', \pfbeta'))$ is realized as a limit of $(\XX(\pfbeta), \PP\Isom(\flag, \pfbeta))$ under some sequence of conjugacies (provided $p_0' > 0$).
\end{theorem}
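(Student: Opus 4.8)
The group-level classification is already furnished by Corollary~\ref{cor:limits-pfqf}: every conjugacy limit of the structure group $H = \PP\Isom(\flag, \pfbeta)$ is conjugate to $\PP\Isom(\flag', \pfbeta')$, where $\flag'$ refines $\flag$ and $\eps(\pfbeta')$ refines $\eps(\pfbeta)$ up to reversal of the $(p_j', q_j')$ at an arbitrary collection of indices. The plan is therefore to take this group convergence as given and to concentrate on the two genuinely geometric points: identifying the limiting domain as $\XX(\pfbeta')$ via Definition~\ref{def:limit-subgeom}, and extracting the extra constraints $p_0' > 0$ and the absence of a reversal at the index $j = 0$. By the Remark after Theorem~\ref{thm:KAH} together with the explicit one-parameter construction in the proof of Corollary~\ref{cor:limits-pfqf}, we may assume the conjugating sequence is $c_n = \exp(t_n X')$ with $X' \in \frakb$ diagonal and $t_n \to \infty$. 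Let $F_0, \ldots, F_m$ be the eigenspaces of $X'$ in increasing order of eigenvalue, so that $V_j' = F_j \oplus \cdots \oplus F_m$ and $\beta_j'$ is the induced form on $V_j'/V_{j+1}' \cong F_j$. The ordering statement in Corollary~\ref{cor:limits-pfqf} places the slowest direction $F_0$ inside the slowest block, $F_0 \subseteq E_0 \cong V_0/V_1$, so that $\beta_0'$ is the honest restriction $\beta_0|_{F_0}$, with no sign change.

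To locate the limit domain we apply Definition~\ref{def:limit-subgeom}. Conjugating yields $c_n \XX(\pfbeta) = \{[x] : \beta_0(\pi(\exp(-t_n X')x)) < 0\}$, where $\pi : \RR^n \to V_0/V_1 \cong E_0$ is the projection annihilating $V_1$. For a point $x$ with nonzero $F_0$-component $x_0$, the $E_0$-component of $\exp(-t_n X')x$ is dominated as $n \to \infty$ by the slowest-decaying term $e^{-t_n \mu_0} x_0$, so $\beta_0(\pi(\exp(-t_n X')x))$ acquires the sign of $\beta_0'(x_0)$ for all large $n$. Choosing $z$ with $z_0 \in F_0$ and $\beta_0'(z_0) < 0$ --- possible exactly when $p_0' > 0$ --- gives simultaneously $z \in \XX(\pfbeta')$ and $z \in c_n \XX(\pfbeta)$ for all large $n$. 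Since $\PP\Isom(\flag', \pfbeta')$ acts transitively on the open set $\XX(\pfbeta')$ containing $z$, Definition~\ref{def:limit-subgeom} identifies the limit subgeometry as $(\XX(\pfbeta'), \PP\Isom(\flag', \pfbeta'))$.

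The same dominant-term computation supplies the constraints. Existence of the limit subgeometry requires a point $z$ with $\beta_0(z_0) < 0$, which forces $p_0' > 0$ and pins $\beta_0'$ to the un-reversed restriction $\beta_0|_{F_0}$; this is precisely why the index $j = 0$ must be omitted from the reversals permitted by Corollary~\ref{cor:limits-pfqf}, since reversing $\beta_0'$ leaves the group unchanged but replaces the domain by the complementary (or empty) region, which is never a limit of the $c_n \XX(\pfbeta)$. For the converse, given refinement data obeying the signature condition with $p_0' > 0$, we run the explicit conjugacy $\exp(t X')$ of Corollary~\ref{cor:limits-pfqf}: the groups converge to $\PP\Isom(\flag', \pfbeta')$, and the argument above produces the certifying point $z$, so the geometry is realized as a limit. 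We expect the main obstacle to be the careful execution of the domain limit in the sense of Definition~\ref{def:limit-subgeom}: verifying that the surviving leading form is exactly $\beta_0|_{F_0}$, so that its sign, and hence the first signature, cannot flip, and that no $L$-orbit other than $\XX(\pfbeta')$ can arise --- the group convergence itself being already in hand.
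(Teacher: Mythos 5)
Your realization direction is sound and essentially matches the paper's: run the one-parameter conjugacy $\exp(tX')$ from Corollary~\ref{cor:limits-pfqf} and certify the limit with a point whose $F_0$-part is $\beta_0'$-negative (the paper takes $y_\infty \in \XX(\pfbeta') \cap \mathbb{P}E_0'$, which is even fixed by $\exp(tX')$, making the verification immediate). The genuine gap is in the classification direction, at the sentence ``we may assume the conjugating sequence is $c_n = \exp(t_n X')$.'' The Remark after Theorem~\ref{thm:KAH} applies to \emph{symmetric} subgroups of $\PGL_n\RR$, and $\PP\Isom(\flag,\pfbeta)$ is not symmetric once the flag is nontrivial (it contains the unipotent radical $U(\flag)$); the one-parameter construction in Corollary~\ref{cor:limits-pfqf} is only its realization half. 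More seriously, even where a reduction of the \emph{group} limit to a one-parameter limit is available, it does not settle the \emph{subgeometry}: the limit group $L$ has up to two open orbits in $\RP^{n-1}$, namely $\XX(\pfbeta')$ and the set $\XX^+$ of $\beta_0'$-positive lines, and knowing $L$ only up to conjugacy leaves open the possibility that some exotic sequence $(c_n)$ produces $\XX^+$ as the limit domain --- which is exactly the scenario your ``never a limit'' claim must exclude. Your dominant-eigenvalue computation excludes it only for sequences of the special form $\exp(t_n X')$; for an arbitrary $(c_n)$ the ``slowest direction'' need not be eigendirection-stable, and the domination argument does not apply as stated.

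The paper closes exactly this hole without any reduction: for an arbitrary conjugating sequence $(c_n)$, it extracts from the proof of Theorem~\ref{thm:symmetric-limits} (applied through the block projection used in the proof of Corollary~\ref{cor:limits-pfqf}) that $\beta_0'$ is a limit of \emph{positively} rescaled pullbacks $\lambda_n c_n^* \beta_0$ with $\lambda_n > 0$. Evaluating at the point $y_\infty$ furnished by Definition~\ref{def:limit-subgeom}, which satisfies $\beta_0(c_n^{-1} y_\infty) < 0$ for all large $n$, gives $\beta_0'(y_\infty) = \lim_{n\to\infty} \lambda_n \beta_0(c_n^{-1} y_\infty) \leq 0$, so the limit orbit cannot be $\XX^+$ and must be $\XX(\pfbeta')$; this single inequality simultaneously forces $p_0' > 0$ and the prohibition on reversing $(p_0', q_0')$. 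To repair your write-up, replace the reduction step by this sign-persistence of the rescaled pulled-back forms, or else prove separately that every subgeometry limit (not merely every group limit) is conjugate to a one-parameter diagonal limit --- a statement which, as it stands, is close to what the theorem itself asserts and so cannot simply be assumed.
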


\begin{proof}
Let $L = \PP\Isom(\pfbeta', \flag')$ be a limit of $\PP\Isom(\pfbeta, \flag)$ under some conjugating sequence $(c_n)$ as in Corollary~\ref{cor:limits-pfqf}. Suppose that $(\XX(\pfbeta), \PP\Isom(\pfbeta, \flag))$ limits, under conjugation by $(c_n)$ to $(Y,L)$ (in the sense of Definition~\ref{def:limit-subgeom}). 
Then $Y \subset \RP^n$ is an open orbit of $L$.
There are at most two such orbits. $\XX(\pfbeta')$ is an open orbit of $L$, non-empty if and only if $p_0' > 0$. The set of positive lines, $\XX^+$, with respect to $\beta_0'$ is the other open orbit of $L$, non-empty if and only if $q_0' > 0$. Let us now show that $Y = \XX(\pfbeta')$.

 By definition, there is some $y_\infty \in Y$ such that for all $n$ sufficiently large, $y_\infty \in c_n \XX(\pfbeta)$; in other words, $\beta_0(c_n^{-1} y_\infty) < 0$.  It is easy to see, from the proof of Theorem~\ref{thm:symmetric-limits}, that $\beta_0'$ is the limit of $\lambda_n c_n^* \beta_0$ where $\lambda_n > 0$ is a sequence of positive scalars. Therefore, 
 $$\beta_0'(y_\infty) = \lim_{n \to \infty} \lambda_n \beta_0(c_n^{-1} y_\infty) \leq 0.$$
 It follows that $Y\neq \XX^+$, so we must have $Y = \XX(\pfbeta')$ as desired. 
 
Next we show that any $\XX(\pfbeta')$ as in the Theorem is achieved. As in the proof of Corollary~\ref{cor:limits-pfqf}, we note that $\XX(\pfbeta')$ and $\XX(\pfbeta)$ are both limits of some $\XX(\beta) = \XX(p,q)$ under conjugation by the one parameter groups $\exp(t X)$ resp. $ \exp(t X')$ and that the groups satisfy 
$\PP\Isom(\pfbeta') = \lim_{t \to \infty} \exp(t X') \PP\Isom(\pfbeta) \exp(-t X')$.
Further, every eigen-space of $X'$ is contained in an eigenspace of $X$ and the eigenspace $E_0'$ corresponding to the smallest eigenvalue of $X'$ is contained in the eigenspace $E_0$ corresponding to the smallest eigenvalue of $X$. Then $\beta_0'$ agrees with $\beta_0$ on $E_0'$ and is zero on all other eigenspaces of $X'$. 
Let $y_\infty \in \XX(\pfbeta') \cap \mathbb P E_0'$. Then, since $\beta_0'(y_\infty) < 0$ we have that $\beta_0(y_\infty) < 0$ and so $y_\infty \in \XX(\pfbeta)$. Since $\exp(t X') y_\infty = y_\infty$, we have shown that the conjugate sub-geometries $(\exp(t X') \XX(\pfbeta), \exp(t X') \PP \Isom(\pfbeta) \exp(-t X'))$ limit to the sub-geometry  $(\XX(\pfbeta'), \PP\Isom(\pfbeta'))$.
\end{proof}

\subsection{The geometry of $\XX(\pfbeta)$}
Let us now describe the geometry of $(\XX(\pfbeta), \PP \Isom(\flag, \pfbeta))$. We assume that the flag $\flag$ and quadratic forms $\pfbeta$ are adapted to the standard basis so that we may use the notation~(\ref{PFG-notation}).
Let $j \in \{1,\ldots,k\}$.
The action of $\Isom(\flag, \pfbeta)$ preserves the flag of quotient spaces 
$$V_0/V_j \supset V_1/V_j \supset \cdots \supset V_{j-1}/V_j,$$ which we denote $\flag/V_j$, as well as the induced flag of quadratic forms $\beta_0, \ldots, \beta_{j-1}$, which we denote by $\pfbeta/V_j$. Then $\XX(\pfbeta/V_j)$ identifies with $\XX((p_0,q_0)\ldots(p_{j-1},q_{j-1}))$ and $\Isom(\flag, \pfbeta)$ acts on $\XX(\pfbeta/V_j)$ by transformations of $\Isom(\flag/V_j, \pfbeta/V_j) \cong \OO((p_0,q_0),\ldots,(p_{j-1},q_{j-1}))$.

There is a projection map $\pi_{j}$:
\begin{displaymath}
    \xymatrix{   V_{j}/V_{j+1} \ar[r] & \XX(\pfbeta/V_{j+1}) \ar[d]^{\pi_{j}}\\
    & \XX(\pfbeta/V_{j})}
\end{displaymath}
 which is the restriction of the natural projection map $\mathbb P(V_0/V_{j+1}) \setminus \mathbb P(V_{j}/V_{j+1}) \to \mathbb P(V_0/V_{j}).$
 The fiber $\pi_j^{-1}([x + V_{j}]) = \{[x + v + V_{j+1}] : v \in V_{j}/V_{j+1}\}$ identifies with $V_{j}/V_{j+1}$; the identification depends on the choice of representative $x$. Via the identification, $\beta_{j}$ induces an affine pseudo-metric $\varrho_{j}$ of signature $(p_{j},q_{j})$ on each fiber, defined by:
$$\varrho^2_{j}(x + v, x+w) := \beta_{j}(v-w).$$
 The metric is well-defined, provided that the representative $x + V_j$ is always chosen to satisfy $\beta_0(x+V_j) = -1$ (there are two such choices). The action of $\Isom(\flag, \pfbeta)$ on $\XX(\pfbeta/V_{j+1})$ preserves this fibration by affine spaces and preserves the affine $(p_{j},q_{j})$ metric $\varrho_j^2$ on the fibers. 
 
Alternatively, we will use the notation
\begin{displaymath}
    \xymatrix{   \mathbb A^{p_{j},q_{j}}\ar[r] & \XX((p_0,q_0),\ldots,(p_{j},q_{j}))\ar[d]^{\pi_{j}}\\
    & \XX((p_0,q_0),\ldots, (p_{j-1},q_{j-1})) }
\end{displaymath}
where the notation $\mathbb A^{p_{j},q_{j}}$ indicates that the fibers are affine spaces equipped with an affine pseudo-metric of signature $(p_{j},q_{j})$.
Combining this information for all possible values of $j$, we see that $\XX(\pfbeta)$ is equipped with an \emph{iterated affine bundle} structure (see Figure~\ref{fig:iterated-bundle}), and the fibers at each iteration are equipped with an invariant affine pseudo-metric:

\begin{equation} \label{tower}
    \xymatrix{   \mathbb A^{p_{k},q_{k}}\ar[r] & \XX((p_0,q_0),\ldots,(p_{k},q_{k}))\ar[d]^{\pi_{k}}\\
     \mathbb A^{p_{k-1},q_{k-1}}\ar[r] & \XX((p_0,q_0),\ldots, (p_{k-1},q_{k-1}))\ar[d]^{\pi_{k-1}}\\
     & \vdots\ar[d]^{\pi_2} \\
     \mathbb A^{p_{1},q_{1}}\ar[r] & \XX((p_0,q_0), (p_{1},q_{1}))\ar[d]^{\pi_1}\\
     & \XX(p_0,q_0) }
\end{equation}

We note that in the case $(p_0, q_0) = (1,0)$ the base of the tower of fibrations is a point and the next space up $\XX((1,0)(p_1,q_1))$ is an affine space $\mathbb A^{p_1,q_1}$ equipped with an invariant affine pseudo-metric of signature $(p_1,q_1)$. In the case that $(p_1,q_1)$ is $(n,0)$ or $(0,n)$, then $\mathbb A^{p_1,q_1}$ identifies with the Euclidean space $\mathbb E^n$.

\begin{remark}
In the context of Theorem~\ref{thm:limits-Hpq}, the different levels of the tower of fibrations~(\ref{tower}) correspond to different rates of collapse of $\XX(p,q)$. The initial projection $\pi_k$ should be thought of as a collapse map which collapses the directions in $\XX(p,q)$ most distorted by the conjugation action. 
\end{remark}

\begin{remark}
\label{rem:double-cover}
It is sometimes easier to work in the double cover $\widetilde \XX(\pfbeta)$ of $\XX(\pfbeta)$, which is naturally described by the hyperboloid $\beta_0 = -1$. In this case, each projection map $\pi_j$ is just the restriction of the quotient map $V_0/V_{j+1} \to V_0/V_j$ to the hyperboloid $\beta_0 = -1$ in $V_0/V_{j+1}$. It is natural to call $\widetilde \XX(\pfbeta)$ the \emph{hyperboloid model}.
\end{remark}

\begin{remark}
The action of the unipotent part $U(\flag)$ preserves each affine fiber of each fibration in \eqref{tower}. The action on each fiber is simply a translation. However, the amount and direction of translation may vary from fiber to fiber (with respect to some chosen trivialization), so that the fibers appear to shear with respect to one another.
\end{remark}

\begin{figure}
{\centering

\def\svgwidth{3.0in}
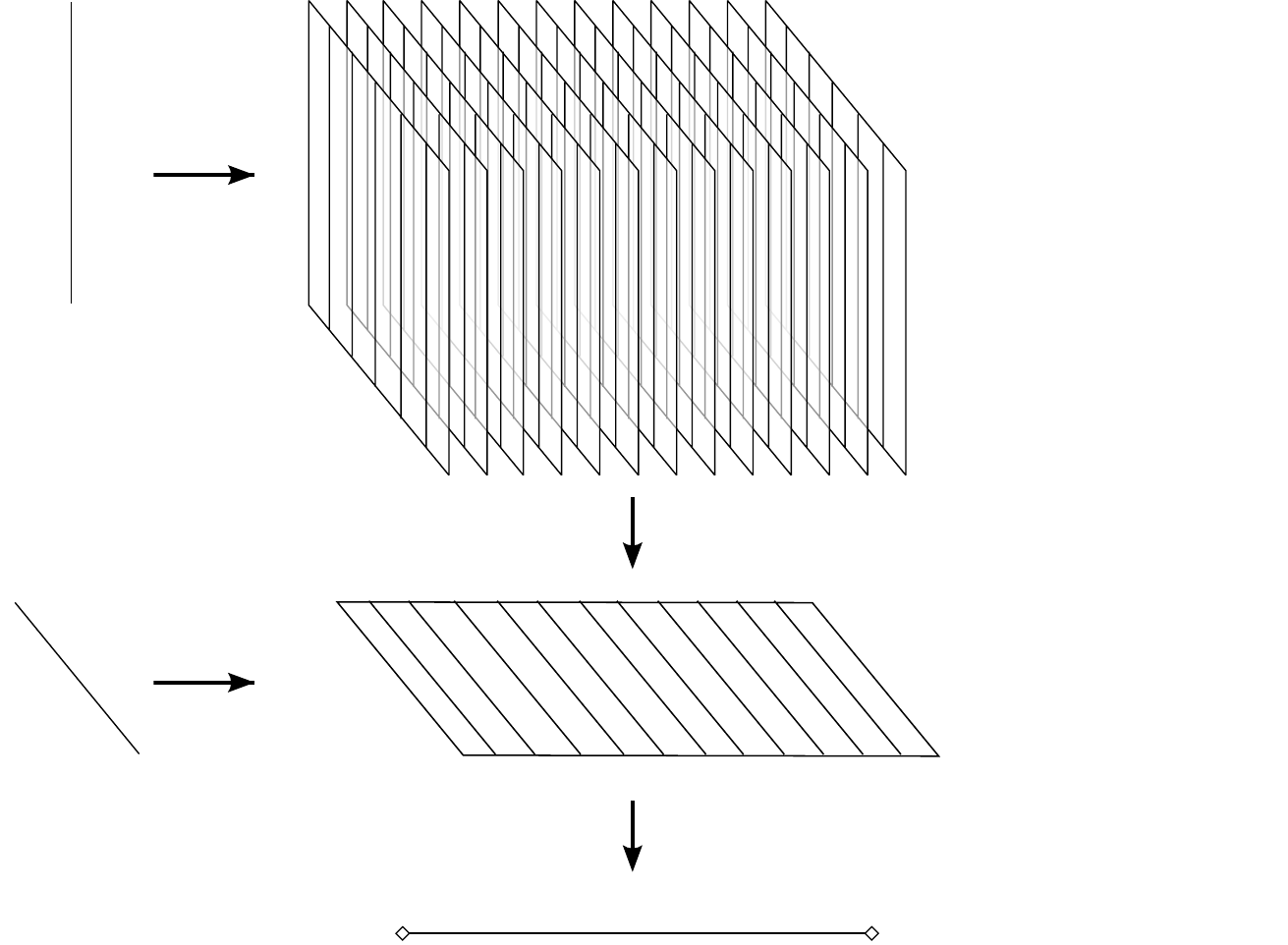

}
\caption{\label{fig:iterated-bundle} A picture of the iterated affine bundle structure of $X((1,1)(1)(1))$.}
\end{figure}

In the following sections we apply Theorem~\ref{thm:limits-Hpq} to several cases of interest, including the classical two-dimensional geometries,  and three-dimensional hyperbolic and AdS geometry. Along the way we will discuss further the geometry of those $(\XX(\pfbeta), \Isom(\pfbeta))$ which arise as limits in these cases.

\subsection{The classical two-dimensional geometries}

The two-dimensional Riemannian model geometries of constant curvature may each be realized as subgeometries of $(\RP^2, \PGL_3 \RR)$. In fact, each is defined by a partial flag of quadratic forms (Section~\ref{sec:pfqf}). We use the notation of Section~\ref{sec:Hpq}, and for brevity we will only refer to the space $X$ of the geometry $(X,G)$ when the group $G$ is clear from context:
\begin{itemize}
\item Spherical geometry is (the double cover of)  $\XX(3,0)$.
\item Hyperbolic geometry is $\XX(1,2)$.
\item Euclidean geometry is $\XX((1,0)(2))$
\end{itemize}

The following chart depicts all possible limits of geometries given by a partial flag of quadratic forms in dimension two. 
The completeness/accuracy of the chart is easy to verify using the calculus of Theorem~\ref{thm:limits-Xbeta}.
\begin{displaymath}
\xymatrix{ \XX(3,0) \ar[d] \ar[drr] & \XX(1,2) \ar[dl]\ar[d] & \XX(2,1)\ar[dl]\ar[d]\ar[dr] & \\ \XX((1,0)(2))\ar[dr] & \XX((1,1)(1))\ar[d] & \XX((2,0)(1))\ar[dl] & \XX((1,0)(1,1))\ar[dll] \\  & \XX((1,0)(1)(1)) & & }
\end{displaymath}

The limits of spherical, hyperbolic, and Euclidean geometry may be read off from the chart:
\begin{theorem}
The limits of spherical, hyperbolic, and Euclidean geometry, considered as sub-geometries of projective geometry are the following PFQF geometries:
\begin{itemize}
\item limits of spherical: $\XX(3,0)$ (no degeneration), $\XX((1,0)(2)) = $ Euclidean, and $\XX((1,0)(1)(1)$.
\item limits of hyperbolic: $\XX(1,2)$ (no degeneration), $\XX((1,0)(2)) = $ Euclidean, and $\XX((1,0)(1)(1))$.
\item limits of Eucldiean: $\XX((1,0)(2))$ (no degeneration), and $\XX((1,0)(1)(1))$.
\end{itemize}
\end{theorem}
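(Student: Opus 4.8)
The plan is to deduce the statement as a direct specialization of the classifications already in hand: Theorem~\ref{thm:limits-Hpq} handles the geometries of the form $\XX(p,q)$, while its generalization Theorem~\ref{thm:limits-Xbeta} handles limits of a partial-flag-of-quadratic-forms geometry. Everything takes place inside $(\RP^2,\PGL_3\RR)$, so every signature decomposition that appears is a decomposition of the three-dimensional space $\RR^3$, and the whole argument reduces to a finite enumeration.

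First I would fix the dictionary: spherical geometry is the double cover of $\XX(3,0)$, hyperbolic geometry is $\XX(1,2)$, and Euclidean geometry is $\XX((1,0)(2))$. Since spherical geometry only locally embeds in projective geometry---it is a double cover of the genuine subgeometry $\XX(3,0)$---I would interpret its limits as the limits of $\XX(3,0)$. This introduces no new phenomena, because a conjugacy limit is computed entirely at the level of the structure group inside $\PGL_3\RR$; a limit of the cover is the cover of a limit of $\XX(3,0)$. Thus it suffices to classify the limits of $\XX(3,0)$, $\XX(1,2)$, and $\XX((1,0)(2))$.

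Next, for spherical and hyperbolic geometry I would apply Theorem~\ref{thm:limits-Hpq} with $(p,q)=(3,0)$ and $(p,q)=(1,2)$. The theorem reduces each case to checking which ordered signatures $((p_0,q_0),\ldots,(p_k,q_k))$ partition $(p,q)$ subject to $p_0>0$, with the first block never reversed and the remaining blocks reversible. Because the ambient space is three-dimensional, the only ordered compositions of the dimension are $(3)$, $(1,2)$, $(2,1)$, and $(1,1,1)$, so only a short list of decorated signatures must be examined; running through them yields exactly the PFQF geometries appearing below $\XX(3,0)$, respectively $\XX(1,2)$, in the chart. For Euclidean geometry I would instead apply Theorem~\ref{thm:limits-Xbeta} to $\XX((1,0)(2))$: its limits refine both the two-step flag and the signature $((1,0),(2,0))$ while fixing the first signature $(1,0)$, and the only admissible refinement splits the $(2,0)$ block into $(1,0)(1,0)$, producing $\XX((1,0)(1)(1))$.

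Finally, to assemble the lists I would collect, for each of the three starting geometries, the set of PFQF geometries reachable by iterated one-step limits; transitivity of the limit relation (Theorem~\ref{thm:partial-order}) guarantees that this reachable set is the complete set of limits and that the chart is its Hasse diagram. The one piece of genuine bookkeeping is to verify that the enumerated geometries are pairwise non-conjugate, which follows from the conjugacy criterion recorded just before Theorem~\ref{thm:limits-Opq}---matching flags, with each signature determined only up to the swap $(p_i,q_i)\leftrightarrow(q_i,p_i)$---and to confirm that no partition is missed. I expect this completeness-and-non-redundancy check to be the only real obstacle, since the substantive classification is entirely supplied by Theorems~\ref{thm:limits-Hpq} and~\ref{thm:limits-Xbeta}; it is precisely here that one must take care neither to omit nor to double-count a case.
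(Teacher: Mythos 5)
Your strategy coincides with the paper's own: the paper's entire proof consists of the chart of two-dimensional PFQF limits, the remark that its completeness and accuracy ``is easy to verify using the calculus of Theorem~\ref{thm:limits-Xbeta}'', and the instruction to read the bullet lists off the chart. Your specialization of Theorem~\ref{thm:limits-Hpq} to $(p,q)=(3,0)$ and $(1,2)$, of Theorem~\ref{thm:limits-Xbeta} to $\XX((1,0)(2))$, the enumeration of the four ordered compositions of $3$, and the non-conjugacy check via the criterion recorded before Theorem~\ref{thm:limits-Opq} is precisely that verification made explicit. (One small logical point: transitivity via Theorem~\ref{thm:partial-order} only shows that iterated limits are limits; completeness of your reachable set comes from the direct classifications of Theorems~\ref{thm:limits-Hpq} and~\ref{thm:limits-Xbeta}, which already list \emph{all} limits in one step, not from transitivity.)

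However, your final assembly step has a genuine gap: the enumeration you describe does not terminate in the bullet lists as stated, and you never record what it actually produces. Running Theorem~\ref{thm:limits-Hpq} for $(3,0)$, the composition $(2,1)$ admits the signature $((2,0),(1,0))$, which has $p_0=2\neq 0$ and partitions $(3,0)$; hence $\XX((2,0)(1))$ is a limit of spherical geometry (concretely, conjugate $\PO(3)$ by $\operatorname{diag}(1,1,t)$ and let $t\to\infty$). For $(1,2)$, the signature $((1,1),(1,0))$ partitions $(1,2)$ after reversing only the second block, so two-dimensional half-pipe $\XX((1,1)(1))$ is a limit of hyperbolic geometry --- the planar analogue of $\XX((1,2)(1))$ in the three-dimensional list~(\ref{list-H3}). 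Both of these appear in the paper's own chart (the arrows $\XX(3,0)\to\XX((2,0)(1))$ and $\XX(1,2)\to\XX((1,1)(1))$), and by the conjugacy criterion neither is conjugate to anything in the stated lists. So the reachable sets are $\{\XX(3,0),\ \XX((1,0)(2)),\ \XX((2,0)(1)),\ \XX((1,0)(1)(1))\}$ and $\{\XX(1,2),\ \XX((1,0)(2)),\ \XX((1,1)(1)),\ \XX((1,0)(1)(1))\}$, each with four elements, not three; only the Euclidean bullet is complete as printed. Your claim that the enumeration ``yields exactly the geometries appearing below $\XX(3,0)$, respectively $\XX(1,2)$, in the chart'' is correct, but it therefore contradicts rather than establishes the statement: carried out honestly, your argument proves the corrected four-element lists, and you must either flag that the theorem's first two bullets omit $\XX((2,0)(1))$ and $\XX((1,1)(1))$, or produce an exclusion argument --- which cannot exist, since the paper's Theorem~\ref{thm:limits-Hpq} certifies both as limits.
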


We give a brief description of the most degenerate two-dimensional partial flag of quadratic forms geometry $\XX((1,0)(1)(1))$ which is a limit of all three classical two-dimensional geometries.
First, the group $$\OO((1,0)(1)(1)) = \begin{pmatrix} \pm 1 & 0 & 0\\ * & \pm 1 & 0 \\ * & * & \pm 1 \end{pmatrix}$$
preserves a full flag $\RR^3 = V_0 \supset V_1 \supset V_2 \supset V_3 = \{0\}$  in $\RR^{3}$. In this case, the space $\XX((1,0)(1)(1)) \cong \mathbb A^2$ is an affine plane, though note that $\OO((1,0)(1)(2))$ is not the full group of affine transformations. The iterated affine bundle structure is:

\begin{displaymath}
    \xymatrix{    \mathbb A^{1,0} \ar[r] & \XX((1,0)(1)(1)) \cong \mathbb A^2 \ar[d]^{\pi_2} \\
              \mathbb A^{1,0} \ar[r] & \XX((1,0)(1)) = \mathbb A^1\ar[d]^{\pi_1} \\
                 & \XX(1,0) = \{pt\} }
\end{displaymath}
Hence $\XX((1,0)(1)(1))$ is an affine two-space, equipped with a translation invariant fibration in Euclidean lines $\mathbb A^{1,0}$ over a base which is also a Euclidean line $\mathbb A^{1,0}$. The group $\OO((1,0)(1)(1))$ is the group of affine transformations which preserve the fibration as well as the metric on the fibers and the base. In the standard basis, $\XX((1,0)(1)(1)) = \{ x_1 \neq 0\}/\RR^*$, which we identify with the affine plane $x_1 = 1$ in $V_0 = \RR^3$. The lines of the foliation are the lines of constant $x_2$ and the (square of the) affine metric on these lines is given by:
$$\varrho^2_2\left(\begin{pmatrix} 1 \\ x_2 \\ x_3 \end{pmatrix} , \begin{pmatrix} 1 \\ x_2 \\ x_3' \end{pmatrix} \right) = (x_3 - x_3')^2.$$

\subsection{Limits of three-dimensional hyperbolic geometry}

 Theorem~\ref{thm:limits-Hpq} gives the limits of three-dimensional hyperbolic geometry $\HH^3 = \XX(1,3)$ as a subgeometry of projective geometry. The results are summarized in the following diagram:
 \begin{equation}
\label{list-H3}
\xymatrix{ & \XX(1,3)\ar[dl]\ar[d]\ar[dr] & \\ \XX((1,0)(3))\ar[d]\ar[dr] & \XX((1,2)(1))\ar[d]\ar[dr] & \XX((1,1)(2))\ar[dll]\ar[d] \\ \XX((1,0)(1)(2))\ar[dr] & \XX((1,0)(2)(1))\ar[d] & \XX((1,1)(1)(1))\ar[dl]  \\ & \XX((1,0)(1)(1)(1)) &}
\end{equation}

We now describe some of the geometries appearing in this list, and their relationships to the Thurston geometries.

\vskip 0.2cm

The geometry $\XX((1,0)(3))$ is Euclidean geometry. 

\vskip 0.2cm

The geometry $\XX((1,2)(1))$ is \emph{half-pipe} geometry, defined by Danciger in \cite{danciger1} and used to construct examples of geometric structures (cone-manifolds) transitioning from hyperbolic geometry to anti de Sitter (AdS) geometry. That $\XX((1,2)(1))$ is a limit of three dimensional AdS geometry also follows from Theorem~\ref{thm:limits-Hpq}, since the projective model for $\operatorname{AdS}^3$ is, in our terminology, $\XX(2,2)$.

\vskip 0.2cm
Next, consider the geometry $\XX((1,0)(2)(1))$, which is also a limit of spherical geometry. The iterated affine bundle structure is:
\begin{displaymath}
    \xymatrix{    \mathbb E^{1} \ar[r] & \XX((1,0)(2)(1)) \cong \mathbb A^3 \ar[d]^{\pi_2} \\
              \mathbb E^{2} \ar[r] & \XX((1,0)(2)) \cong \mathbb E^{2} \ar[d]^{\pi_1} \\
                 & \XX(1,0) = \{*\} }
\end{displaymath}
Hence, $\XX((1,0)(2)(1)$ fibers in Euclidean lines over the Euclidean plane. 
Let $(1\ x\ y\ z)^T$ be coordinates for $\XX((1,0)(2)(1)) = \mathbb A^3$, let $(1\ x'\ y')^T$ be coordinates for $\XX((1,0)(2)) = \mathbb E^2$, and let the projection $\pi_2$ be given by $x' = x, y' = y$.
Then, consider the contact form $\alpha$ on $\mathbb A^3$ defined by 
$$\alpha = dz +x dy - y dx.$$
Note that $d \alpha = 2 \ \pi_2^* dA$, where $dA$ is the area form on the Euclidean plane $\mathbb E^{2}$. Consider the following Riemannian metric $g_{\mathrm{Nil}}$ on $\XX((1,0)(2)(1))$: The fibers of $\pi_2$ are defined to be $g_{\mathrm{Nil}}$ orthogonal to $\ker \alpha$, and $g_{\mathrm{Nil}}$ is defined to be the pull-back by $\pi_2$ of the Euclidean metric on $\ker \alpha$, while $g _{\mathrm{Nil}}(X,X) = \alpha(X)^2$ for $X$ tangent to the $\pi_2$ fiber direction. The Riemannian metric $g_{\mathrm{Nil}}$ makes $\XX((1,0)(2)(1))$ into the model space for \emph{Nil geometry}. Of course, $\OO((1,0)(2)(1))$ does not preserve $\alpha$, nor the metric $g_{\mathrm{Nil}}$. However one may check that the isometries $\Isom(g_{\mathrm{Nil}})$ are a proper subgroup (up to finite index) of $\OO((1,0)(2)(1))$, so Nil geometry locally embeds into  $\XX((1,0)(2)(1))$. 
In coordinates,
$$\Isom_0(g_{\mathrm{Nil}}) = \begin{pmatrix} \pm 1 &  & \\  &  {\OO(2)} & \\  &  & \pm 1\end{pmatrix} \ltimes \begin{pmatrix} 1 & & & \\ a & 1 & & \\ b & 0 & 1 & \\ c & b & -a & 1\end{pmatrix} \subset \OO((1,0)(2)(1)),$$
where $a,b,c \in \RR$ are arbitrary numbers. 
That Nil geometry appears, in this context, as a (sub-geometry of a) limit of hyperbolic geometry is not surprising. Porti \cite{Porti-02} proved that a Nil orbifold with ramification locus transverse to the $\pi_2$ fibration is (metrically) the limit of collapsing hyperbolic cone-manifold structures (after appropriate modification of the collapsing metric).

\vskip 0.2cm
Next consider the geometry $\XX((1,1)(2))$. The iterated affine bundle structure is just one bundle:
\begin{displaymath}
    \xymatrix{    \mathbb E^{2} \ar[r] & \XX((1,1)(2)) \ar[d]^{\pi_1} \\
               & \XX((1,1)) \cong \mathbb H^{1}.  }
\end{displaymath}
In a basis that respects the partial flag, the structure group has the form 
$$\OO((1,1)(2)) = \begin{pmatrix} \OO(1,1)  & \\  &  \OO(2) \end{pmatrix} \ltimes \begin{pmatrix} I_2 & \\ \begin{matrix} * & *\\ * & * \end{matrix} & I_2 \end{pmatrix},$$
where $I_2$ is the $2 \times 2$ identity matrix. In fact, there is a copy of the group $\mathrm{Sol} \cong \SO(1,1) \ltimes \RR^2$ inside of $\OO((1,1)(2))$, which is described in coordinates as follows:
$$\mathrm{Sol} = \begin{pmatrix} \begin{matrix} \cosh z & \sinh z\\ \sinh z & \cosh z \end{matrix}  & \\  &  I_2 \end{pmatrix} \ltimes \begin{pmatrix} I_2 & \\ \begin{matrix} x & x\\ y & -y \end{matrix} & I_2 \end{pmatrix}.$$
In fact, $\mathrm{Sol}$ acts simply transitively on $\XX((1,1)(2))$.  Hence $\XX((1,1)(2))$ is a model for Sol geometry. Four of the eight components of $\Isom \mathrm{Sol}$ lie inside $\OO((1,1)(2))$, corresponding to multiplying the diagonal blocks of $\mathrm{Sol}$ by $\pm 1$. The missing four components are achieved by adding the block diagonal matrix $\operatorname{diag}\left( \begin{pmatrix} 0 & 1\\ 1 & 0 \end{pmatrix}, I_2 \right)$. We may also embed $\mathrm{Sol}$ geometry, in the exact same way, as a sub-geometry of $\XX((1,1)(1)(1))$ (see Figure~\ref{fig:iterated-bundle} for an illustration of the iterated affine bundle structure of $\XX((1,1)(1)(1))$). Let $M$ be a torus bundle over the circle with Anosov mondromy.  In~\cite{HPS}, Huesener--Porti--Su\'arez showed that the natural Sol geometry structure on $M$ is realized as a limit of hyperbolic cone manifold structures, in the sense that the hyperbolic metrics converge to the Sol metric after appropriate (non-isotropic) modification.  It is possible to recast their construction in the context of projective geometry using the theory developed here. Recently, Kozai \cite{Kozai-13} used this projective geometry approach to generalize the work of Huesener--Porti--Su\'arez to the setting of three-manifolds $M$ which fiber as a surface bundle over the circle. Kozai shows (under some assumptions) that the natural singular Sol structure on $M$ may be deformed to nearby singular hyperbolic structures by first deforming from Sol to half-pipe geometry $\XX((1,2)(1))$ and then from half-pipe to hyperbolic geometry.

\subsection{Thurston geometries as limits of hyperbolic geometry}

All eight Thurston geometries locally embed as sub-geometries of real projective geometry (in fact, each embeds up to finite index and coverings). We have now demonstrated that Euclidean geometry is a limit and both $\mathrm{Nil}$ geometry and $\mathrm{Sol}$ geometry locally embed in limits of hyperbolic geometry. We now prove Theorem~\ref{cor:Thurston-geoms_intro}, which says that these are the only Thurston geometries that appear in this way.

\begin{proof}[Proof of Theorem~\ref{cor:Thurston-geoms_intro}]
We show that the projective geometry realizations of the four remaining Thurston geometries, which are $\mathbb S^3$ , $\HH^2 \times \RR$, $\widetilde{\SL_2\RR}$, and $\mathbb S^2 \times \RR$, do not locally the embed in any of the geometries listed in~(\ref{list-H3}). 

Consider spherical geometry $\mathbb S^3$. Up to conjugacy, the local embedding of spherical geometry of projective geometry is unique. It is clear that the structure group $\OO(4)$ does not locally embed as a subgroup in any of the partial flag isometry groups for the geometries appearing in~(\ref{list-H3}).

Next, consider $\mathbb S^2 \times \RR$. The isometry group is (up to finite index) a product $\SO(3) \times \RR$. The only geometry in the list ~(\ref{list-H3}) whose structure group contains a subgroup locally isomorphic to $\SO(3)$ is Euclidean geometry $\XX((1,0)(3))$. Of course $\mathbb S^2 \times \RR$ does not locally embed in Euclidean geometry.

The geometry $\widetilde{\SL_2 \RR}$ is locally isomorphic to (in fact an infinite cyclic cover of) the following subgeometry of projective geometry. The space $\PSL(2,\RR)$ embeds in $\RP^3$ by considering the entries of a $2 \times 2$ matrix as coordinates, and the identity component of the isometry group is given by the linear action of $\PSL_2 \RR \times \PSO(2)$ where the $\PSL_2 \RR$ factor acts on the left and the $\PSO(2)$ factor acts on the right. We show that even the stiffening of this geometry obtained by restricting the structure group to the subgroup $\PSL(2,\RR)$ (acting on the left) does not locally embed in any limit of hyperbolic geometry. For, the only geometry appearing in~(\ref{list-H3}) whose isometry group contains a subgroup locally isomorphic to $\PSL(2,\RR) \cong \SO_0(2,1)$ is half-pipe geometry $\XX((1,2)(1))$. Any such subgroup is conjugate into the block diagonal subgroup $\operatorname{diag}(\OO(2,1), 1)$ and it is then easy to see that such a subgroup does not act transitively on $\XX((1,2)(1))$, but rather preserves a totally geodesic subspace (a copy of the hyperbolic plane, see \cite{danciger1}). Therefore, since the left action of $\PSL_2R$ is transitive, we have shown that $\widetilde{\SL_2 \RR}$ does not locally embed in half-pipe geometry.

Finally, consider $\HH^2 \times \RR$ geometry. The isometry group is (up to finite index) the product $\SO(2,1) \times \RR$. Again, the only limit of hyperbolic geometry whose isometry group contains a subgroup $H$ locally isomorphic to $\SO(2,1)$ is $\XX((1,2)(1))$, half-pipe geometry. However, the centralizer of the smallest such subgroup $H = \SO_0(2,1) \times \{1\} $ in $\OO((1,2)(1))$ is $\operatorname{diag}(\pm I_3, \pm 1)$, so in particular there is no subgroup locally isomorphic to $\SO(2,1) \times \RR$ inside of $\OO((1,2)(1))$.

\end{proof}

\small
\bibliography{refs} 
\bibliographystyle{abbrv} 

\end{document}